\documentclass[12pt]{amsart}
\makeatletter

\@addtoreset{equation}{section}
\makeatother
\usepackage[margin=3cm]{geometry}
\pagestyle{plain}
\newenvironment{nouppercase}{%
  \renewcommand{\uppercasenonmath}[1]{}}{}
\usepackage{amsmath,amssymb,amsthm,color,shuffle}
\usepackage{mathrsfs} 
\usepackage[T1]{fontenc}
\usepackage{ytableau}
\ytableausetup{boxsize=normal,aligntableaux=center}  
\usepackage[all]{xy}
\allowdisplaybreaks[4]

\setlength{\topmargin}{-10mm}
\setlength{\oddsidemargin}{0mm}
\setlength{\evensidemargin}{0mm}
\setlength{\textheight}{240mm}
\setlength{\textwidth}{170mm}

\theoremstyle{definition}
\newtheorem{theorem}{Theorem}[section]
\newtheorem{prop}[theorem]{Proposition}
 
\newtheorem*{conj*}{Conjecture}
\newtheorem*{theorem*}{Theorem}
\newtheorem{remark}[theorem]{Remark} 
\newtheorem{cor}[theorem]{Corollary}

\newtheorem{lemma}[theorem]{Lemma}
\newtheorem{example}[theorem]{Example}

\usepackage{blkarray}

\definecolor{gray}{RGB}{230,230,230}  

\DeclareMathOperator{\add}{add}
\DeclareMathOperator{\rem}{rem}
\DeclareMathOperator{\diag}{diag}

\DeclareMathOperator{\T}{T}
\DeclareMathOperator{\SSYT}{SSYT}

\DeclareMathOperator{\IB}{IB}
\DeclareMathOperator{\OB}{OB}
 
\newcommand{\ba}{\boldsymbol a}
\newcommand{\bk}{\boldsymbol k}
\newcommand{\bs}{\boldsymbol s}

\newcommand{\bu}{\boldsymbol u}
\newcommand{\bv}{\boldsymbol v}

\newcommand{\ol}[1]{\overline{#1}}
\newcommand{\mb}[1]{\raisebox{-2pt}{$\ol{#1}$}}

\newcommand{\FK}{\mathrm{FK}}

\newcommand{\tcdots}{\text{\tiny \hspace{1.5pt}$\cdots$}}
\newcommand{\tvdots}{\text{\tiny \raisebox{-1.5pt}{$\vdots$}}}
\newcommand{\tddots}{\text{\tiny \raisebox{-1.5pt}{$\ddots$}}}

\usepackage{tikz}
\usetikzlibrary{calc}
\usetikzlibrary{decorations.pathreplacing}

\tikzset{vertex_black/.style={
circle, draw, black, fill=black,
inner sep=0pt, minimum width=4pt, label distance=1mm }}
\tikzset{vertex_white/.style={
circle, draw, black, fill=white,
inner sep=0pt, minimum width=4pt, label distance=1mm }}
\tikzset{vertex_dots/.style={yshift=0,inner sep=2,minimum width=0pt}}

\begin{document}

\title{
Quadratic relations for ninth variations of Schur functions and \\ application to Schur multiple zeta functions
}
\author[W. Takeda]{Wataru Takeda}
\address{Wataru Takeda\\ Department of Mathematics, Toho University,
2-2-1, Miyama, Funabashi-shi, Chiba 274-8510, Japan.}
\email{wataru.takeda@sci.toho-u.ac.jp}
\author[Y. Yamasaki]{Yoshinori Yamasaki}
\address{Graduate School of Science and Engineering, Ehime University, 2-5, Bunkyo-cho, Matsuyama 790-8577, Japan}
\email{yamasaki@math.sci.ehime-u.ac.jp}
\subjclass[2020]{
05E05, 
11M32. 
}
\keywords{Ninth variation of Schur functions, 
Quadratic relations,
Desnanot–Jacobi adjoint matrix theorem,
Pl\"{u}cker relations,  
Schur multiple zeta functions}

\begin{nouppercase}
\maketitle
\end{nouppercase}
\begin{abstract}
Macdonald's ninth variation of Schur functions is a broad generalization of the classical Schur function and its variants, defined via the Jacobi–Trudi determinant formula.
In this paper, we establish various algebraic relations for $S^{(r)}_{\lambda/\mu}(X)$, a class of the ninth variation introduced by Nakagawa, Noumi, Shirakawa, and Yamada, by combining the Jacobi–Trudi formula with determinant formulas such as the Desnanot–Jacobi adjoint matrix theorem and the Pl\"ucker relations, which generalize the corresponding relations for Schur functions.
As an application, we investigate algebraic relations for "diagonally constant" Schur multiple zeta functions and examine their specific special values when the shape is rectangular.
\end{abstract}


\section{Introduction}

Schur functions $s_{\lambda}$, along with their skew generalization $s_{\lambda/\mu}$ called skew Schur functions, 
are an important class of symmetric functions.
In 1992, Macdonald \cite{ma92} introduced the so-called ninth variation of Schur functions, 
which includes many variants of $s_{\lambda/\mu}$ such as the factorial and flagged Schur functions,
defined as those satisfying the Jacobi-Trudi formula. 
About a decade later, Nakagawa, Noumi, Shirakawa, and Yamada \cite{nnsy}
studied a class of the ninth variation $S^{(r)}_{\lambda/\mu}(X)$ of Schur functions,
defined for a matrix of variables $X$ via its Gauss decomposition.
They established several determinant formulas and tableau expressions (in some special cases of $X$) for $S^{(r)}_{\lambda/\mu}(X)$,
using properties of minor determinants.
Recently, Foley and King \cite{fk21} introduced another type of the ninth variation of Schur functions, denoted by $S^{\FK}_{\lambda/\mu}(W)$, defined via a tableau expression, and derived the Hamel–Goulden formula \cite{hg}, which gives an extensive generalization of the Jacobi–Trudi formula.
We remark that, as will be seen in Lemma~\ref{lemma:nnsy to fk}, $S^{\FK}_{\lambda/\mu}(W)$ can be obtained as a specialization of $S^{(r)}_{\lambda/\mu}(X)$. 

It is known that Schur functions satisfy various algebraic relations.
For example, by combining the Jacobi–Trudi formula with the Pl\"ucker relations for products of determinants, Kleber \cite{k01} derived a quadratic relation for $s_{\lambda}$, described combinatorially in terms of the Young diagram (see Theorem~\ref{thm:kPlucker} for details).
The aim of the present paper is to establish algebraic (mainly quadratic) relations for $S^{(r)}_{\lambda/\mu}(X)$, including a generalization of Kleber’s formula.
As a direct consequence, we obtain the corresponding algebraic relations for $S^{\FK}_{\lambda/\mu}(W)$ via the specialization mentioned above.
Moreover, we apply these results to the Schur multiple zeta functions $\zeta_{\lambda/\mu}(\boldsymbol{s})$ introduced in \cite{npy}, which provide a combinatorial generalization of both multiple zeta and multiple zeta-star functions, particularly when the variable tableau $\boldsymbol{s}$ is diagonally constant.
Actually, these further allow us to derive explicit expressions or generating functions for certain families of Schur multiple zeta values of rectangular shape.



The organization of the paper is as follows.  
In Section~\ref{sec:nnsy9th}, we first review the definition of $S^{(r)}_{\lambda/\mu}(X)$,  
and then prove that it satisfies the Giambelli formula (Theorem~\ref{thm:Giambelli nnsy}),  
which gives a skew generalization of the formula for $S^{(r)}_{\lambda}(X)$ obtained in \cite{nnsy}. Next, we recall the definition of $S^{\FK}_{\lambda/\mu}(W)$ 
and show that it can be obtained as a specialization of $S^{(r)}_{\lambda/\mu}(X)$.
Sections~\ref{djformulasection} and~\ref{pformula} are devoted to investigating several algebraic relations for $S^{(r)}_{\lambda/\mu}(X)$, derived from the Desnanot–Jacobi adjoint matrix theorem and the Pl\"ucker relations, respectively.
In particular, by introducing the adding and removing operators for Young diagrams, we generalize Kleber’s quadratic relation for $s_{\lambda}$ \cite{k01} to $S^{(r)}_{\lambda}(X)$ (Theorem~\ref{thm:9thPlucker}).
Finally, in Section \ref{application}, as an application or related topic of the results obtained in the previous sections, we study relations for the "diagonally constant" Schur multiple zeta functions $\zeta_{\lambda/\mu}(\boldsymbol{s})$. In particular, we focus on the case where $\lambda/\mu$ is a rectangular shape and the variable tableau $\boldsymbol{s}$ is filled with at most three integers, noting that in non-admissible cases, we consider the regularization studied in \cite{bc}.
For example, we express the generating function for such values via the generating function of the corresponding (regularized) multiple zeta values.


\section{Ninth variations of skew Schur functions}
\label{sec:nnsy9th}

In this section, we recall the definitions of the ninth variations of skew Schur functions introduced in \cite{nnsy} and \cite{fk21}, respectively.
Moreover, for the former, we derive a Giambelli formula that extends the result of \cite{nnsy} from the non-skew case.

\subsection{Notations and Terminologies}

We first summarize the notations and terminologies that are used throughout the present paper.
A partition is a non-increasing sequence $\lambda=(\lambda_1,\ldots,\lambda_k)$ of positive integers.
The length and weight of $\lambda$ are denoted by  $\ell(\lambda):=k$ and $|\lambda|:=\lambda_1+\cdots+\lambda_k$, respectively. 
We sometimes write $\lambda=(\lambda_1,\lambda_2,\ldots)$ with the understanding that $\lambda_i=0$ for $i>k$, and also $\lambda=(1^{m_1(\lambda)}2^{m_2(\lambda)}\cdots)$,
where $m_i(\lambda)$ is the multiplicity of $i$ in $\lambda$.
The Young diagram associated with $\lambda$ is defined by 
$D({\lambda}):=\{(i,j)\in {\mathbb{Z}}^2\,|\,1\leq i\leq k, 1\leq j\leq \lambda_i\}$, which is depicted as a collection of square boxes with the $i$-th row having $\lambda_i$ boxes.
The conjugate partition of $\lambda$ is denoted by 
$\lambda'=(\lambda'_1,\lambda'_2,\ldots,\lambda'_{k'})$,
where $\lambda'_i=\#\{j\,|\,\lambda_j\ge i\}$.
A skew partition $\lambda/\mu$ is a pair of partitions 
$\lambda=(\lambda_1,\ldots,\lambda_k)$ and 
$\mu=(\mu_1,\ldots,\mu_l)$ satisfying $\lambda\supset\mu$, that is, $k\ge l$ and $\lambda_i\geq \mu_i$ for all $i$. 
When $\mu$ is the empty partition $\varnothing$, 
we identify $\lambda/\mu$ with $\lambda$. 
We also associate $\lambda/\mu$ with the skew Young diagram $D({\lambda/\mu}):=D(\lambda)\setminus D(\mu)$.
We say that $(i,j)\in D({\lambda/\mu})$ is a corner of $\lambda/\mu$ if $(i+1, j)\notin D({\lambda/\mu})$ and $(i, j+1)\notin D({\lambda/\mu})$, 
and denote by $C({\lambda/\mu})$ the set of all corners of $\lambda/\mu$.
A Young tableau of shape $\lambda/\mu$ over a set $S$ is a filling $T=(t_{i,j})_{(i,j)\in D(\lambda/\mu)}$ of boxes of $D(\lambda/\mu)$ with $t_{i,j}\in S$.
We denote by $\T(\lambda/\mu,S)$ the set of all  Young tableaux of shape $\lambda/\mu$ over $S$. 
Finally, for a positive integer $n$, we put $[n]:=\{1,2,\ldots,n\}$.

\subsection{Ninth variations of skew Schur functions defined in \cite{nnsy}}

From now on, we always assume that a partition $\lambda$ is contained in the rectangle $(s^r)$ for some non-negative integers
$r$ and $s$. Put $N=r+s$.

Let $X=[x_{i,j}]_{1\le i,j\le N}$ be an $N$-by-$N$ matrix, where each $(i,j)$-entry $(X)_{i,j}=x_{i,j}$ of $X$ is assumed to be indeterminate.
For sequences of row indices $A=(a_1,\ldots,a_r)$ and column indices $B=(b_1,\ldots,b_r)$, 
let $X^{A}_{B}=X^{a_1,\ldots,a_r}_{b_1,\ldots,b_r}:=[x_{a_i,b_j}]_{1\le i,j\le r}$
be the $r$-by-$r$ submatrix of $X$ 
corresponding to $A$ and $B$
and $\xi^{A}_{B}(X)=\xi^{a_1,\ldots,a_r}_{b_1,\ldots,b_r}(X):=\det X^{A}_{B}$.
Moreover, for subsets $I,J\subset [N]$ with $|I|=|J|=r$, 
we put $X^{I}_J:=X^{i_1,\ldots,i_r}_{j_1,\ldots,j_r}$
and $\xi^{I}_{J}(X):=\xi^{i_1,\ldots,i_r}_{j_1,\ldots,j_r}(X)$
where $i_1<\cdots <i_r$ and $j_1<\cdots<j_r$ are the sequences obtained by arranging the elements of $I$ and $J$ in increasing order, respectively.
To calculate minors, the following formulas are useful:
\begin{align}
\label{for:xi formula}
 \xi^{I}_{J}(XY)
&=\sum_{\substack{K\subset [N] \\ |K|=r}}\xi^{I}_{K}(X)\xi^{K}_{J}(Y),\\
\label{for:Jacobi complement}
 \xi^{I}_{J}(X)
&=(-1)^{\sum_{i\in I} i+\sum_{j\in J} j}\cdot \det X\cdot \xi^{J^c}_{I^c}(X^{-1}),
\end{align}
where $I^c=[N]\setminus I$ and $J^c=[N]\setminus J$, respectively.
The second one is called Jacobi's complementary minor formula.
Write the Gauss decomposition of $X$ as 
\[
 X=X_{-}X_{0}X_{+},
\]
where $X_{-}$, $X_{0}$ and $X_{+}$ are lower unitriangular,
diagonal and upper unitriangular matrices, respectively,
which are uniquely determined as matrices with entries in 
$\mathbb{C}(X)$, the field of rational functions over $\mathbb{C}$ in the variable $x_{i,j}$ for $1\le i,j\le N$. 
Actually, one sees that 
\begin{align}
\label{for:X-}
 (X_{-})_{i,j} 
&=\frac{\xi^{1,\ldots,j-1,i}_{1,\ldots,j-1,j}(X)}{\xi^{1,\ldots,j}_{1,\ldots,j}(X)} & & \hspace{-50pt} (i\ge j),\\
\label{for:X0}
(X_{0})_{i,i} 
&=\frac{\xi^{1,\ldots,i}_{1,\ldots,i}(X)}{\xi^{1,\ldots,i-1}_{1,\ldots,i-1}(X)} 
& & \hspace{-50pt} (1\le i\le N),\\
\label{for:X+}
(X_{+})_{i,j} 
&=\frac{\xi^{1,\ldots,i-1,i}_{1,\ldots,i-1,j}(X)}{\xi^{1,\ldots,i}_{1,\ldots,i}(X)} & & \hspace{-50pt} (i\le j).
\end{align}
Notice that the entries of the inverse matrix $X^{-1}_{+}$ of $X_{+}$ is similarly given by 
\begin{align}
\label{for:X+inverse}
 (X^{-1}_{+})_{i,j} 
&=(-1)^{j-i}\frac{\xi^{1,\ldots,j-1}_{1,\ldots,\widehat{i},\ldots,j}(X)}{\xi^{1,\ldots,j-1}_{1,\ldots,j-1}(X)} \qquad  (i\le j),
\end{align}
where $\widehat{i}$ means that we ignore $i$.
In \cite{nnsy}, the ninth variation of skew Schur function
$S^{(r)}_{\lambda/\mu}(X)$ is defined by 
\begin{align}
\label{def:9thSchur-nnsy}
 S^{(r)}_{\lambda/\mu}(X)
:=\xi^{I}_{J}(X_{+})
=(-1)^{|\lambda/\mu|}\xi^{J^c}_{I^c}(X^{-1}_{+})\in \mathbb{C}(X),
\end{align}
where $I=\{i_1,\ldots,i_r\}\subset [N]$ 
with $i_a=\mu_{r+1-a}+a$ 
and $J=\{j_1,\ldots,j_r\}\subset [N]$ 
with $j_a=\lambda_{r+1-a}+a$ 
are the Maya diagrams of $\mu$ and $\lambda$, respectively.
Notice that the second equality in \eqref{def:9thSchur-nnsy} follows from \eqref{for:Jacobi complement},
and $I^{c}=\{k_1,\ldots,k_s\}\subset [N]$ with $k_a=r+a-\mu'_a$
and $J^c=\{l_1,\ldots,l_s\}\subset [N]$ with 
$l_a=r+a-\lambda'_a$.
As the special case $\mu=\varnothing$,
using \eqref{for:xi formula} 
with \eqref{for:X-}, \eqref{for:X0} and \eqref{for:X+}, we have   
\begin{align}
\label{for:Weyl formula}
 S^{(r)}_{\lambda}(X)
=\frac{\xi^{1,\ldots,r}_{j_1,\ldots,j_r}(X)}{\xi^{1,\ldots,r}_{1,\ldots,r}(X)}.
\end{align}
This is a kind of the classical Weyl formula for Schur function.
We remark that 
$S^{(r)}_{\lambda/\mu}(X)$ gives
the classical skew Schur function $s_{\lambda/\mu}(x_1,\ldots,x_n)$ of $n$ variables 
when $X$ is the Vandermonde matrix 
$X=\left[x^{j-1}_{i}\right]_{1\le i,j\le N}$ 
with variables $\{x_i\}_{i\in [N]}$,
under the specialization $x_{n+1}=\cdots=x_{N}=0$ when $r$ and $s$ are sufficiently large.
In the following, for simplicity, we graphically express $S^{(r+m)}_{\lambda/\mu}(X)$ for $m\in\mathbb{Z}$ 
by using the Young tableau 
$
\left(\,\overline{m+c(i,j)}\,\right)_{(i,j)\in D(\lambda/\mu)}
$
of shape $\lambda/\mu$.
Here, $c(i,j):=j-i$ is the content of $(i,j)\in D(\lambda/\mu)$, and, for $n\in\mathbb{Z}$, $\overline{n}=n$ if $n\ge 0$ and $-|n|$ otherwise.
For example, 
\[
S^{(r+1)}_{(3,3,2,1)/(1,1)}(X)
=
\ 
\ytableausetup{boxsize=12pt,aligntableaux=center}
\scriptsize
\begin{ytableau}
\none &2&3\\
\none &1&2\\
\mb{1}&0\\
\mb{2}
\end{ytableau},
\qquad 
S^{(r-2)}_{(4,3,3,2)/(2,1,1)}(X)
=
\ 
\begin{ytableau}
\none & \none &0&1\\
\none &\mb{2}&\mb{1}\\
\none &\mb{3}&\mb{2} \\
\mb{5}&\mb{4}
\end{ytableau}.
\]

As special cases, we put 
\begin{align}
\label{def:h}
 h^{(r)}_{d}(X)
&:=S^{(r)}_{(d)}(X)
=\xi^{1,\ldots,r}_{1,\ldots,r-1,r+d}(X_{+}) \qquad (0\le d\le s),\\
\label{def:e}
e^{(r)}_{d}(X)
&:=S^{(r)}_{(1^d)}(X)
=\xi^{1,\ldots,r}_{1,\ldots,\widehat{r-d+1},\ldots,r+1}(X_{+}) \qquad (0\le d\le r),
\end{align}
and $h^{(r)}_{d}(X)=e^{(r)}_{d}(X)=0$ if $d<0$.
Notice from \eqref{for:X+}, \eqref{for:X+inverse} and 
\eqref{for:Weyl formula} that 
\begin{align}
\label{for:Xij_X-ij}
 (X_{+})_{i,j} 
=h^{(i)}_{j-i}(X),\qquad 
 (X^{-1}_{+})_{i,j} 
=(-1)^{j-i}e^{(j-1)}_{j-i}(X).    
\end{align}
These observations enable us to obtain the following Jacobi-Trudi formulas
for $S^{(r)}_{\lambda/\mu}(X)$.

\begin{theorem}
[{\cite[Theorem~1.1 and (1.25)]{nnsy}}]
\label{thm:JT-nnsy}
\ 
\begin{enumerate}
\item (Jacobi-Trudi formula)
We have 
\begin{equation}
\label{for:9thJT-nnsy} 
S^{(r)}_{\lambda/\mu}(X)
=\det H^{(r)}_{\lambda/\mu}(X),
\quad   
H^{(r)}_{\lambda/\mu}(X)
:=\left[h^{(r+\mu_j-j+1)}_{\lambda_i-\mu_j-i+j}(X)\right]_{1\le i,j\le \ell(\lambda)}.
\end{equation}
\item (Dual Jacobi-Trudi formula)
We have 
\begin{equation}
\label{for:9thDJT-nnsy} 
S^{(r)}_{\lambda/\mu}(X)
=\det E^{(r)}_{\lambda/\mu}(X),
\quad 
E^{(r)}_{\lambda/\mu}(X)
:=\left[e^{(r-\mu'_j+j-1)}_{\lambda_i'-\mu_j'-i+j}(X)\right]_{1\le i,j\le \ell(\lambda')}.
\end{equation}
\end{enumerate}
\end{theorem}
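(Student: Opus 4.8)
The plan is to obtain both formulas by expanding the minor that defines $S^{(r)}_{\lambda/\mu}(X)$ in terms of the explicit entries of $X_+$ and $X_+^{-1}$ recorded in \eqref{for:Xij_X-ij}, and then matching indices. Concretely, I will use that $(X_+)_{i,j}=h^{(i)}_{j-i}(X)$ and $(X^{-1}_+)_{i,j}=(-1)^{j-i}e^{(j-1)}_{j-i}(X)$, together with the two expressions for $S^{(r)}_{\lambda/\mu}(X)$ in \eqref{def:9thSchur-nnsy}: the first, $\xi^{I}_{J}(X_+)$, will produce \eqref{for:9thJT-nnsy}, and the second, $(-1)^{|\lambda/\mu|}\xi^{J^c}_{I^c}(X^{-1}_+)$, will produce \eqref{for:9thDJT-nnsy}.

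For part (1), I would write $S^{(r)}_{\lambda/\mu}(X)=\det\bigl[(X_+)_{i_a,j_b}\bigr]_{1\le a,b\le r}$ with the Maya-diagram indices $i_a=\mu_{r+1-a}+a$ and $j_b=\lambda_{r+1-b}+b$, and substitute $(X_+)_{i,j}=h^{(i)}_{j-i}(X)$. If $p:=\ell(\lambda)<r$, then $\mu_i=\lambda_i=0$ for $i>p$, so $i_a=j_a=a$ for $a\le r-p$; since $X_+$ is upper unitriangular and $i_a=\mu_{r+1-a}+a>r-p\ge j_b$ whenever $a>r-p\ge b$, the matrix $\bigl[(X_+)_{i_a,j_b}\bigr]$ is block triangular with an $(r-p)\times(r-p)$ upper unitriangular block in the top-left corner, so its determinant equals that of the trailing $p\times p$ block $\bigl[(X_+)_{i_a,j_b}\bigr]_{r-p<a,b\le r}$. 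Reversing the order of those $p$ rows and $p$ columns (which does not change the determinant) and transposing turns the $(i,j)$-entry into $h^{(r+\mu_j-j+1)}_{\lambda_i-\mu_j-i+j}(X)$, which is \eqref{for:9thJT-nnsy}.

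For part (2), I would start from $(-1)^{|\lambda/\mu|}\xi^{J^c}_{I^c}(X^{-1}_+)$ with $I^c=\{k_a\}$, $k_a=r+a-\mu'_a$, and $J^c=\{l_a\}$, $l_a=r+a-\lambda'_a$ (both already increasing), and substitute $(X^{-1}_+)_{i,j}=(-1)^{j-i}e^{(j-1)}_{j-i}(X)$. A short computation gives that the $(a,b)$-entry of $\bigl[(X^{-1}_+)_{l_a,k_b}\bigr]$ equals $(-1)^{\lambda'_a-a}\,(-1)^{\mu'_b+b}\,e^{(r-\mu'_b+b-1)}_{\lambda'_a-\mu'_b-a+b}(X)$; since the two sign factors depend only on $a$ and only on $b$ respectively, they pull out of the $s\times s$ determinant as $\prod_{a=1}^{s}(-1)^{\lambda'_a-a}\cdot\prod_{b=1}^{s}(-1)^{\mu'_b+b}$. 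Because $\lambda_1\le s$ and $\mu_1\le s$ we have $\sum_{a=1}^{s}\lambda'_a=|\lambda|$ and $\sum_{b=1}^{s}\mu'_b=|\mu|$, so this product equals $(-1)^{|\lambda|+|\mu|}=(-1)^{|\lambda/\mu|}$ and exactly cancels the prefactor $(-1)^{|\lambda/\mu|}$. The remaining determinant of $e$'s is then trimmed from size $s$ down to $\ell(\lambda')=\lambda_1$ by the same block-triangularity argument as in part (1) (now using $\lambda'_a=\mu'_a=0$ and $k_a=l_a=r+a$ for $a>\lambda_1$), which yields \eqref{for:9thDJT-nnsy}.

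I expect the only genuinely delicate point to be the sign bookkeeping in part (2): tracking the parity $(-1)^{|\lambda/\mu|}$, the per-entry signs coming from $X^{-1}_+$, and verifying that the sums over the conjugate parts collapse so that everything cancels. Part (1), and the two size-reduction steps, are routine once one observes the block-triangular structure forced by the trailing zero parts of $\lambda$, $\mu$ (respectively of $\lambda'$, $\mu'$) together with the unitriangularity of $X_\pm$. An alternative route for part (2) would be to recognize $\xi^{J^c}_{I^c}(X^{-1}_+)$, after a suitable reversal of indices, as a Jacobi–Trudi determinant attached to the conjugate shape $\lambda'/\mu'$ and thereby reduce it to part (1), but this does not circumvent the same sign computation.
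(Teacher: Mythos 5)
Your proposal is correct, and it follows exactly the route the paper indicates: the paper states this theorem as a citation to \cite{nnsy} but prefaces it by noting that \eqref{def:9thSchur-nnsy} together with the identifications $(X_{+})_{i,j}=h^{(i)}_{j-i}(X)$ and $(X^{-1}_{+})_{i,j}=(-1)^{j-i}e^{(j-1)}_{j-i}(X)$ in \eqref{for:Xij_X-ij} yield the two formulas, which is precisely your substitution-plus-index-reversal argument. Your sign bookkeeping in part (2) and the block-triangular trimming from size $r$ (resp.\ $s$) down to $\ell(\lambda)$ (resp.\ $\ell(\lambda')$) are accurate, so the proof fills in the details of the paper's sketch rather than deviating from it.
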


\begin{example}

When $\lambda=(2,2,1)$ and $\mu=(1)$, we have 
\[
\ytableausetup{boxsize=12pt,aligntableaux=center}
\scriptsize
\begin{ytableau}
\none & 1 \\
\mb{1} & 0 \\
\mb{2}  
\end{ytableau}
\,
=
\,
\left|~
\begin{array}{ccc}
\begin{ytableau}
1
\end{ytableau}
 &
\begin{ytableau}
\mb{1}&0&1
\end{ytableau}
 &
\begin{ytableau}
\mb{2}&\mb{1}&0&1
\end{ytableau}
\\[12mm]
1
&
\begin{ytableau}
\mb{1}&0
\end{ytableau}
&
\begin{ytableau}
\mb{2}&\mb{1}&0
\end{ytableau}
\\[12mm]
0
&
1
&
\begin{ytableau}
\mb{2}
\end{ytableau}
\end{array}
~\right|
=
\,
\left|~
\begin{array}{cc}
\begin{ytableau}
\mb{1}\\
\mb{2}
\end{ytableau}
 &
\begin{ytableau}
1\\
0\\
\mb{1}\\
\mb{2} 
\end{ytableau}
\\[12mm]
1
 &
\begin{ytableau}
1\\
0
\end{ytableau}
\end{array}
~\right|.
\]
\end{example}

\subsection{Giambelli formula for $S^{(r)}_{\lambda/\mu}(X)$}
\label{sebsec:Giambelli}
In this section, we prove the Giambelli formula for $S^{(r)}_{\lambda/\mu}(X)$.
To do that, we first review the Frobenius notation.
Let $\lambda$ be a partition having $p$ diagonal entries. Define the sequences of indices $\alpha_1>\cdots>\alpha_p\ge 0$ and $\beta_1>\cdots>\beta_p\ge 0$ by $\alpha_i=\lambda_i-i$ and $\beta_i=\lambda'_i-i$ for $1\le i\le p$.
Then, in the Frobenius notation, we write $\lambda=(\alpha\,|\,\beta)$ with $\alpha=(\alpha_1,\ldots,\alpha_p)$ and $\beta=(\beta_1,\ldots,\beta_p)$.
Let $\mu$ be a partition satisfying $\lambda\supset\mu$ and 
$\mu=(\gamma\,|\,\delta)$ the Frobenius notation of $\mu$ with $\gamma=(\gamma_1,\ldots,\gamma_q)$ and $\delta=(\delta_1,\ldots,\delta_q)$. 
Notice that $p\ge q$ and both $\alpha_i\ge \gamma_i$ and $\beta_i\ge\delta_i$ hold for $1\le i\le q$.

\begin{theorem}
\label{thm:Giambelli nnsy}
Retaining the notations above, we have
\begin{equation}
\label{for:Gambelli nnsy}
 S^{(r)}_{\lambda/\mu}(X)
=(-1)^q\det G^{(r)}_{\lambda/\mu}(X),
\quad 
G^{(r)}_{\lambda/\mu}(X)
:=\left[
\begin{array}{c|c}
\left[S^{(r)}_{(\alpha_i\,|\,\beta_j)}(X)\right]_{\substack{1\le i\le p \\ 1\le j\le p}} & 
\left[h^{(r+\gamma_j+1)}_{\alpha_i-\gamma_j}(X)\right]_{\substack{1\le i\le p \\ 1\le j\le q}} \\[10pt]
\hline \\[-10pt]
 \left[e^{(r-\delta_i-1)}_{\beta_j-\delta_i}(X)\right]_{\substack{1\le i\le q \\ 1\le j\le p}} & O_{q}
\end{array}
\right]
\end{equation}
with $O_{q}$ being the square zero matrix of size $q$.
\end{theorem}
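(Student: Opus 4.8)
The plan is to reduce the Giambelli formula for $S^{(r)}_{\lambda/\mu}(X)$ to a determinant identity for minors of the upper unitriangular matrix $X_+$, and then invoke the classical Giambelli-type identity for minors coming from the Pl\"ucker/Desnanot--Jacobi machinery that already underlies \cite{nnsy}. Concretely, recall from \eqref{def:9thSchur-nnsy} and \eqref{for:Weyl formula} that $S^{(r)}_{\lambda/\mu}(X)=\xi^{I}_{J}(X_+)$ with $I,J$ the Maya diagrams of $\mu,\lambda$. The first step is to translate the Frobenius data $(\alpha\,|\,\beta)$ of $\lambda$ and $(\gamma\,|\,\delta)$ of $\mu$ into the index sets $I$ and $J$: the arm parts $\alpha_i$ (resp.\ $\gamma_i$) control which "large" column (resp.\ row) indices appear, and the leg parts $\beta_j$ (resp.\ $\delta_j$) control, via the conjugate partition and the complementary sets $I^c,J^c$, which "small" indices are deleted. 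I would set up a bijective dictionary: the $p+q$ rows and $p+q$ columns of $G^{(r)}_{\lambda/\mu}(X)$ should correspond to a splitting of $[N]$ adapted to the staircase $r$, with the hook pieces $(\alpha_i\,|\,\beta_j)$, $(\alpha_i\,|\,\emptyset)$-type single rows $h$, and $(\emptyset\,|\,\beta_j)$-type single columns $e$ being exactly the $1\times 1$-bordered minors one gets when expanding the big minor $\xi^{I}_{J}(X_+)$ along a Frobenius-type block decomposition.

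The second step is the actual determinant identity. Using \eqref{for:Xij_X-ij}, every hook Schur function $S^{(r)}_{(\alpha_i\,|\,\beta_j)}(X)$, every $h^{(r+\gamma_j+1)}_{\alpha_i-\gamma_j}(X)$, and every $e^{(r-\delta_i-1)}_{\beta_j-\delta_i}(X)$ is itself a ratio of minors of $X$ (equivalently a minor of $X_+$ or $X_+^{-1}$). So \eqref{for:Gambelli nnsy} becomes an assertion that a certain $(p+q)\times(p+q)$ determinant of minors equals $(-1)^q$ times a single minor $\xi^I_J(X_+)$. This is precisely the shape of a generalized Giambelli / Lascoux--Pragacz identity for minors of a matrix, and the cleanest route is to prove it for the \emph{universal} case where $X_+$ is a generic upper unitriangular matrix — since both sides are rational functions of the $x_{i,j}$, it suffices to verify the identity there, and then the general statement follows because $S^{(r)}_{\lambda/\mu}(X)$ depends on $X$ only through $X_+$. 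For the universal case I would either (a) cite/adapt the non-skew Giambelli formula of \cite{nnsy} together with a skew-to-non-skew reduction — expressing $\xi^I_J(X_+)$ in terms of $\xi^{[r]}_{J}$ and $\xi^{[r]}_{I}$-type quantities via \eqref{for:xi formula} and then applying the known non-skew result — or (b) run the Desnanot--Jacobi / Pl\"ucker expansion directly: repeatedly apply the Pl\"ucker relations (as set up in Section~\ref{pformula}) to peel off one hook at a time, which is the standard inductive proof of Giambelli and produces exactly the bordered block structure, with the sign $(-1)^q$ accumulating from the $q$ deleted diagonal cells of $\mu$.

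The main obstacle I anticipate is purely bookkeeping: getting the index sets, the shifts in the superscripts $(r+\gamma_j+1)$ and $(r-\delta_i-1)$, and especially the global sign $(-1)^q$ to come out correctly. The superscript shifts are forced by \eqref{for:Xij_X-ij} (which says the $(i,j)$ entry of $X_+$ is $h^{(i)}_{j-i}$ and of $X_+^{-1}$ is $(-1)^{j-i}e^{(j-1)}_{j-i}$), so the content/diagonal of the Frobenius hooks dictates everything; the sign $(-1)^q$ should match the $(-1)^{|\lambda/\mu|}$ discrepancy in \eqref{def:9thSchur-nnsy} restricted to the $q$ leg cells that get removed when passing from $\lambda$'s hooks to $\mu$'s hooks — one must check this against the known $\mu=\varnothing$ case (where $q=0$ and \eqref{for:Gambelli nnsy} reduces to the \cite{nnsy} Giambelli formula with no sign), which is a good consistency test. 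The algebraic content beyond that is entirely supplied by multilinearity of the determinant and the Pl\"ucker relations, so once the dictionary is pinned down the proof is a finite induction on $q$ (or on $|\lambda/\mu|$), each step being a single Pl\"ucker move.
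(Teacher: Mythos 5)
Your setup is correct (identifying $S^{(r)}_{\lambda/\mu}(X)=\xi^{I}_{J}(X_+)$, translating the Frobenius data into the Maya diagrams, and recognizing via \eqref{for:Xij_X-ij} that every entry of $G^{(r)}_{\lambda/\mu}(X)$ is itself a minor), but the engine that actually proves the identity is missing. The paper's proof (following Lascoux--Pragacz \cite{lp84}) rewrites $S^{(r)}_{\lambda/\mu}(X)=\xi^{I}_{J}(H)$ with $H=[h^{(i)}_{j-i}(X)]$ as in \eqref{for:9thJT-nnsy-h2}, and then applies Bazin's compound-determinant theorem (Theorem~\ref{thm:Bazin}) to an augmented matrix $Z=[\,H'\,|\,K\,]$, where $H'$ is the submatrix of $H$ selected by the Frobenius data and $K$ is a block of unit columns; it is precisely the Bazin identity \eqref{for:Bazin} that converts the single skew minor into a $(p+q)\times(p+q)$ determinant whose entries are the hook functions $S^{(r)}_{(\alpha_i|\beta_j)}$, the $h$-block, the $e$-block, and the zero block $O_q$. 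Neither of your two routes supplies an identity of this strength. Route (a) does not go through as described: by \eqref{for:xi formula} (Cauchy--Binet) the skew minor $\xi^{I}_{J}(X_+)$ is a \emph{sum} over intermediate index sets, not a simple combination of the non-skew quantities $\xi^{1,\ldots,r}_{J}(X)$ appearing in \eqref{for:Weyl formula}, so the known $\mu=\varnothing$ Giambelli formula of \cite{nnsy} cannot simply be ``applied'' to it --- this is exactly why the skew case required a new argument. Route (b), ``one Pl\"ucker move peels off one hook,'' is never specified: you do not say which Desnanot--Jacobi/Pl\"ucker identity constitutes the inductive step, nor how the entries $e^{(r-\delta_i-1)}_{\beta_j-\delta_i}(X)$ and the block $O_q$ would emerge from it; note that Desnanot--Jacobi applied to $G^{(r)}_{\lambda/\mu}(X)$ yields quadratic \emph{consequences} of \eqref{for:Gambelli nnsy} (as in Section~\ref{djformulasection}), not a proof of it.

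Two further points. Your sign heuristic is wrong: the factor $(-1)^q$ has nothing to do with the $(-1)^{|\lambda/\mu|}$ appearing in \eqref{def:9thSchur-nnsy}; in the paper it arises from explicit parity bookkeeping in the Bazin application (reordering $A\sqcup C$ and $B\sqcup C$, cofactor expansions in the bordered rows and columns, and the final check $\varepsilon-\varepsilon'\equiv q \pmod 2$). Also, the reduction to a ``generic $X_+$'' buys nothing, since the statement is already an identity of rational functions in the $x_{i,j}$. So as written the proposal is a reasonable plan with correct peripheral observations, but it has a genuine gap at its core: the key compound-determinant identity (Bazin's theorem, or an equivalent such as the Lascoux--Pragacz/Hamel--Goulden machinery) is neither identified nor proved, and that identity is the entire content of the theorem beyond bookkeeping.
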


We notice that this was proved in \cite[Theorem~1.2]{nnsy} 
only when $\mu=\varnothing$.
To prove this in general situations, 
we use the following formula concerning minor determinants.

\begin{theorem}
[{\cite{b}, cf. \cite[Corollary~3.2]{o21}}]
\label{thm:Bazin}
Let $m,n$ be positive integers such that $m\le n$.
Let $Z$ be a matrix having $n$ rows,
and $A=(a_1,\ldots,a_m)$, $B=(b_1,\ldots,b_m)$ and $C$ 
be sequences of column indices of length $m$, $m$ and $n-m$, respectively. 
Then, it holds that  
\begin{align}
\label{for:Bazin}
 \det\left[\xi^{(1,\ldots,n)}_{(a_{i})\,\sqcup\,(B\setminus(b_j))\,\sqcup\, C}(Z)\right]_{1\le i,j\le m}
 =(-1)^{\frac{m(m-1)}{2}}\xi^{(1,\ldots,n)}_{A\,\sqcup\,C}(Z)\left(\xi^{(1,\ldots,n)}_{B\,\sqcup\,C}(Z)\right)^{m-1}.
\end{align}
Here, $B\setminus(b_j)=(b_1,\ldots,\widehat{b_{j}},\ldots,b_m)$
and $\sqcup$ means the concatenation. 
\end{theorem}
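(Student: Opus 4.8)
The plan is to read every minor $\xi^{(1,\ldots,n)}_{J}(Z)$ as a bracket of column vectors. Writing $z_1,z_2,\ldots$ for the columns of $Z$ regarded as vectors in $\mathbb{C}^n$ and $[v_1,\ldots,v_n]:=\det[v_1|\cdots|v_n]$ for the bracket of $n$ such vectors, the $(i,j)$-entry of the matrix on the left-hand side of \eqref{for:Bazin} becomes
\[
L_{ij}=[z_{a_i},z_{b_1},\ldots,\widehat{z_{b_j}},\ldots,z_{b_m},z_{c_1},\ldots,z_{c_{n-m}}],
\]
while the two factors on the right are $[z_{a_1},\ldots,z_{a_m},z_C]$ and $D:=[z_{b_1},\ldots,z_{b_m},z_C]$, where $z_C$ abbreviates the fixed block $z_{c_1},\ldots,z_{c_{n-m}}$. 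Since both sides of \eqref{for:Bazin} are polynomials in the entries of $Z$, it suffices to prove the identity on the Zariski-dense locus $D\ne 0$, where the $n$ vectors $z_{b_1},\ldots,z_{b_m},z_{c_1},\ldots,z_{c_{n-m}}$ form a basis of $\mathbb{C}^n$.

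On that locus I would expand each $z_{a_i}$ in this basis, $z_{a_i}=\sum_{k=1}^m \xi_{ik}\,z_{b_k}+(\text{a vector in the span of }z_C)$. By Cramer's rule the coefficient $\xi_{ik}$ equals $[z_{b_1},\ldots,z_{a_i},\ldots,z_{b_m},z_C]/D$, with $z_{a_i}$ placed in the $k$-th slot. Moving $z_{a_i}$ from the front of $L_{ij}$ into the $j$-th slot costs $j-1$ transpositions, so $L_{ij}=(-1)^{j-1}D\,\xi_{ij}$. Pulling the factor $D$ and the column-dependent sign out of the determinant then gives
\[
\det[L_{ij}]_{1\le i,j\le m}=\Bigl(\prod_{j=1}^m(-1)^{j-1}\Bigr)D^{m}\det[\xi_{ij}]_{1\le i,j\le m}=(-1)^{\frac{m(m-1)}2}D^{m}\det[\xi_{ij}]_{1\le i,j\le m}.
\]

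It then remains to identify $\det[\xi_{ij}]$. Substituting the expansions of $z_{a_1},\ldots,z_{a_m}$ into the bracket $[z_{a_1},\ldots,z_{a_m},z_C]$ and using multilinearity, every term containing a $z_{c_l}$ among the first $m$ arguments vanishes because $z_{c_l}$ already appears in the $z_C$ block; only the $z_{b_k}$-contributions survive, and the alternating property forces the surviving multi-indices to be permutations of $(1,\ldots,m)$. This yields $[z_A,z_C]=D\cdot\det[\xi_{ij}]$, i.e. $\det[\xi_{ij}]=[z_A,z_C]/D$. Combining with the previous display gives $\det[L_{ij}]=(-1)^{m(m-1)/2}[z_A,z_C]\,D^{m-1}$, which is precisely \eqref{for:Bazin}.

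I expect the only genuine obstacle to be the bookkeeping of signs: correctly accounting for the $(-1)^{j-1}$ produced when relocating $z_{a_i}$ into slot $j$, and verifying that these assemble to the global factor $(-1)^{m(m-1)/2}$ while the expansion of $[z_A,z_C]$ contributes no further sign. A secondary point is the passage from the generic case $D\ne 0$ to the identity of polynomials; this is routine by Zariski density, but if one prefers a division-free argument the entire computation can be carried out directly in $\bigwedge^{\bullet}\mathbb{C}^n$ by wedging everything against the decomposable $(n-m)$-vector $z_{c_1}\wedge\cdots\wedge z_{c_{n-m}}$.
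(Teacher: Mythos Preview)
The paper does not supply its own proof of this statement: Theorem~\ref{thm:Bazin} is quoted from Bazin~\cite{b} (see also Okada~\cite{o21}) and used as a black box in the proof of the skew Giambelli formula. So there is no ``paper's proof'' to compare against; I can only assess your argument on its own.

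Your argument is correct. The key computation $L_{ij}=(-1)^{j-1}D\,\xi_{ij}$ is right: substituting $z_{a_i}=\sum_k\xi_{ik}z_{b_k}+(\text{span of }z_C)$ into the bracket $[z_{a_i},z_{b_1},\ldots,\widehat{z_{b_j}},\ldots,z_{b_m},z_C]$ kills every term with $k\ne j$ by repetition, and moving $z_{b_j}$ from the front into slot $j$ costs exactly $j-1$ transpositions. The identification $[z_A,z_C]=D\det[\xi_{ij}]$ is likewise the standard multilinear expansion, with no extra sign. Pulling the column signs out gives $\prod_{j=1}^m(-1)^{j-1}=(-1)^{m(m-1)/2}$, and one power of $D$ cancels, yielding \eqref{for:Bazin}. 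The Zariski-density reduction to $D\ne0$ is unproblematic since both sides are polynomials in the finitely many entries of $Z$ that actually occur. This is essentially the classical proof of Bazin's identity via Cramer/Cauchy--Binet; nothing is missing.
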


\begin{proof}
[Proof of Theorem~\ref{thm:Giambelli nnsy}]
The proof of \eqref{for:Gambelli nnsy} given here is essentially the same as that of \cite{lp84}
for the classical skew Schur functions. 
We first notice that the Maya diagrams $I$ and $J$ of $\mu$ and $\lambda$ in terms of their Frobenius notations are respectively given by 
\begin{align*}
 I&=\{1,\ldots,r\}\cup\{r+\gamma_q+1,\ldots,r+\gamma_1+1\}
\setminus\{r-\delta_1,\ldots,r-\delta_q\},\\
 J&=\{1,\ldots,r\}\cup\{r+\alpha_p+1,\ldots,r+\alpha_1+1\}
\setminus\{r-\beta_1,\ldots,r-\beta_p\}.
\end{align*}
From \eqref{def:9thSchur-nnsy} and \eqref{for:Xij_X-ij}, 
we have 
\begin{align}
\label{for:9thJT-nnsy-h2}
 S^{(r)}_{\lambda/\mu}(X)
 =\xi^{I}_{J}(H),
\end{align}
where $H:=[h^{(i)}_{j-i}(X)]_{i,j\ge 1}$.
Define the $(r+q)$-by-$(r+p)$ submatrix $H'$ of $H$ by 
\[
H'=H^{(1,\ldots,r)\,\sqcup\,(r+\gamma_q+1,\ldots,r+\gamma_1+1)}_{(1,\ldots,r)\,\sqcup\,(r+\alpha_p+1,\ldots,r+\alpha_1+1)},
\]
and the $(r+q)$-by-$2q$ matrix 
$K=[k_{i,j}]_{\substack{1\le i\le r+q \\ 1\le j\le 2q}}$
by 
\[
 k_{i,j}=
 \begin{cases}
 \delta_{i,r-\delta_{q+1-j}}& (1\le i\le r,\ 1\le j\le q),\\
 \delta_{q+i,r+j} & (r+1\le i\le r+q,\ q+1\le j\le 2q), \\
 0 & \text{otherwise}
 \end{cases}
\]
with $\delta_{i,j}$ being the Kronecker delta.
Now we apply Theorem~\ref{thm:Bazin} to the $(r+q)\times (r+p+2q)$ matrix $Z=[\,H'\,|\,K\,]$ with
\begin{align*}
A&=(a_1,\ldots,a_{p+q}), \quad 
a_{i}
=r+i \quad (1\le i\le p+q),\\
B
&=(b_1,\ldots,b_{p+q}), \quad 
b_{i}
=\begin{cases}
 r-\beta_{p+1-i} & (1\le i\le p),\\
 r+i+q & (p+1\le i\le p+q),
\end{cases}
\\
C&=(1,\ldots,r)\setminus (r-\beta_1,\ldots,r-\beta_p).
\end{align*}
Reordering $A\,\sqcup\,C$ and $B\,\sqcup\,C$ in increasing order and applying cofactor expansion yield
\begin{align*}
 \xi^{(1,\ldots,n)}_{A\,\sqcup\,C}(Z)
=(-1)^{\varepsilon_{A,C}}S^{(r)}_{\lambda/\mu}(X),
\quad 
\xi^{(1,\ldots,n)}_{B\,\sqcup\,C}(Z)
=(-1)^{\varepsilon_{B,C}},
\end{align*}
where 
$\varepsilon_{A,C}:=(p+q)(r-p)+\sum^q_{k=1}(r-\delta_{q+1-k}+r+1)$
and 
$\varepsilon_{B,C}:=\sum^{p}_{k=1}(q+r-\beta_{p+1-k}-1)+rq$.
This shows that the right-hand side of \eqref{for:Bazin} is 
$(-1)^{\varepsilon}S^{(r)}_{\lambda/\mu}(X)$
with 
$\varepsilon
:={\frac{(p+q)(p+q-1)}{2}}+\varepsilon_{A,C} +(p+q-1)\varepsilon_{B,C}$.

We next compute the left-hand side of \eqref{for:Bazin}.
Put 
$d_{i,j}=\xi^{(1,\ldots,n)}_{(a_{i})\,\sqcup\,(B\setminus (b_j))\,\sqcup\, C}(Z)$.
\begin{itemize}
\item 
When $1\le i,j\le p$, we have from \eqref{for:9thJT-nnsy-h2} $d_{i,j}
=(-1)^{u_j} S^{(r)}_{(\alpha_{p+1-i}\,|\,\beta_{p+1-j})}(X),
$
where $u_{j}:=(r-p)q+r-1
+\sum^{j-1}_{k=1}(r-\beta_{p+1-k}-2)
+\sum^{p}_{k=j+1}(r-\beta_{p+1-k}-1)$.
\item 
When $1\le i\le q$ and $1\le j\le p$,
we have  
$d_{p+i,j}
=(-1)^{u_j+\delta_{q+1-i}}e^{(r-\gamma_{q+1-i}-1)}_{\beta_{p+1-j}-\delta_{q+1-i}}(X)$.
\item 
When $1\le i\le p$ and $1\le j\le q$, we have 
${d_{i,p+j}}
=(-1)^{v_j+j+1}h^{(r+\gamma_{q+1-j}+1)}_{\alpha_{p+1-i}-\gamma_{q+1-j}}(X)$,
where $v_j:=(r-p)(q-1)+r+\sum^{p}_{k=1}(r-\beta_{p+1-k}-1)$.
\item 
When $1\le i,j\le q$, we have $d_{p+i,q+j}=0$.
\end{itemize}
Therefore, the left-hand side of \eqref{for:Bazin} is  
\begin{align*}
\det\left[
\begin{array}{c|c}
\left[(-1)^{u_j}S^{(r)}_{(\alpha_{p+1-i}\,|\,\beta_{q+1-j})}(X)\right]_{\substack{1\le i\le p \\ 1\le j\le p}} & 
\left[(-1)^{v_j}h^{(r+\gamma_{q+1-j}+1)}_{\alpha_{p+1-i}-\gamma_{q+1-j}}(X)\right]_{\substack{1\le i\le p \\ 1\le j\le q}} \\[10pt]
\hline \\[-10pt]
 \left[(-1)^{u_j+\delta_{q+1-i}}e^{(r-\delta_{q+1-i}-1)}_{\beta_{p+1-j}-\delta_{q+1-i}}(X)\right]_{\substack{1\le i\le q \\ 1\le j\le p}} & O_{q}
\end{array}
\right],
\end{align*}
which is equal to $(-1)^{\varepsilon'}
\det G^{(r)}_{\lambda/\mu}(X)$ with 
$\varepsilon':=\sum^{p}_{k=1}u_k +\sum^{q}_{k=1}v_k+\sum^{q}_{k=1}\delta_{q+1-k}+\sum^{q}_{k=1}(k+1)$.
Now the desired formula follows because
$\varepsilon-\varepsilon'\equiv q$ \!\!\! $\pmod{2}$.
\end{proof}

\begin{example}
When $\lambda=(5,5,5,3)=(4,3,2\,|\,3,2,1)$
and $\mu=(3,2)=(2,0\,|\,1,0)$, we have 
\[
\ytableausetup{boxsize=12pt,aligntableaux=center}
\scriptsize
\begin{ytableau}
\none & \none & \none & 3 & 4\\
\none & \none & 1 & 2 & 3 \\
\mb{2} & \mb{1} & 0 & 1 & 2\\
\mb{3} & \mb{2} & \mb{1} 
\end{ytableau}
\,
=
\,
\left|~
\begin{array}{ccc|cc}
\begin{ytableau}
0&1&2&3&4\\
\mb{1}\\
\mb{2} \\
\mb{3} 
\end{ytableau}
 &
\begin{ytableau}
0&1&2&3&4\\
\mb{1}\\
\mb{2} 
\end{ytableau}
 &
\begin{ytableau}
0&1&2&3&4\\
\mb{1}
\end{ytableau}
&
\begin{ytableau}
3&4
\end{ytableau}
&
\begin{ytableau}
1&2&3&4
\end{ytableau}
\\[12mm]
\begin{ytableau}
0&1&2&3\\
\mb{1}\\
\mb{2} \\
\mb{3} 
\end{ytableau}
 &
\begin{ytableau}
0&1&2&3\\
\mb{1}\\
\mb{2}
\end{ytableau}
 &
\begin{ytableau}
0&1&2&3\\
\mb{1}
\end{ytableau}
 &
\begin{ytableau}
3
\end{ytableau}
 &
\begin{ytableau}
1&2&3
\end{ytableau}
\\[12mm]
\begin{ytableau}
0&1&2\\
\mb{1}\\
\mb{2} \\
\mb{3}  
\end{ytableau}
 &
\begin{ytableau}
0&1&2\\
\mb{1}\\
\mb{2}
\end{ytableau} 
 &
\begin{ytableau}
0&1&2\\
\mb{1}
\end{ytableau}
&1&
\begin{ytableau}
1&2
\end{ytableau}\\[11mm]
\hline 
\raisebox{27pt}{}
\begin{ytableau}
\mb{2}\\
\mb{3}
\end{ytableau}
&
\begin{ytableau}
\mb{2}
\end{ytableau}
&1&0&0\\[10mm]
\begin{ytableau}
\mb{1}\\
\mb{2}\\
\mb{3}
\end{ytableau}
&
\begin{ytableau}
\mb{1}\\
\mb{2}
\end{ytableau}
&
\begin{ytableau}
\mb{1}
\end{ytableau}
&0&0
\end{array}
~\right|.
\]
\end{example}


\subsection{Ninth variations of skew Schur functions defined in \cite{fk21}}

We next present another type of the ninth variation of the skew Schur function defined by Foley and King.
In what follows, for a set $Y$,
we denote by $\mathbb{C}[Y]$ the ring of polynomials over $\mathbb{C}$ in indeterminates indexed by the elements of $Y$.


A Young tableau $(t_{i,j})\in \T(\lambda/\mu,\mathbb{N})$
with $\mathbb{N}$ being the set of positive integers is called semi-standard if it satisfies 
$t_{i,j}\le t_{i,j+1}$ and $t_{i,j}<t_{i+1,j}$
for all $i,j$.
We denote by $\SSYT(\lambda/\mu)$ the set of all semi-standard Young tableaux of shape $\lambda/\mu$. 
Moreover, for $M\in\mathbb{N}$,
let $\SSYT_M(\lambda/\mu)$ be the subset of $\SSYT(\lambda/\mu)$
consisting of all $(t_{i,j})$ satisfying $t_{i,j}\in [M]$ for all $i,j$.
The ninth variation of skew Schur function
$S^{\FK}_{\lambda/\mu}(W)$ 
with variables $W=\{w_{k,c}\}_{k\in [M],\,c\in \mathbb{Z}}$ introduced in \cite{fk21}
is defined by the following sum over all semi-standard tableaux:
\begin{align}
\label{def:9thFK}
S^{\FK}_{\lambda/\mu}(W)
:=\sum_{(t_{ij})\in \SSYT_M(\lambda/\mu)}\prod_{(i,j)\in D(\lambda/\mu)}w_{t_{i,j},c(i,j)}
\in\mathbb{C}[W].
\end{align}
This also gives a generalization of the  classical Schur function:
If $w_{k,c}$ does not depend on $c$,
then $S^{\FK}_{\lambda/\mu}(W)=s_{\lambda/\mu}(w_1,\ldots,w_M)$ where we write $w_k=w_{k,c}$.
As special cases, 
we put $h^{\FK}_d(W):=S^{\FK}_{(d)}(W)$ and $e^{\FK}_d(W):=S^{\FK}_{(1^d)}(W)$ for $d\in\mathbb{Z}_{>0}$, 
and $h^{\FK}_0(W)=e^{\FK}_0(W)=1$ and $h^{\FK}_d(W)=e^{\FK}_d(W)=0$ if $d<0$. 

Here, recall that 
$S^{(r)}_{\lambda/\mu}(X)$ 
also has a tableau expression when
$X$ is a special type of matrix.


\begin{theorem}
\label{thm:nnsy special}
({\cite[(2.59) and (2.64)]{nnsy}})
For $\bu=\{u^{(t)}_k\}_{k\in [M],\,t\in [N-1]}$ and $\bv=\{v^{(t)}_k\}_{k\in [M],\,t\in [N-1]}$, 
define the upper unitriangular matrices  
$U_M(\bu)$ and $V_M(\bv)$ of size $N$ by 
\begin{align*}
U_M(\bu):=U_1U_2\cdots U_M,
\qquad
V_M(\bv):=V_1V_2\cdots V_M,   
\end{align*}
where, 
\begin{align*}
 U_k
&=(E+u^{(1)}_{k}E_{1,2})(E+u^{(2)}_{k}E_{2,3})\cdots (E+u^{(N-1)}_{k}E_{N-1,N}),\\
 V_k
&=(E+v^{(N-1)}_{k}E_{N-1,N})\cdots (E+u^{(2)}_{k}E_{2,3})(E+u^{(1)}_{k}E_{1,2})
\end{align*}
with $E$ and $E_{i,j}$ being the unit matrix and the matrix unit of size $N$, respectively.
Then, we have 
\begin{align}
\label{for:tableau_U}
 S^{(r)}_{\lambda/\mu}(U_M(\bu))
&=\sum_{(t_{i,j})\in \SSYT_M(\lambda/\mu)}\prod_{(i,j)\in D(\lambda/\mu)}u^{(r+c(i,j))}_{t_{i,j}}\in \mathbb{C}[\bu], \\
\label{for:tableau_V}
  S^{(r)}_{\lambda/\mu}(V_M(\bv))
&=\sum_{(t_{i,j})\in \SSYT_M(\lambda'/\mu')}\prod_{(i,j)\in D(\lambda'/\mu')}v^{(r-c(i,j))}_{t_{i,j}}\in \mathbb{C}[\bv].
\end{align}
\end{theorem}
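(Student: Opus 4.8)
The plan is to evaluate $S^{(r)}_{\lambda/\mu}(X)$ directly for the two families $X=U_M(\bu)$ and $X=V_M(\bv)$, using that both matrices are already upper unitriangular, so that $X_{+}=X$ and hence, by \eqref{def:9thSchur-nnsy}, $S^{(r)}_{\lambda/\mu}(X)=\xi^{I}_{J}(X)$ with $I,J$ the Maya diagrams of $\mu,\lambda$. Since $U_M(\bu)$ is a product of the $M(N-1)$ elementary unitriangular matrices $E+u^{(t)}_kE_{t,t+1}$, iterating the Cauchy--Binet identity \eqref{for:xi formula} expresses $\xi^{I}_{J}(U_M(\bu))$ as a sum, over all chains $I=K_{0},K_{1},\dots,K_{M(N-1)}=J$ of $r$-subsets of $[N]$, of products of the minors $\xi^{L}_{L'}(E+u^{(t)}_kE_{t,t+1})$. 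Each such minor is trivial to compute: it equals $1$ when $L'=L$, it equals $u^{(t)}_k$ when $L'=(L\setminus\{t\})\cup\{t+1\}$ with $t\in L$ and $t+1\notin L$, and it vanishes otherwise. In Maya-diagram language this says that reading the factors of $U_k$ in the order $t=1,\dots,N-1$ performs a left-to-right scan in which the bead at the scanned site (if present, and if the next site is empty) may hop one step to the right with weight $u^{(t)}_k$; reading the factors of $V_k$ in the order $t=N-1,\dots,1$ performs the analogous right-to-left scan.

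The heart of the argument is the evaluation of a single block. I would show that $\xi^{K}_{K'}(U_k)$ is nonzero exactly when the partitions $\nu\subseteq\nu'$ attached to $K,K'$ differ by a horizontal strip, in which case $\xi^{K}_{K'}(U_k)=\prod_{b\in D(\nu'/\nu)}u^{(r+c(b))}_k$; and, dually, that $\xi^{K}_{K'}(V_k)$ is nonzero exactly when $\nu\subseteq\nu'$ and $\nu'/\nu$ has at most one box in each row, again with value $\prod_{b\in D(\nu'/\nu)}v^{(r+c(b))}_k$. The clean way to see the nonvanishing criterion is the remark that $\nu'/\nu$ is a horizontal strip precisely when each bead's prescribed destination lies weakly to the right of its own site and strictly to the left of the starting site of its right-hand neighbour; under this condition the left-to-right scan transports every bead to its destination without collisions, through a forced, hence unique, sequence of hops, and a hop of the bead in row $i$ out of site $q$ corresponds to the creation of the box of $\nu'/\nu$ of content $q-r$, which accounts for the exponents $r+c(b)$. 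For $V_k$ the only change is that the right-to-left order lets each bead hop at most once, so the admissible shapes are exactly those with at most one box per row.

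Composing the $M$ blocks, $S^{(r)}_{\lambda/\mu}(U_M(\bu))$ equals the sum, over all chains $\mu=\nu^{(0)}\subseteq\nu^{(1)}\subseteq\cdots\subseteq\nu^{(M)}=\lambda$ whose successive skew shapes $\nu^{(k)}/\nu^{(k-1)}$ are horizontal strips, of $\prod_{k=1}^{M}\prod_{b\in D(\nu^{(k)}/\nu^{(k-1)})}u^{(r+c(b))}_k$. Recording, for each box $b\in D(\lambda/\mu)$, the index $k$ of the strip to which $b$ belongs gives the standard bijection between such chains and $\SSYT_M(\lambda/\mu)$, turning this sum into $\sum_{(t_{i,j})\in\SSYT_M(\lambda/\mu)}\prod_{(i,j)\in D(\lambda/\mu)}u^{(r+c(i,j))}_{t_{i,j}}$; this proves \eqref{for:tableau_U}. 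For $V_M(\bv)$ the chain sum runs over chains whose successive differences have at most one box per row, that is, over fillings of $D(\lambda/\mu)$ that increase strictly along rows and weakly down columns, which are precisely the transposes of the elements of $\SSYT_M(\lambda'/\mu')$; since the transpose map $D(\lambda/\mu)\to D(\lambda'/\mu')$ negates contents, the corresponding weight becomes $\prod_{(i,j)\in D(\lambda'/\mu')}v^{(r-c(i,j))}_{t_{i,j}}$, which is \eqref{for:tableau_V}.

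The step I expect to be the main obstacle is the one-block lemma, and within it the assertion that a horizontal strip is realized by a \emph{unique} chain of elementary hops --- so that $\xi^{K}_{K'}(U_k)$ is one monomial rather than a genuine polynomial --- together with the check that the scanning order really does prevent a bead from being blocked by, or jumping over, a neighbour; the borderline case where two beads are adjacent, which corresponds to $\nu_{i-1}=\nu_i$, requires a short argument noting that the relevant box of $\nu'/\nu$ is then forced to be present. An alternative, essentially equivalent, route is to encode $U_k$ (respectively $V_k$) by its planar network and appeal to the Lindström--Gessel--Viennot lemma, the uniqueness then following from the fact that a one-row network supports at most one family of non-intersecting paths; the content bookkeeping is unchanged, and one needs only to verify that each lattice edge carries a weight determined by its position alone.
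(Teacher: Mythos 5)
Your proposal is correct, but note that the paper itself offers no proof to compare against: Theorem~\ref{thm:nnsy special} is quoted verbatim from \cite{nnsy} (their (2.59) and (2.64)), so your argument is a self-contained derivation of an imported result rather than a variant of an in-paper one. Your route is sound at every step: since $U_M(\bu)$ and $V_M(\bv)$ are upper unitriangular, $X_{+}=X$ and \eqref{def:9thSchur-nnsy} reduces $S^{(r)}_{\lambda/\mu}$ to the single minor $\xi^{I}_{J}(X)$; iterating \eqref{for:xi formula} over the elementary factors $E+u^{(t)}_kE_{t,t+1}$, whose minors are $1$, $u^{(t)}_k$ (for the single bead hop $t\to t+1$) or $0$, turns the minor into a weighted count of bead histories. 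The points you flag as the main obstacle do go through as you describe: for the left-to-right scan a bead at a site the scan has already passed can never move again and no bead can re-enter a vacated site, so two admissible chains between the same Maya diagrams would have to end in different configurations, giving uniqueness of the hop sequence and a single monomial per block; the blocking analysis shows a bead can never reach the starting site of its right neighbour within one block, which is exactly the interleaving $\nu'_{i+1}\le\nu_i$, i.e.\ the horizontal-strip condition, and the weight bookkeeping $u^{(q)}_k=u^{(r+c(b))}_k$ for a hop out of site $q$ is correct. The $V$-case and the final transposition, with the sign flip of contents producing $v^{(r-c(i,j))}_{t_{i,j}}$ on $D(\lambda'/\mu')$, also check out, as does the chain--$\SSYT$ bijection. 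One small remark: the displayed definition of $V_k$ in the statement contains a typo (the factors $E+u^{(2)}_kE_{2,3}$ and $E+u^{(1)}_kE_{1,2}$ should carry $v^{(2)}_k$ and $v^{(1)}_k$); your proof implicitly uses the intended definition, which is the right reading. Compared with simply citing \cite{nnsy}, your argument (or its Lindstr\"om--Gessel--Viennot reformulation) has the advantage of making the Pieri-type one-block lemma explicit, which is the combinatorial engine behind the tableau expression.
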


This clearly shows that 
$S^{\FK}_{\lambda/\mu}(W)$ 
is obtained as a specialization of $S^{(r)}_{\lambda/\mu}(X)$.

\begin{lemma}
\label{lemma:nnsy to fk}
For $W=\{w_{k,c}\}_{k\in [M],c\in\mathbb{Z}}$,
define 
$\bu=\{u^{(t)}_{k}\}_{k\in[M],\,t\in [N-1]}$ and 
$\bv=\{v^{(t)}_{k}\}_{k\in[M],\,t\in [N-1]}$
by $u^{(t)}_k=w_{k,t-r}$ and 
$v^{(t)}_k=w_{k,-t+r}$, respectively.
Then, for $m\in\mathbb{Z}$, we have  
\begin{align}
\label{for:nnsy to fk by U}
 S^{\FK}_{\lambda/\mu}(\tau^{m}W)
&=S^{(r+m)}_{\lambda/\mu}(U_M(\bu)),\\
\label{for:nnsy to fk by V}
 S^{\FK}_{\lambda/\mu}(\tau^{-m}W)
&=S^{(r+m)}_{\lambda'/\mu}(V_M(\bu)),
\end{align}
where $\tau^m W:=\{w_{k,m+c}\}_{k\in [M],\,c\in\mathbb{Z}}$.
In particular, for $d\in\mathbb{Z}$, 
\begin{align*}
h^{\FK}_d(\tau^{m}W)=h^{(r+m)}_d(U_M(\bu)), \quad  
e^{\FK}_d(\tau^{m}W)=e^{(r+m)}_d(U_M(\bu)).
\end{align*}
\end{lemma}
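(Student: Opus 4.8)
The plan is to read the lemma off directly from Theorem~\ref{thm:nnsy special}: once the variables are specialized as prescribed, all that remains is to check that the index bookkeeping in \eqref{for:tableau_U} and \eqref{for:tableau_V} converts the parameter shift $r\mapsto r+m$ into the shift operator $\tau^{\pm m}$ on the array $W$. Throughout one may harmlessly take $r$ and $s$ (hence $N=r+s$) large enough that every shape involved sits in the relevant rectangle and every superscript occurring lies in $[N-1]$; this affects neither side.

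For \eqref{for:nnsy to fk by U} I would start from \eqref{for:tableau_U} with $r$ replaced by $r+m$, i.e.
\[
S^{(r+m)}_{\lambda/\mu}(U_M(\bu))=\sum_{(t_{i,j})\in \SSYT_M(\lambda/\mu)}\ \prod_{(i,j)\in D(\lambda/\mu)}u^{(r+m+c(i,j))}_{t_{i,j}},
\]
and then substitute $u^{(t)}_k=w_{k,t-r}$, so that $u^{(r+m+c(i,j))}_{t_{i,j}}=w_{t_{i,j},\,m+c(i,j)}=(\tau^m W)_{t_{i,j},c(i,j)}$. Comparing with the defining sum \eqref{def:9thFK} of $S^{\FK}_{\lambda/\mu}$ evaluated at $\tau^m W$, the two expressions agree term by term, which is \eqref{for:nnsy to fk by U}; the identities for $h^{\FK}_d$ and $e^{\FK}_d$ are then the special cases $\lambda/\mu=(d)$ and $\lambda/\mu=(1^d)$.

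For \eqref{for:nnsy to fk by V} I would argue identically, now applying \eqref{for:tableau_V} with the skew shape renamed from $\lambda/\mu$ to $\lambda'/\mu'$, so that the shape over which its tableau sum runs becomes $(\lambda'/\mu')'=\lambda/\mu$, and with $r$ replaced by $r+m$; this gives $S^{(r+m)}_{\lambda'/\mu'}(V_M(\bv))=\sum_{(t_{i,j})\in \SSYT_M(\lambda/\mu)}\prod_{(i,j)\in D(\lambda/\mu)}v^{(r+m-c(i,j))}_{t_{i,j}}$. Since $v^{(t)}_k=w_{k,-t+r}$, the weight becomes $v^{(r+m-c(i,j))}_{t_{i,j}}=w_{t_{i,j},\,-m+c(i,j)}=(\tau^{-m}W)_{t_{i,j},c(i,j)}$, and comparison with \eqref{def:9thFK} yields \eqref{for:nnsy to fk by V}. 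Here the conjugate partition on the ninth-variation side is exactly the transpose already built into \eqref{for:tableau_V}, and the opposite sign conventions in the definitions of $\bu$ and $\bv$ are precisely what make both shifts come out right.

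I do not expect a real obstacle: the whole argument is the substitution $u^{(t)}_k=w_{k,t-r}$, $v^{(t)}_k=w_{k,-t+r}$ combined with Theorem~\ref{thm:nnsy special}. The two points worth a moment's care are checking that after the shift $r\mapsto r+m$ every superscript $r+m+c(i,j)$ still lies in the admissible range (so that Theorem~\ref{thm:nnsy special} applies verbatim once $r,s$ are enlarged), and keeping track of the fact that in the $V$-case it is $\lambda'/\mu'$, and the matrix $V_M(\bv)$, that appear on the ninth-variation side.
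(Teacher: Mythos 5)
Your proof is correct and is exactly the argument the paper intends: the lemma is stated without proof as an immediate specialization of Theorem~\ref{thm:nnsy special}, obtained by substituting $u^{(t)}_k=w_{k,t-r}$, $v^{(t)}_k=w_{k,-t+r}$ into \eqref{for:tableau_U} and \eqref{for:tableau_V} with $r$ replaced by $r+m$, just as you do. Note that you have tacitly corrected the evident typos in \eqref{for:nnsy to fk by V}, where $\lambda'/\mu$ and $V_M(\bu)$ should read $\lambda'/\mu'$ and $V_M(\bv)$; that corrected identity is precisely what your computation establishes.
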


\begin{remark}
From 
Theorem~\ref{thm:JT-nnsy},
Theorem~\ref{thm:Giambelli nnsy}
and Lemma~\ref{lemma:nnsy to fk},
one can immediately deduce the Jacobi-Trudi and the dual Jacobi-Trudi formulas 
\begin{align*}
 S^{\FK}_{\lambda/\mu}(\tau^m W)
&=\det\left[h^{\FK}_{\lambda_i-\mu_j-i+j}(\tau^{m+\mu_j-j+1}W)\right]_{1\le i,j\le \ell(\lambda)},\\
 S^{\FK}_{\lambda/\mu}(\tau^{-m} W)
&=\det\left[e^{\FK}_{\lambda'_i-\mu'_j-i+j}(\tau^{m-\mu_j+j-1}W)\right]_{1\le i,j\le \ell(\lambda')},
\end{align*}
obtained in \cite[Corollary~5.1 and Corollary~5.2]{fk21}, respectively, and the Giambelli formula 
\begin{align*}
 S^{\FK}_{\lambda/\mu}(\tau^m W)
=(-1)^q\det\left[
\begin{array}{c|c}
\left[S^{\FK}_{(\alpha_i\,|\,\beta_j)}(\tau^m W)\right]_{\substack{1\le i\le p \\ 1\le j\le p}} & 
\left[h^{\FK}_{\alpha_i-\gamma_j}(\tau^{m+\gamma_j+1} W)\right]_{\substack{1\le i\le p \\ 1\le j\le q}} \\[10pt]
\hline \\[-10pt]
 \left[e^{\FK}_{\beta_j-\delta_i}(\tau^{m-\delta_i-1}W)\right]_{\substack{1\le i\le q \\ 1\le j\le p}} & O_{q}
\end{array}
\right],
\end{align*}
obtained in \cite[Corollary~5.3]{fk21}.
\end{remark}



\section{Applications of the Desnanot-Jacobi's adjoint matrix theorem
}
\label{djformulasection}

In this section, applying the following Desnanot-Jacobi's adjoint matrix theorem, 
we derive some algebraic relations among $S^{(r)}_{\lambda/\mu}(X)$.

\begin{theorem}
[Desnanot-Jacobi's adjoint matrix theorem]
\label{djformula}
For an arbitrary $(k+1)$-by-$(k+1)$ matrix $Z$, we have 
\begin{align}
\label{for:Desnanot-Jacobi formula}
\xi^{1,\ldots,k+1}_{1,\ldots,k+1}(Z)
\xi^{2,\ldots,k}_{2,\ldots,k}(Z)
=
\xi^{1,\ldots,k}_{1,\ldots,k}(Z)
\xi^{2,\ldots,k+1}_{2,\ldots,k+1}(Z)
-
\xi^{2,\ldots,k+1}_{1,\ldots,k}(Z)
\xi^{1,\ldots,k}_{2,\ldots,k+1}(Z).
\end{align}
\end{theorem}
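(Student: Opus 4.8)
The plan is to derive \eqref{for:Desnanot-Jacobi formula} from Jacobi's complementary minor formula \eqref{for:Jacobi complement} applied to $Z$ itself. Both sides of \eqref{for:Desnanot-Jacobi formula} are polynomials in the entries of $Z$, so it suffices to verify the identity on the Zariski-dense locus $\det Z\neq 0$; equivalently, I would prove it for the generic $(k+1)$-by-$(k+1)$ matrix, whose determinant is a nonzero element of the rational function field in its entries, and then specialize. Throughout I therefore assume $Z$ invertible and write $\operatorname{adj}(Z)$ for its adjugate, so that $Z\,\operatorname{adj}(Z)=(\det Z)\,E$ and $Z^{-1}=(\det Z)^{-1}\operatorname{adj}(Z)$.

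The first step is to apply \eqref{for:Jacobi complement} to $Z$ with the complementary index sets $I=J=\{2,\ldots,k\}$, so that $I^{c}=J^{c}=\{1,k+1\}$. Since $\sum_{i\in I}i+\sum_{j\in J}j=2(2+\cdots+k)$ is even, this yields
\[
\xi^{2,\ldots,k}_{2,\ldots,k}(Z)=\det Z\cdot\xi^{1,k+1}_{1,k+1}(Z^{-1}).
\]
Multiplying through by $\det Z=\xi^{1,\ldots,k+1}_{1,\ldots,k+1}(Z)$ and substituting $Z^{-1}=(\det Z)^{-1}\operatorname{adj}(Z)$, which scales a $2$-by-$2$ minor by $(\det Z)^{-2}$, the left-hand side of \eqref{for:Desnanot-Jacobi formula} collapses to
\[
\xi^{1,\ldots,k+1}_{1,\ldots,k+1}(Z)\,\xi^{2,\ldots,k}_{2,\ldots,k}(Z)=\xi^{1,k+1}_{1,k+1}\bigl(\operatorname{adj}(Z)\bigr).
\]
The second step is to expand this $2$-by-$2$ minor of $\operatorname{adj}(Z)$ in terms of cofactors of $Z$. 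The relevant entries are $(\operatorname{adj}Z)_{1,1}=\xi^{2,\ldots,k+1}_{2,\ldots,k+1}(Z)$, $(\operatorname{adj}Z)_{k+1,k+1}=\xi^{1,\ldots,k}_{1,\ldots,k}(Z)$, $(\operatorname{adj}Z)_{1,k+1}=(-1)^{k}\,\xi^{1,\ldots,k}_{2,\ldots,k+1}(Z)$ and $(\operatorname{adj}Z)_{k+1,1}=(-1)^{k}\,\xi^{2,\ldots,k+1}_{1,\ldots,k}(Z)$; the two factors $(-1)^{k}$ multiply to $+1$, so the $2$-by-$2$ determinant is exactly the right-hand side of \eqref{for:Desnanot-Jacobi formula}, which finishes the proof.

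There is no deep obstacle here; the only genuinely fiddly part is the sign bookkeeping — checking that the prefactor in \eqref{for:Jacobi complement} is trivial for the chosen index sets and that the cofactor signs in the adjugate cancel in the cross term — together with the (standard) reduction from invertible $Z$ to an arbitrary matrix. One could instead give the classical condensation-type argument, which sidesteps the invertibility reduction, but I would present the proof above since it stays closest to the minor-determinant machinery already recorded in \eqref{for:xi formula} and \eqref{for:Jacobi complement}.
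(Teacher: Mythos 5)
Your proof is correct. Note that the paper does not actually prove Theorem~\ref{djformula}: it is quoted as the classical Desnanot--Jacobi (Dodgson condensation) identity and only illustrated diagrammatically, so there is no in-paper argument to compare against; your write-up supplies a proof where the paper has none. Your route is one of the standard ones and fits well with the machinery already recorded in Section~2: applying \eqref{for:Jacobi complement} with $I=J=\{2,\ldots,k\}$ (the sign prefactor is indeed trivial, since $\sum_{i\in I}i+\sum_{j\in J}j$ is even, and because $I=J$ the row/column transposition in \eqref{for:Jacobi complement} is harmless) gives $\xi^{2,\ldots,k}_{2,\ldots,k}(Z)=\det Z\cdot\xi^{1,k+1}_{1,k+1}(Z^{-1})$, and your identification of the four adjugate entries, with the cross-term signs $(-1)^{k}\cdot(-1)^{k}=1$ cancelling, is accurate, so the $2$-by-$2$ minor of $\operatorname{adj}(Z)$ is exactly the right-hand side of \eqref{for:Desnanot-Jacobi formula}. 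The reduction from invertible to arbitrary $Z$ by polynomiality/Zariski density is standard and correctly invoked; the only cosmetic remark is the degenerate case $k=1$, where $\xi^{2,\ldots,k}_{2,\ldots,k}(Z)$ is the empty minor, conventionally equal to $1$, and the identity reduces to the definition of a $2\times 2$ determinant, so nothing breaks.
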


This formula can be illustrated by the following diagrams when $k=3$.
\[
\ytableausetup{boxsize=8pt,aligntableaux=center} 
\begin{ytableau}
 *(gray) & *(gray) & *(gray) & *(gray) \\
 *(gray) & *(gray) & *(gray) & *(gray) \\
 *(gray) & *(gray) & *(gray) & *(gray) \\
 *(gray) & *(gray) & *(gray) & *(gray) \\
\end{ytableau}
\cdot 
\begin{ytableau}
 \, & \, & \, & \, \\
 \, & *(gray) & *(gray) & \, \\
 \, & *(gray) & *(gray) & \, \\
 \, & \, & \, & \, 
\end{ytableau}
=
\begin{ytableau}
 *(gray) & *(gray) & *(gray) & \, \\
 *(gray) & *(gray) & *(gray) & \, \\
 *(gray) & *(gray) & *(gray) & \, \\
 \, & \, & \, & \, 
\end{ytableau}
\cdot 
\begin{ytableau}
 \, & \, & \, & \, \\
 \, & *(gray) & *(gray) & *(gray) \\
 \, & *(gray) & *(gray) & *(gray) \\
 \, & *(gray) & *(gray) & *(gray)  
\end{ytableau}
-
\begin{ytableau}
 \, & \, & \, & \, \\
 *(gray) & *(gray) & *(gray) & \, \\
 *(gray) & *(gray) & *(gray) & \, \\
 *(gray) & *(gray) & *(gray) & \, \\
\end{ytableau}
\cdot 
\begin{ytableau}
 \, & *(gray) & *(gray) & *(gray) \\
 \, & *(gray) & *(gray) & *(gray) \\
 \, & *(gray) & *(gray) & *(gray) \\
 \, & \, & \, & \, 
\end{ytableau}
\,.
\]
Using Theorem~\ref{djformula}, 
Fulmek and Kleber \cite[Theorem 2]{fk} proved the identity 
\begin{align}
\label{for:Fulmek and Kleber DJAMT}
s_{(\lambda_1,\ldots,\lambda_{k})}s_{(\lambda_2,\ldots,\lambda_{k+1})}
=s_{(\lambda_2,\ldots,\lambda_{k})}s_{(\lambda_1,\ldots,\lambda_{k+1})}
+s_{(\lambda_2-1,\ldots,\lambda_{k+1}-1)}s_{(\lambda_1+1,\ldots,\lambda_{k}+1)}.
\end{align}
We first generalize \eqref{for:Fulmek and Kleber DJAMT} to $S^{(r)}_{\lambda/\mu}(X)$.

\begin{theorem}
\label{fk9th}
Let $\lambda/\mu$ be a skew partition with 
$\lambda=(\lambda_1,\ldots,\lambda_{k+1})$ and 
$\mu=(\mu_1,\ldots,\mu_{k+1})$,
and $\lambda'=(\lambda'_1,\ldots,\lambda'_{l+1})$ and 
$\mu'=(\mu'_1,\ldots,\mu'_{l+1})$ their conjugates, respectively.
\begin{itemize}
\item[(1)] 
 It holds that 
\begin{align}
\label{for:DJAMT H}
\begin{split}
& 
S_{(\lambda_1,\ldots,\lambda_{k+1})/(\mu_1,\ldots,\mu_{k+1})}^{(r)}(X)
\cdot S_{(\lambda_2,\ldots,\lambda_{k})/(\mu_2,\ldots,\mu_{k})}^{(r-1)}(X)
\\
&=S_{(\lambda_1,\ldots,\lambda_{k})/(\mu_1,\ldots,\mu_{k})}^{(r)}(X)
\cdot S_{(\lambda_2,\ldots,\lambda_{k+1})/(\mu_2,\ldots,\mu_{k+1})}^{(r-1)}(X)
\\
&\ \ \ 
-S_{(\lambda_2-1,\ldots,\lambda_{k+1}-1)/(\mu_1,\ldots,\mu_{k})}^{(r)}(X)
\cdot S_{(\lambda_1+1,\ldots,\lambda_{k}+1)/(\mu_2,\ldots,\mu_{k+1})}^{(r-1)}(X).
\end{split}
\end{align}
\item[(2)]  
 It holds that 
\begin{align}
\label{for:DJAMT E}
\begin{split}
& 
S_{(\lambda'_1,\ldots,\lambda'_{l+1})'/(\mu'_1,\ldots,\mu'_{l+1})'}^{(r)}(X)
 \cdot S_{(\lambda'_2,\ldots,\lambda'_{l})'/(\mu'_2,\ldots,\mu'_{l})'}^{(r+1)}(X)
\\
&=S_{(\lambda'_1,\ldots,\lambda'_{l})'/(\mu'_1,\ldots,\mu'_{l})'}^{(r)}(X)
 \cdot S_{(\lambda'_2,\ldots,\lambda'_{l+1})'/(\mu'_2,\ldots,\mu'_{l+1})'}^{(r+1)}(X)\\
&\ \ \ 
-S_{(\lambda'_2-1,\ldots,\lambda'_{l+1}-1)'/(\mu'_1,\ldots,\mu'_{l})'}^{(r)}(X)
 \cdot  S_{(\lambda'_1+1,\ldots,\lambda'_{l}+1)'/(\mu'_2,\ldots,\mu'_{l+1})'}^{(r+1)}(X).
 \end{split}
\end{align}
\end{itemize}
Here, we understand $S^{(r)}_{\lambda/\mu}(X)=0$
if $\lambda/\mu$ is not a skew partition.
\end{theorem}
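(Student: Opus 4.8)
The plan is to deduce both identities directly from the Desnanot–Jacobi adjoint matrix theorem (Theorem~\ref{djformula}) applied to the Jacobi–Trudi matrices of Theorem~\ref{thm:JT-nnsy}. For part~(1), I would start from the $(k+1)$-by-$(k+1)$ matrix $H^{(r)}_{\lambda/\mu}(X)=\left[h^{(r+\mu_j-j+1)}_{\lambda_i-\mu_j-i+j}(X)\right]_{1\le i,j\le k+1}$ whose determinant is $S^{(r)}_{\lambda/\mu}(X)$ by \eqref{for:9thJT-nnsy-h2}. The key observation is that each of the six minors appearing in \eqref{for:Desnanot-Jacobi formula}, when $Z=H^{(r)}_{\lambda/\mu}(X)$, is again a Jacobi–Trudi-type determinant for an appropriate skew shape and shifted superscript. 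Deleting the last row and column leaves $\left[h^{(r+\mu_j-j+1)}_{\lambda_i-\mu_j-i+j}(X)\right]_{1\le i,j\le k}$, which is exactly $H^{(r)}_{(\lambda_1,\ldots,\lambda_k)/(\mu_1,\ldots,\mu_k)}(X)$; deleting the first row and column leaves a matrix whose $(i,j)$-entry is $h^{(r+\mu_{j+1}-j)}_{\lambda_{i+1}-\mu_{j+1}-i+j}(X)$, and reindexing $\lambda_{i+1},\mu_{j+1}$ as a new partition pair of length $k$ with superscript base $r-1$ (since $\mu_{j+1}-j=(\mu_{j+1}-1)-(j-1)$) identifies it with $H^{(r-1)}_{(\lambda_2,\ldots,\lambda_{k+1})/(\mu_2,\ldots,\mu_{k+1})}(X)$.

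The remaining four minors are handled the same way. Deleting the last row and first column of $Z$ gives a matrix with $(i,j)$-entry $h^{(r+\mu_{j+1}-j)}_{\lambda_i-\mu_{j+1}-i+j}(X)$ for $1\le i,j\le k$; writing $\lambda_i=(\lambda_i+1)-1$ and shifting shows this is $H^{(r-1)}_{(\lambda_1+1,\ldots,\lambda_k+1)/(\mu_2,\ldots,\mu_{k+1})}(X)$. Symmetrically, deleting the first row and last column yields $H^{(r)}_{(\lambda_2-1,\ldots,\lambda_{k+1}-1)/(\mu_1,\ldots,\mu_k)}(X)$. One must check that the three ``full'' minors and the three ``submatrix'' minors match up with the terms of \eqref{for:DJAMT H} in the correct positions and with the correct signs; since Desnanot–Jacobi as stated in \eqref{for:Desnanot-Jacobi formula} has the minus sign already built in and no stray signs appear from the reindexing (each shift replaces $h^{(a)}_{b}$ by $h^{(a\pm1)}_{b\pm1}$, not by its negative), the signs come out exactly as in \eqref{for:Fulmek and Kleber DJAMT}. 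The convention $S^{(r)}_{\lambda/\mu}(X)=0$ when $\lambda/\mu$ is not a skew partition is automatically consistent with the determinant, since in that case the associated Jacobi–Trudi matrix has a row or column forcing a vanishing determinant (e.g. some $\lambda_i<\mu_i$ makes an entry $h^{(\cdot)}_{<0}=0$ propagate appropriately), so no separate argument is needed.

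For part~(2), the cleanest route is to apply part~(1) with $\lambda,\mu$ replaced by their conjugates $\lambda',\mu'$, together with the dual Jacobi–Trudi formula \eqref{for:9thDJT-nnsy}: running the identical Desnanot–Jacobi argument on the $(l+1)$-by-$(l+1)$ matrix $E^{(r)}_{\lambda/\mu}(X)=\left[e^{(r-\mu'_j+j-1)}_{\lambda_i'-\mu_j'-i+j}(X)\right]$ produces \eqref{for:DJAMT E}, where the superscript now shifts by $+1$ rather than $-1$ because the base in the $e$-formula carries $-\mu'_j+j-1$, so deleting the first row and column raises $r$ to $r+1$. Alternatively, one can simply substitute $\lambda\mapsto\lambda'$, $\mu\mapsto\mu'$, $r\mapsto$ (large enough) into \eqref{for:DJAMT H} and transpose all shapes, using $(\lambda')'=\lambda$; but the direct computation with $E$ is more transparent and avoids bookkeeping about the ambient rectangle size. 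I expect the main obstacle to be purely clerical: verifying that the index shifts $\mu_{j+1}-j\mapsto(\mu_{j+1}-1)-(j-1)$ and $\lambda_i\mapsto(\lambda_i+1)-1$ genuinely land on the nominal Jacobi–Trudi matrices of the six skew shapes in \eqref{for:DJAMT H}–\eqref{for:DJAMT E} — i.e. that lengths, superscripts, and subscripts all align — and confirming the degenerate/boundary cases (empty shapes, $\mu$ touching $\lambda$) are covered by the vanishing convention. There is no serious conceptual difficulty once the dictionary ``delete first/last row–column $\leftrightarrow$ pass to sub-partition with shifted $r$'' is set up.
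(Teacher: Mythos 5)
Your proposal is correct and follows essentially the same route as the paper: apply the Desnanot--Jacobi identity \eqref{for:Desnanot-Jacobi formula} to $Z=H^{(r)}_{\lambda/\mu}(X)$ (resp.\ $Z=E^{(r)}_{\lambda/\mu}(X)$ for part (2)) and identify the six minors as Jacobi--Trudi determinants of the shifted shapes with base $r$ or $r\mp1$, exactly as in the paper's proof. The only slip is typographical: the entry of the last-row/first-column minor should be $h^{(r+\mu_{j+1}-j)}_{\lambda_i-\mu_{j+1}-i+j+1}(X)$ (subscript $+j+1$, not $+j$), which is indeed the $(i,j)$-entry of $H^{(r-1)}_{(\lambda_1+1,\ldots,\lambda_k+1)/(\mu_2,\ldots,\mu_{k+1})}(X)$ as you conclude.
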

\begin{proof}
Applying \eqref{for:Desnanot-Jacobi formula} 
to the case $Z=H^{(r)}_{\lambda/\mu}(X)
 $ with 
\begin{align*}
\xi^{1,\ldots,k+1}_{1,\ldots,k+1}(Z)
&=S^{(r)}_{(\lambda_1,\ldots,\lambda_{k+1})/(\mu_1,\ldots,\mu_{k+1})}(X),
&
\xi^{2,\ldots,k}_{2,\ldots,k}(Z)
&=S^{(r-1)}_{(\lambda_2,\ldots,\lambda_{k})/(\mu_2,\ldots,\mu_{k})}(X),\\
\xi^{1,\ldots,k}_{1,\ldots,k}(Z)
&=S^{(r)}_{(\lambda_1,\ldots,\lambda_{k})/(\mu_1,\ldots,\mu_{k})}(X),
&
\xi^{2,\ldots,k+1}_{2,\ldots,k+1}(Z)
&=S^{(r-1)}_{(\lambda_2,\ldots,\lambda_{k+1})/(\mu_2,\ldots,\mu_{k+1})}(X),\\
\xi^{2,\ldots,k+1}_{1,\ldots,k}(Z)
&=S^{(r)}_{(\lambda_2-1,\ldots,\lambda_{k+1}-1)/(\mu_1,\ldots,\mu_{k})}(X),
&
\xi^{1,\ldots,k}_{2,\ldots,k+1}(Z)
&=S^{(r-1)}_{(\lambda_1+1,\ldots,\lambda_{k}+1)/(\mu_2,\ldots,\mu_{k+1})}(X),
\end{align*}
which are derived from \eqref{for:9thJT-nnsy},
we obtain the first assertion.
One can similarly prove the second one by considering the cases $Z=E^{(r)}_{\lambda/\mu}(X)
$ with \eqref{for:9thDJT-nnsy}.
\end{proof}


\begin{example}
When $\lambda/\mu
=(5,4,4,3)/(3,1,1)$,
 we have 
\begin{align*}
\ytableausetup{boxsize=10pt,aligntableaux=center} 
\begin{ytableau}
  \none &  \none & \none & {\scriptstyle 3} & {\scriptstyle 4}  \\
  \none &  {\scriptstyle 0} & {\scriptstyle 1} &  {\scriptstyle 2}  \\
 \none &  {\scriptstyle \ol1} &  {\scriptstyle 0} &  {\scriptstyle 1} \\
  {\scriptstyle \ol3} &  {\scriptstyle \ol2} & {\scriptstyle \ol1} 
\end{ytableau}
\cdot 
\begin{ytableau}
  {\scriptstyle 0} & {\scriptstyle 1} &  {\scriptstyle 2}  \\
   {\scriptstyle \ol1} &  {\scriptstyle 0} &  {\scriptstyle 1} 
\end{ytableau}
&\,\overset{\eqref{for:DJAMT H}}{=}\,
\begin{ytableau}
  \none &  \none & \none & {\scriptstyle 3} & {\scriptstyle 4}  \\
  \none &  {\scriptstyle 0} & {\scriptstyle 1} &  {\scriptstyle 2}  \\
 \none &  {\scriptstyle \ol1} &  {\scriptstyle 0} &  {\scriptstyle 1} 
\end{ytableau}
\cdot
\begin{ytableau}
  \none &  {\scriptstyle 0} & {\scriptstyle 1} &  {\scriptstyle 2}  \\
 \none &  {\scriptstyle \ol1} &  {\scriptstyle 0} &  {\scriptstyle 1} \\
  {\scriptstyle \ol3} &  {\scriptstyle \ol2} & {\scriptstyle \ol1} 
\end{ytableau}
-\,
\begin{ytableau}
  {\scriptstyle 0} & {\scriptstyle 1}   \\
  {\scriptstyle \ol1}  
\end{ytableau}
\cdot
\begin{ytableau}
  \none &  {\scriptstyle 0} & {\scriptstyle 1} & {\scriptstyle 2} & {\scriptstyle 3} & {\scriptstyle 4} \\
  \none &  {\scriptstyle \ol1} & {\scriptstyle 0} &  {\scriptstyle 1} & {\scriptstyle 2} \\
 {\scriptstyle \ol3} &  {\scriptstyle \ol2} &  {\scriptstyle \ol1} &  {\scriptstyle 0} & {\scriptstyle 1}
\end{ytableau}
\,,\\
\begin{ytableau}
  \none &  \none & \none & {\scriptstyle 3} & {\scriptstyle 4}  \\
  \none &  {\scriptstyle 0} & {\scriptstyle 1} &  {\scriptstyle 2}  \\
 \none &  {\scriptstyle \ol1} &  {\scriptstyle 0} &  {\scriptstyle 1} \\
  {\scriptstyle \ol3} &  {\scriptstyle \ol2} & {\scriptstyle \ol1} 
\end{ytableau}
\cdot 
\begin{ytableau}
  \none &  \none & {\scriptstyle 3}  \\
   {\scriptstyle 0} & {\scriptstyle 1} &  {\scriptstyle 2}  \\
   {\scriptstyle \ol1} & {\scriptstyle 0} &  {\scriptstyle 1}  \\
  {\scriptstyle \ol2} &  {\scriptstyle \ol1}  
\end{ytableau}
&\,\overset{\eqref{for:DJAMT E}}{=}\,
\begin{ytableau}
  \none &  \none & \none & {\scriptstyle 3}  \\
  \none &  {\scriptstyle 0} & {\scriptstyle 1} &  {\scriptstyle 2}  \\
 \none &  {\scriptstyle \ol1} &  {\scriptstyle 0} &  {\scriptstyle 1} \\
 {\scriptstyle \ol3} &  {\scriptstyle \ol2} &  {\scriptstyle \ol1} 
\end{ytableau}
\cdot
\begin{ytableau}
  \none &  \none & {\scriptstyle 3} &  {\scriptstyle 4}  \\
  {\scriptstyle 0} & {\scriptstyle 1} &  {\scriptstyle 2}  \\
  {\scriptstyle \ol1} &  {\scriptstyle 0} &  {\scriptstyle 1} \\
  {\scriptstyle \ol2} & {\scriptstyle \ol1} 
\end{ytableau}
-\,
\begin{ytableau}
  {\scriptstyle 0} & {\scriptstyle 1}   \\
  {\scriptstyle \ol1}  
\end{ytableau}
\cdot
\begin{ytableau}
  \none &  \none & {\scriptstyle 3} & {\scriptstyle 4}  \\
  {\scriptstyle 0} & {\scriptstyle 1} &  {\scriptstyle 2} & {\scriptstyle 3} \\
  {\scriptstyle \ol1} & {\scriptstyle 0} &  {\scriptstyle 1} & {\scriptstyle 2} \\
  {\scriptstyle \ol2} & {\scriptstyle \ol1} &  {\scriptstyle 0} & {\scriptstyle 1} \\
  {\scriptstyle \ol3} &  {\scriptstyle \ol2} &  {\scriptstyle \ol1} 
\end{ytableau}
\,.
\end{align*}
\end{example}


Let $[\,m\,|\,n\,]$ denote the $m$-by-$n$ rectangle $(n^m)$.
Applying \eqref{for:DJAMT H} (or \eqref{for:DJAMT E}) to rectangular shapes, 
one obtain the following relations.

\begin{cor}
\label{square}
For $p,q\ge 1$, we have 
\begin{align}
\label{for:9thRectangle}
\begin{split}
S_{[\,p+1\,|\,q\,]}^{(r)}(X)
S_{[\,p-1\,|\,q\,]}^{(r-1)}(X)
=S_{[\,p\,|\,q\,]}^{(r)}(X)
S_{[\,p\,|\,q\,]}^{(r-1)}(X)
-S_{[\,p\,|\,q-1\,]}^{(r)}(X)
S_{[\,p\,|\,q+1\,]}^{(r-1)}(X).
\end{split}
\end{align}
\end{cor}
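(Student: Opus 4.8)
The plan is to obtain \eqref{for:9thRectangle} as an immediate specialization of part~(1) of Theorem~\ref{fk9th}. I would apply \eqref{for:DJAMT H} with $k=p$, taking $\lambda=(q^{p+1})=[\,p+1\,|\,q\,]$ and $\mu=\varnothing$, so that $\lambda_1=\cdots=\lambda_{p+1}=q$ and $\mu_1=\cdots=\mu_{p+1}=0$. The only thing to observe is that deleting a boundary row of a rectangle, or shifting all of its rows up or down by $1$, again produces a rectangle; hence each of the six skew Schur functions occurring in \eqref{for:DJAMT H} collapses to one of the rectangular functions in \eqref{for:9thRectangle}.

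Carrying this out and using $[\,m\,|\,n\,]=(n^m)$: the first factor on the left is $S^{(r)}_{(q^{p+1})}(X)=S^{(r)}_{[\,p+1\,|\,q\,]}(X)$; deleting both the first and last rows gives $(\lambda_2,\ldots,\lambda_k)=(q^{p-1})$, so $S^{(r-1)}_{(\lambda_2,\ldots,\lambda_k)/(\mu_2,\ldots,\mu_k)}(X)=S^{(r-1)}_{[\,p-1\,|\,q\,]}(X)$; deleting only the last row, resp.\ only the first, gives $S^{(r)}_{[\,p\,|\,q\,]}(X)$, resp.\ $S^{(r-1)}_{[\,p\,|\,q\,]}(X)$; and the two "shifted" terms are $S^{(r)}_{((q-1)^p)/\varnothing}(X)=S^{(r)}_{[\,p\,|\,q-1\,]}(X)$ and $S^{(r-1)}_{((q+1)^p)/\varnothing}(X)=S^{(r-1)}_{[\,p\,|\,q+1\,]}(X)$. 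Substituting these six evaluations into \eqref{for:DJAMT H} yields \eqref{for:9thRectangle} verbatim.

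The one place where a little care is needed is the degenerate range. When $p=1$, the sequences $(\lambda_2,\ldots,\lambda_k)$ and $(\mu_2,\ldots,\mu_k)$ are empty and the associated factor is the empty determinant $S^{(\,\cdot\,)}_{\varnothing}(X)=1=S^{(\,\cdot\,)}_{[\,0\,|\,q\,]}(X)$; when $q=1$, the shifted partition $((q-1)^p)$ is empty and $S^{(r)}_{\varnothing}(X)=1=S^{(r)}_{[\,p\,|\,0\,]}(X)$; in both cases the stated identity is still correct. For $p,q\ge 1$ every shape appearing is a genuine partition, so the convention $S^{(r)}_{\lambda/\mu}(X)=0$ for non-skew $\lambda/\mu$ never enters. (Alternatively, one can run the same argument through \eqref{for:DJAMT E} with $\lambda=(q^{p+1})$ and then replace $(p,q,r)$ by $(p-1,q+1,r-1)$ and rearrange; I would mention this only as a remark.) There is no real obstacle here: the entire content is the index bookkeeping verified above, so the argument in the paper should amount to only a couple of lines.
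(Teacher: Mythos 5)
Your proof is correct and matches the paper's own derivation, which likewise obtains \eqref{for:9thRectangle} by specializing \eqref{for:DJAMT H} to $\lambda=(q^{p+1})$, $\mu=\varnothing$ (the paper simply states ``applying \eqref{for:DJAMT H} (or \eqref{for:DJAMT E}) to rectangular shapes''). Your index bookkeeping and the remarks on the degenerate cases $p=1$, $q=1$ are all consistent with the conventions in the paper.
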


We next give algebraic relations among $S_{\lambda/\mu}^{(r)}(X)$ from the Giambelli formula (Theorem~\ref{thm:Giambelli nnsy}).
To state the result, we prepare some notations. For a non-empty tuple $\alpha=(\alpha_1,\ldots,\alpha_p)$, put 
${}^{-}\alpha=(\alpha_2,\ldots,\alpha_p)$, 
$\alpha^{-}=(\alpha_1,\ldots,\alpha_{p-1})$
and 
${}^{-}\alpha^{-}=(\alpha_2,\ldots,\alpha_{p-1})$.
Moreover, for $1\le j\le p$, put $\xi^{\vee}(j)=(\xi_1,\ldots,\xi_{j-1},\xi_{j+1},\ldots,\xi_p)$.

\begin{theorem}
Let $\lambda/\mu$ be a skew partition.
Write $\lambda=(\alpha\,|\,\beta)$
and $\mu=(\gamma\,|\delta)$
in the Frobenius notations, where 
$\alpha=(\alpha_1,\ldots,\alpha_p)$,
$\beta=(\beta_1,\ldots,\beta_p)$,
$\gamma=(\gamma_1,\ldots,\gamma_q)$ 
and $\delta=(\delta_1,\ldots,\delta_q)$.
\begin{itemize}
\item[(1)] 
It holds that 
\begin{align}
\label{for:non-skew Giambelli}
\begin{split}
& S_{(\alpha\,|\,\beta)}^{(r)}(X)
 S_{({}^{-}\alpha^{-}\,|\,{}^{-}\beta^{-})}^{(r)}(X)
 =S_{(\alpha^{-}\,|\,\beta^{-})}^{(r)}(X)
 S_{({}^{-}\alpha\,|\,{}^{-}\beta)}^{(r)}(X)
-S_{({}^{-}\alpha\,|\,\beta^{-})}^{(r)}(X)
 S_{(\alpha^{-}\,|\,{}^{-}\beta)}^{(r)}(X).
\end{split}
\end{align}
\item[(2)] 
It holds that 
\begin{align}
\label{for:skew Giambelli}
\begin{split}
&S_{(\alpha\,|\,\beta)/(\gamma\,|\,\delta)}^{(r)}(X)
S_{({}^{-}\alpha\,|\,{}^{-}\beta)/(\gamma^{-}\,|\,\delta^{-})}^{(r)}(X)\\
&
=S_{(\alpha\,|\,\beta)/(\gamma^{-}\,|\,\delta^{-})}^{(r)}(X)
 S_{({}^{-}\alpha\,|\,{}^{-}\beta)/(\gamma\,|\,\delta)}^{(r)}(X)\\
 &\hspace{5mm}+\sum^{p}_{i,j=1}(-1)^{i+j}h_{\alpha_i-\gamma_q}^{(r+\gamma_q+1)}(X)e_{\beta_j-\delta_q}^{(r-\delta_q-1)}(X) S_{(\alpha^{\vee}(i)\,|\,{}^{-}\beta)/(\gamma^{-}\,|\,\delta^{-})}^{(r)}(X)S_{({}^{-}\alpha\,|\,\beta^{\vee}(j))/(\gamma^{-}\,|\,\delta^{-})}^{(r)}(X).
\end{split}
\end{align}    
\end{itemize}
\end{theorem}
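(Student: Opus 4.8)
The plan is to apply the Desnanot--Jacobi adjoint matrix theorem (Theorem~\ref{djformula}) to the Giambelli matrices supplied by Theorem~\ref{thm:Giambelli nnsy}, and to recognize each of the six resulting minors as a (signed) Giambelli determinant of one of the trimmed shapes appearing on the two sides of \eqref{for:non-skew Giambelli} and \eqref{for:skew Giambelli}. The guiding principle is that ``trimming'' the first entry of a Frobenius sequence corresponds to deleting the corresponding first row or column of the Giambelli matrix, and trimming the last entry corresponds to deleting the last one; for part~(2) the key structural feature is that the bottom-right $q\times q$ block of $G^{(r)}_{\lambda/\mu}(X)$ is zero.

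For part~(1) (where we may assume $p\ge 2$), I would take $Z=G^{(r)}_{(\alpha\,|\,\beta)}(X)=\bigl[S^{(r)}_{(\alpha_i\,|\,\beta_j)}(X)\bigr]_{1\le i,j\le p}$, the $q=0$ case of \eqref{for:Gambelli nnsy}, so that $\det Z=S^{(r)}_{(\alpha\,|\,\beta)}(X)$. In \eqref{for:Desnanot-Jacobi formula} with $k+1=p$, each of the six minors is the determinant of a contiguous submatrix $\bigl[S^{(r)}_{(\alpha_i\,|\,\beta_j)}(X)\bigr]$ whose row index runs over one of $\{1,\dots,p\}$, $\{1,\dots,p-1\}$, $\{2,\dots,p\}$, $\{2,\dots,p-1\}$ and whose column index runs over one of the same windows; since these windows of $\alpha$ and of $\beta$ stay strictly decreasing, each such determinant is again a non-skew Giambelli determinant and so, by Theorem~\ref{thm:Giambelli nnsy}, equals the appropriate one among $S^{(r)}_{(\alpha\,|\,\beta)}(X)$, $S^{(r)}_{(\alpha^{-}\,|\,\beta^{-})}(X)$, $S^{(r)}_{({}^{-}\alpha\,|\,{}^{-}\beta)}(X)$, $S^{(r)}_{({}^{-}\alpha^{-}\,|\,{}^{-}\beta^{-})}(X)$, $S^{(r)}_{({}^{-}\alpha\,|\,\beta^{-})}(X)$, $S^{(r)}_{(\alpha^{-}\,|\,{}^{-}\beta)}(X)$ (with $S^{(r)}_{\varnothing}(X)=1$). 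Substituting these into \eqref{for:Desnanot-Jacobi formula} yields \eqref{for:non-skew Giambelli} directly.

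For part~(2) (assume $p\ge q\ge 1$, write $N=p+q$), I would take $Z=G^{(r)}_{\lambda/\mu}(X)$, so $\det Z=(-1)^q S^{(r)}_{\lambda/\mu}(X)$, with rows and columns $1,\dots,p$ indexed by $\alpha_1,\dots,\alpha_p$ and $\beta_1,\dots,\beta_p$ (the $S$-part) and rows, columns $p+1,\dots,N$ indexed by $\delta_1,\dots,\delta_q$ ($e$-rows) and $\gamma_1,\dots,\gamma_q$ ($h$-columns), and apply \eqref{for:Desnanot-Jacobi formula} with $k+1=N$. Deleting the first $S$-row/column passes from $\alpha,\beta$ to ${}^{-}\alpha,{}^{-}\beta$, and deleting the last $h$-column and $e$-row passes from $\gamma,\delta$ to $\gamma^{-},\delta^{-}$; hence the four corner minors $\xi^{1,\dots,N}_{1,\dots,N}(Z)$, $\xi^{2,\dots,N-1}_{2,\dots,N-1}(Z)$, $\xi^{1,\dots,N-1}_{1,\dots,N-1}(Z)$, $\xi^{2,\dots,N}_{2,\dots,N}(Z)$ are exactly the Giambelli matrices of $(\alpha\,|\,\beta)/(\gamma\,|\,\delta)$, $({}^{-}\alpha\,|\,{}^{-}\beta)/(\gamma^{-}\,|\,\delta^{-})$, $(\alpha\,|\,\beta)/(\gamma^{-}\,|\,\delta^{-})$, $({}^{-}\alpha\,|\,{}^{-}\beta)/(\gamma\,|\,\delta)$, and by Theorem~\ref{thm:Giambelli nnsy} equal $(-1)^q S^{(r)}_{\lambda/\mu}(X)$, $(-1)^{q-1}S^{(r)}_{({}^{-}\alpha\,|\,{}^{-}\beta)/(\gamma^{-}\,|\,\delta^{-})}(X)$, $(-1)^{q-1}S^{(r)}_{(\alpha\,|\,\beta)/(\gamma^{-}\,|\,\delta^{-})}(X)$, $(-1)^{q}S^{(r)}_{({}^{-}\alpha\,|\,{}^{-}\beta)/(\gamma\,|\,\delta)}(X)$; these produce the left side of \eqref{for:skew Giambelli} and its first right-hand term. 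For the two remaining ``mixed'' minors I would observe that the last ($h$-)column of $\xi^{1,\dots,N-1}_{2,\dots,N}(Z)$ meets only the $S$-rows, with entries $h^{(r+\gamma_q+1)}_{\alpha_i-\gamma_q}(X)$, and the last ($e$-)row of $\xi^{2,\dots,N}_{1,\dots,N-1}(Z)$ meets only the $S$-columns, with entries $e^{(r-\delta_q-1)}_{\beta_j-\delta_q}(X)$, thanks to the zero block; expanding along that column, resp.\ row, and identifying each resulting minor via Theorem~\ref{thm:Giambelli nnsy} as $(-1)^{q-1}S^{(r)}_{(\alpha^{\vee}(i)\,|\,{}^{-}\beta)/(\gamma^{-}\,|\,\delta^{-})}(X)$, resp.\ $(-1)^{q-1}S^{(r)}_{({}^{-}\alpha\,|\,\beta^{\vee}(j))/(\gamma^{-}\,|\,\delta^{-})}(X)$, presents the product $\xi^{1,\dots,N-1}_{2,\dots,N}(Z)\,\xi^{2,\dots,N}_{1,\dots,N-1}(Z)$ as the double sum in \eqref{for:skew Giambelli}, the cofactor signs collapsing to $(-1)^{i+j}$. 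Substituting into \eqref{for:Desnanot-Jacobi formula} and multiplying through by $-1$ then gives \eqref{for:skew Giambelli}.

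The part that needs genuine care, and which I expect to be the main obstacle, is the sign bookkeeping: one must check that each contiguous submatrix of $Z$ is \emph{literally} the Giambelli matrix of the stated trimmed shape (this rests only on the strict monotonicity of $\alpha,\beta,\gamma,\delta$ being preserved under deletion of a first or last entry), and one must track the cofactor signs through the two expansions so that they telescope to the claimed $(-1)^{i+j}$ and survive the final rearrangement; this, together with the analogous but shorter computation for part~(1), is the bulk of the work. A secondary technical point is the treatment of degenerate inputs: when a trimmed shape such as $({}^{-}\alpha\,|\,{}^{-}\beta)/(\gamma\,|\,\delta)$ fails to be an honest skew partition (for instance when $q=p$, or when containment is lost after trimming), one verifies that the corresponding minor of $Z$ also vanishes --- by a rank argument on the then-oversized zero block, or from $h^{(\ast)}_{d}=e^{(\ast)}_{d}=0$ for $d<0$ together with the unitriangular structure of the coefficient matrix $H=[h^{(i)}_{j-i}(X)]$ --- so that the stated identities persist under the convention $S^{(r)}_{\lambda/\mu}(X)=0$.
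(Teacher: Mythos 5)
Your proposal is correct and follows essentially the same route as the paper: the authors likewise apply the Desnanot--Jacobi identity \eqref{for:Desnanot-Jacobi formula} to $Z=G^{(r)}_{\lambda/\mu}(X)$, identify the four corner minors as Giambelli determinants of the trimmed shapes via Theorem~\ref{thm:Giambelli nnsy} (with your part~(1) being exactly their $\mu=\varnothing$ case), and obtain the mixed minors by cofactor expansion along the last $h$-column and $e$-row, with the same signs $(-1)^{p+i}$, $(-1)^{p+j}$ collapsing to $(-1)^{i+j}$. Your extra remarks on degenerate trimmed shapes are a harmless refinement not spelled out in the paper.
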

\begin{proof}
These formulas are also direct consequences of Theorem~\ref{djformula}.
Actually, applying \eqref{for:Desnanot-Jacobi formula} 
to the case $Z=G^{(r)}_{\lambda/\mu}(X)$ together with the following expressions of minor determinants derived from Theorem~\ref{thm:Giambelli nnsy},
we have the desired formulas:
\begin{align*}
\xi^{1,\ldots,p+q}_{1,\ldots,p+q}(Z)
&=(-1)^qS_{(\alpha\,|\,\beta)/(\gamma\,|\,\delta)}(X),\\
\xi^{2,\ldots,p+q-1}_{2,\ldots,p+q-1}(Z)
&=
\begin{cases}
S_{({}^{-}\alpha^{-}\,|\,{}^{-}\beta^{-})}^{(r)}(X)
& (\mu=\varnothing),\\
(-1)^{q-1}S_{({}^{-}\alpha\,|\,{}^{-}\beta)/(\gamma^{-}\,|\,\delta^{-})}^{(r)}(X) & (\mu\ne \varnothing),
\end{cases}
\\
\xi^{1,\ldots,p+q-1}_{1,\ldots,p+q-1}(Z)
&=
\begin{cases}
S_{(\alpha^{-}\,|\,\beta^{-})}^{(r)}(X) & (\mu=\varnothing),\\
(-1)^{q-1}S_{(\alpha\,|\,\beta)/(\gamma^{-}\,|\,\delta^{-})}^{(r)}(X) & (\mu\ne \varnothing),\\
\end{cases}
\\
\xi^{2,\ldots,p+q}_{2,\ldots,p+q}(Z)
&=(-1)^qS_{({}^{-}\alpha\,|\,{}^{-}\beta)/(\gamma\,|\,\delta)}^{(r)}(X),\\
\xi^{2,\ldots,p+q}_{1,\ldots,p+q-1}(Z)
&=\begin{cases}
S_{({}^{-}\alpha\,|\,\beta^{-})}^{(r)}(X) & (\mu=\varnothing),\\[5pt]
\displaystyle{\sum^{p}_{j=1}(-1)^{p+j}e_{\beta_j-\delta_q}^{(r-\delta_q-1)}(X)S_{({}^{-}\alpha\,|\,\beta^{\vee}(j))/(\gamma^{-}\,|\,\delta^{-})}^{(r)}(X)} & (\mu\ne \varnothing),
\end{cases}
\\
\xi^{1,\ldots,p+q-1}_{2,\ldots,p+q}(Z)
&=\begin{cases}
S_{(\alpha^{-}\,|\,{}^{-}\beta)}^{(r)}(X) & (\mu=\varnothing),\\[5pt]
\displaystyle{\sum^{p}_{i=1}(-1)^{p+i}h_{\alpha_i-\gamma_q}^{(r+\gamma_q+1)}(X)S_{(\alpha^{\vee}(i)\,|\,{}^{-}\beta)/(\gamma^{-}\,|\,\delta^{-})}^{(r)}(X)} & (\mu\ne \varnothing).
\end{cases}
\end{align*}
Notice that,
in the last two cases with $\mu\ne\varnothing$, 
we have used cofactor expansions.
\end{proof}

\begin{example}
When $\lambda=(5,5,5,3)=(4,3,2\,|\,3,2,1)$, we have from \eqref{for:non-skew Giambelli}
\begin{align*}
\ytableausetup{boxsize=12pt,aligntableaux=center}
\scriptsize
\begin{ytableau}
0&1&2&3&4\\
\mb{1} & 0 & 1 & 2 &3\\
\mb{2} & \mb{1} & 0 &1 &2\\
\mb{3} & \mb{2} & \mb{1} 
\end{ytableau}
\cdot 
\begin{ytableau}
0 & 1 & 2 & 3\\
\mb{1} \\
\mb{2}
\end{ytableau}
\,=\,
\begin{ytableau}
0&1&2&3&4\\
\mb{1} & 0 & 1 & 2 & 3\\
\mb{2} & \mb{1}  \\
\mb{3} & \mb{2} 
\end{ytableau}
\cdot 
\begin{ytableau}
0 & 1 & 2 & 3\\
 \mb{1} & 0 &1 & 2\\
\mb{2} & \mb{1} \\
\end{ytableau}
-
\begin{ytableau}
0&1&2&3\\
\mb{1} & 0 &1&2 \\
\mb{2} & \mb{1} \\
\mb{3} & \mb{2}  
\end{ytableau}
\cdot 
\begin{ytableau}
0&1&2&3&4\\
\mb{1} & 0 & 1 & 2 & 3\\
\mb{2} & \mb{1}\\
\end{ytableau}
\,.
\end{align*}
Moreover, when $\lambda/\mu=(5,5,5,3)/(3,2)=(4,3,2\,|\,3,2,1)/(2,0\,|\,1,0)$, 
we have from \eqref{for:skew Giambelli} 
\begin{align*}
\ytableausetup{boxsize=12pt,aligntableaux=center}
&
{\scriptsize
\begin{ytableau}
\none &\none &\none &3&4\\
\none & \none & 1 & 2 &3\\
\mb{2} & \mb{1} & 0 &1 &2\\
\mb{3} & \mb{2} & \mb{1}  
\end{ytableau}
}
\cdot 
{\scriptsize
\begin{ytableau}
\none & \none & \none & 3\\
\none & 0 & 1 & 2 \\
\mb{2} & \mb{1}
\end{ytableau}
}
\,=\,
{\scriptsize
\begin{ytableau}
\none &\none &\none &3&4\\
\none & 0 & 1 & 2 & 3\\
\mb{2} & \mb{1} & 0 & 1 & 2 \\
\mb{3} & \mb{2} & \mb{1} 
\end{ytableau}
}
\cdot 
{\scriptsize
\begin{ytableau}
\none & \none & \none & 3\\
\none & \none &1 & 2\\
\mb{2} & \mb{1} \\
\end{ytableau}
}
+\sum^{3}_{i,j=1}(-1)^{i+j}g_{ij},
\\
\end{align*}
where
\begin{align*}
\ytableausetup{boxsize=12pt,aligntableaux=center}
g_{11}
&={\scriptsize
\begin{ytableau}
1 & 2 & 3 & 4\\
\end{ytableau}
}
\cdot 
{\scriptsize
\begin{ytableau}
\mb{1} \\
\mb{2} \\
\mb{3}
\end{ytableau}
}
\cdot 
{\scriptsize
\begin{ytableau}
\none & \none & \none & 3\\
\none & 0 &1 & 2\\
\mb{2} & \mb{1} \\
\end{ytableau}
}
\cdot 
{\scriptsize
\begin{ytableau}
\none & \none & \none & 3\\
\none & 0 &1 & 2\\
\mb{2} & \mb{1} \\
\end{ytableau}
}\,,&
g_{12}
&={\scriptsize
\begin{ytableau}
1 & 2 & 3 & 4\\
\end{ytableau}
}
\cdot 
{\scriptsize
\begin{ytableau}
\mb{1} \\
\mb{2} 
\end{ytableau}
}
\cdot 
{\scriptsize
\begin{ytableau}
\none & \none & \none & 3\\
\none & 0 &1 & 2\\
\mb{2} & \mb{1} \\
\end{ytableau}
}
\cdot 
{\scriptsize
\begin{ytableau}
\none & \none & \none & 3\\
\none & 0 &1 & 2\\
\mb{2} & \mb{1} \\
\mb{3} 
\end{ytableau}
}\,,\\
g_{13}
&={\scriptsize
\begin{ytableau}
1 & 2 & 3 & 4\\
\end{ytableau}
}
\cdot 
{\scriptsize
\begin{ytableau}
\mb{1} 
\end{ytableau}
}
\cdot 
{\scriptsize
\begin{ytableau}
\none & \none & \none & 3\\
\none & 0 &1 & 2\\
\mb{2} & \mb{1} \\
\end{ytableau}
}
\cdot 
{\scriptsize
\begin{ytableau}
\none & \none & \none & 3\\
\none & 0 &1 & 2\\
\mb{2} & \mb{1} \\
\mb{3} & \mb{2}
\end{ytableau}
}\,,\\
g_{21}
&={\scriptsize
\begin{ytableau}
1 & 2 & 3 \\
\end{ytableau}
}
\cdot 
{\scriptsize
\begin{ytableau}
\mb{1} \\
\mb{2} \\
\mb{3} 
\end{ytableau}
}
\cdot 
{\scriptsize
\begin{ytableau}
\none & \none & \none & 3&4\\
\none & 0 &1 & 2\\
\mb{2} & \mb{1} \\
\end{ytableau}
}
\cdot 
{\scriptsize
\begin{ytableau}
\none & \none & \none & 3\\
\none & 0 &1 & 2\\
\mb{2} & \mb{1} \\
\end{ytableau}
}\,,&
g_{22}
&={\scriptsize
\begin{ytableau}
1 & 2 & 3 \\
\end{ytableau}
}
\cdot 
{\scriptsize
\begin{ytableau}
\mb{1} \\
\mb{2} 
\end{ytableau}
}
\cdot 
{\scriptsize
\begin{ytableau}
\none & \none & \none & 3&4\\
\none & 0 &1 & 2\\
\mb{2} & \mb{1} \\
\end{ytableau}
}
\cdot 
{\scriptsize
\begin{ytableau}
\none & \none & \none & 3\\
\none & 0 &1 & 2\\
\mb{2} & \mb{1} \\
\mb{3} 
\end{ytableau}
}\,,\\
g_{23}
&={\scriptsize
\begin{ytableau}
1 & 2 & 3 \\
\end{ytableau}
}
\cdot 
{\scriptsize
\begin{ytableau}
\mb{1} 
\end{ytableau}
}
\cdot 
{\scriptsize
\begin{ytableau}
\none & \none & \none & 3&4\\
\none & 0 &1 & 2\\
\mb{2} & \mb{1} \\
\end{ytableau}
}
\cdot 
{\scriptsize
\begin{ytableau}
\none & \none & \none & 3\\
\none & 0 &1 & 2\\
\mb{2} & \mb{1} \\
\mb{3} & \mb{2} 
\end{ytableau}
}\,,\\
g_{31}
&={\scriptsize
\begin{ytableau}
1 & 2 
\end{ytableau}
}
\cdot 
{\scriptsize
\begin{ytableau}
\mb{1} \\
\mb{2} \\
\mb{3} 
\end{ytableau}
}
\cdot 
{\scriptsize
\begin{ytableau}
\none & \none & \none & 3&4\\
\none & 0 &1 & 2 &3\\
\mb{2} & \mb{1} \\
\end{ytableau}
}
\cdot 
{\scriptsize
\begin{ytableau}
\none & \none & \none & 3\\
\none & 0 &1 & 2\\
\mb{2} & \mb{1} \\
\end{ytableau}
}\,,&
g_{32}
&={\scriptsize
\begin{ytableau}
1 & 2 \\
\end{ytableau}
}
\cdot 
{\scriptsize
\begin{ytableau}
\mb{1} \\
\mb{2} 
\end{ytableau}
}
\cdot 
{\scriptsize
\begin{ytableau}
\none & \none & \none & 3&4\\
\none & 0 &1 & 2&3\\
\mb{2} & \mb{1} \\
\end{ytableau}
}
\cdot 
{\scriptsize
\begin{ytableau}
\none & \none & \none & 3\\
\none & 0 &1 & 2\\
\mb{2} & \mb{1} \\
\mb{3} 
\end{ytableau}
}\,,\\
g_{33}
&={\scriptsize
\begin{ytableau}
1 & 2 \\
\end{ytableau}
}
\cdot 
{\scriptsize
\begin{ytableau}
\mb{1} 
\end{ytableau}
}
\cdot 
{\scriptsize
\begin{ytableau}
\none & \none & \none & 3&4\\
\none & 0 &1 & 2&3\\
\mb{2} & \mb{1} \\
\end{ytableau}
}
\cdot 
{\scriptsize
\begin{ytableau}
\none & \none & \none & 3\\
\none & 0 &1 & 2\\
\mb{2} & \mb{1} \\
\mb{3} & \mb{2}
\end{ytableau}
}\,.
\end{align*}
\end{example}


\section{Applications of the Pl\"{u}cker relations
}
\label{pformula}

In this section,
we derive several quadratic relations for $S^{(r)}_{\lambda}(X)$ by applying the Pl\"{u}cker relations for determinants, which are described as follows.
Let $Z$ be a $2n$-by-$n$ matrix whose rows are indexed by $1,\ldots,n,1',\ldots,n'$ and columns by $1,\ldots,n$.
For $1\leq \ell\leq n$, 
take a row index $(t_1,\ldots,t_\ell)$ satisfying 
$1\leq t_1<\cdots<t_\ell\leq n$.
Then, the Pl\"ucker relations (fixing the rows $1',\ldots,\widehat{t'_1},\ldots,\widehat{t'_\ell},\ldots,n'$)
state that
\begin{equation}
\label{for:Plucker}
\xi^{1,\ldots,n}_{1,\ldots,n}(Z)
\xi^{1',\ldots,n'}_{1,\ldots,n}(Z)
=\sum_{1\leq s_1<\cdots<s_\ell\leq n}
\sigma_{RS}(
\xi^{1,\ldots,s_1,\ldots,s_\ell,\ldots,n}_
{1,\ldots,n}(Z)
\xi^{1',\ldots,t'_1,\ldots,t'_\ell,\ldots,n'}_
{1,\ldots,n}(Z)),    
\end{equation}
where $\sigma_{RS}$ exchanges the row $s_i$ with $t'_i$ for all $1\le i\le \ell$ before evaluating the determinants. 
For example, when $n=3$, $\ell=2$ and $(t_1,t_2)=(1,3)$, we have 
\begin{align*}
\xi^{1,2,3}_{1,2,3}(Z)
\xi^{1',2',3'}_{1,2,3}(Z)
=
\xi^{1',3',3}_{1,2,3}(Z)\xi^{1,2',2}_{1,2,3}(Z)
+\xi^{1',2,3'}_{1,2,3}(Z)\xi^{1,2',3}_{1,2,3}(Z)
+\xi^{1,1',3'}_{1,2,3}(Z)\xi^{2,2',3}_{1,2,3}(Z).
\end{align*}
Remark that we could define
Pl\"ucker relations more generally, for the minors of a matrix of any size, but they would be a specialization of \eqref{for:Plucker}.



We first give a generalization of the results obtained by Kleber \cite{k01} for Schur functions. 
To state the results, 
we introduce the combinatorial terminology of Young diagrams.
For a partition $\lambda$, 
the outside border of $\lambda$ is the strip whose cells contain all the cells not in $D(\lambda)$ but immediately below and to the right of those in $D(\lambda)$.
On the other hand, the inside border of $\lambda$ is the strip whose cells contain all the right-most or the bottom-most cells in $D(\lambda)$. 
We denote the outside and inside borders of $\lambda$ by
$\OB(\lambda)$ and $\IB(\lambda)$, respectively.
For $u,v\in \OB(\lambda)$, if the diagram obtained by adding the substrip of $\OB(\lambda)$ starting from $u$ and ending at $v$ to $D(\lambda)$ is a Young diagram, then we denote the partition by $\add^{u}_{v}(\lambda)$.
Similarly, for $u,v\in \IB(\lambda)$, if the diagram obtained by removing the substrip of $\IB(\lambda)$ starting from $u$ and ending at $v$ from $D(\lambda)$ is a Young diagram, then we denote the partition by $\rem^{u}_{v}(\lambda)$. 
Assume that $\lambda$ has $n$ corners.
Then, it can be written as  
$\lambda=(m_1^{r_1}m_2^{r_2-r_1}\cdots m_n^{r_n-r_{n-1}})$
by using integers $m_1>m_2>\cdots>m_n>m_{n+1}=0$ and $0<r_1<r_2<\cdots<r_n$.
This implies that $C(\lambda)=\{(r_1,m_1),\ldots,(r_n,m_n)\}$.
For $1\le p\le q\le n$, put 
\[
\add^{p}_{q}:=\add^{(r_p+1,m_p)}_{(r_q+1,m_{q+1}+1)}, \quad  
\rem^{p}_{q}:=\rem^{(r_p,m_p)}_{(r_q,m_{q+1}+1)}.
\]
Moreover, for $1\le p_1<\cdots<p_{t}\le q_t<\cdots<q_1\le n$, 
define 
\begin{align}
\label{def:iterated add rem}
\add^{p_1,\ldots,p_t}_{q_1,\ldots,q_t}
=\add^{p_1}_{q_1}\circ \cdots \circ \add^{p_t}_{q_t},
\quad
\rem^{p_1,\ldots,p_t}_{q_1,\ldots,q_t}
=\rem^{p_1}_{q_1}\circ \cdots \circ \rem^{p_t}_{q_t}.
\end{align}

\begin{example}
Let $\lambda=(5,4,2)$. 
Then, we have 
$C(\lambda)=\{(1,5),(2,4),(3,3)\}$ and 
\begin{align*}
\OB(\lambda)
&=\{(1,6),(2,6),(2,5),(3,5),(3,4),(3,3),(4,3),(4,2),(4,1)\},\\
\IB(\lambda)
&=\{(1,5),(1,4),(2,4),(2,3),(2,2),(3,2),(3,1)\}.    
\end{align*}
For the adding and removing  operators,
we have, for example,
\begin{align*}
\ytableausetup{boxsize=8pt,aligntableaux=center}
\add^{1,2}_{3,2}
\left(\ 
\begin{ytableau}
 \ & \ & \ & \ & \ \\
 \ & \ & \ & \ \\
 \ & \ 
\end{ytableau}
\ \right)
&=
\add^{(2,5)}_{(4,1)}\left(
\add^{(3,4)}_{(3,3)}
\left(\ \ \ \ 
\begin{tikzpicture}[thick,baseline=38pt]
\draw[line width = 8.52, gray] (1.1,1.3) -- (0.5,1.3);
\end{tikzpicture}
\hspace{-38.5pt}
\begin{ytableau}
 \ & \ & \ & \ & \ \\
 \ & \ & \ & \ & \none[\star] \\
 \ & \ & \none[\bullet] & \none[\bullet] \\
 \none[\star]
\end{ytableau}
\ \right)
\right)
=
\add^{(2,5)}_{(4,1)}\left(
\begin{tikzpicture}[thick,baseline=47pt]
\draw[line width = 8.52, gray] (2.0,1.3) -- (0.5,1.3);
\draw[line width = 8.52, gray] (1.7,1.6) -- (2.0,1.6);
\draw[line width = 8.52, gray] (1.7,1.9) -- (2.0,1.9);
\end{tikzpicture}
\hspace{-47pt}
\begin{ytableau}
 \ & \ & \ & \ & \ \\
 \ & \ & \ & \ & \none[\star] \\
 \ & \ & \ & \ \\
 \none[\star]
\end{ytableau}
\ \right)
=
\begin{ytableau}
 \ & \ & \ & \ & \ \\
 \ & \ & \ & \ & \ \\
 \ & \ & \ & \ & \ \\
 \ & \ & \ & \ & \
\end{ytableau}
 \,,
\\
\rem^{1,2}_{3,2}
\left(\ 
\begin{ytableau}
 \ & \ & \ & \ & \ \\
 \ & \ & \ & \ \\
 \ & \ 
\end{ytableau}
\ \right)
&=
\rem^{(1,5)}_{(3,1)}\left(
\rem^{(2,4)}_{(2,3)}
\left(\ 
\begin{ytableau}
 \ & \ & \ & \ & \star  \\
 \ & \ & *(gray)\bullet & *(gray)\bullet \\
 \star & \ 
\end{ytableau}
\ \right)
\right)
=
\rem^{(1,4)}_{(5,1)}\left(\ 
\begin{ytableau}
 \ & *(gray) & *(gray) & *(gray) & *(gray)\star  \\
 \ & *(gray) \\
 *(gray)\star & *(gray) 
\end{ytableau}
\ \right)
=
\begin{ytableau}
 \ \\
 \ 
\end{ytableau}
 \,.
\end{align*}
\end{example}

For a partition $\lambda=(\lambda_1,\ldots,\lambda_k)$, $a\ge 0$ and $1\le \ell\le k$, 
we denote by $\lambda\pm (a^{\ell})$ the partition $\nu=(\nu_1,\ldots,\nu_k)$
given by $\nu_i=\lambda_i\pm a$ for $1\le i\le \ell$ and $\nu_i=\lambda_i$ for $i>\ell$.
Notice that $\lambda-(a^{\ell})$ is defined only if 
$\lambda_i>a$ for $1\le i\le \ell$ and $\lambda_{\ell}-a\ge \lambda_{\ell+1}$.
Now, the quadratic relations for $s_{\lambda}$ obtained by Kleber \cite{k01} are given as follows.

\begin{theorem}[{\cite[Theorem 4.2]{k01}}]
\label{thm:kPlucker}
Let $\lambda$ be a partition with $n$ corners.
Take $1\le d\le n$ and denote by $\ell$ 
the height of the $d$-th shortest column of $\lambda$.
Then, we have  
\begin{equation}
s_{\lambda}s_{\lambda}
=s_{\lambda-(1^{\ell})}s_{\lambda+(1^{\ell})}
+\sum^{\min\{d,n-d+1\}}_{t=1}(-1)^{t-1}
\sum_{\substack{1\le p_1<\cdots<p_t\le d \\ d\le q_t<\cdots<q_1\le n}}
s_{\add^{p_1,\ldots,p_t}_{q_1,\ldots,q_t}(\lambda)}
s_{\rem^{p_1,\ldots,p_t}_{q_1,\ldots,q_t}(\lambda)}.
\end{equation}
\end{theorem}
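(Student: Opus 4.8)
The plan is to recover Kleber's Theorem~\ref{thm:kPlucker} as the specialization of a more general Pl\"ucker-type identity for $S^{(r)}_\lambda(X)$, but since the statement here is precisely Kleber's original result for classical $s_\lambda$, I would give the proof directly from the Pl\"ucker relations \eqref{for:Plucker} combined with the Jacobi--Trudi formula. First I would fix the partition $\lambda$ with $n$ corners, write $\lambda=(m_1^{r_1}m_2^{r_2-r_1}\cdots m_n^{r_n-r_{n-1}})$, and set $k=r_n=\ell(\lambda)$. The Jacobi--Trudi formula expresses $s_\lambda=\det[h_{\lambda_i-i+j}]_{1\le i,j\le k}$ as a maximal minor of the (doubly infinite, but effectively $2k$-by-$k$) matrix $H$ whose row $i$ for $i=1,\ldots,k$ is the vector of $h_{\lambda_i-i+j}$ and whose "primed" rows $1',\ldots,k'$ encode a second copy of the same data shifted so that $\xi^{1',\ldots,k'}_{1,\ldots,k}(H)$ is again $s_\lambda$. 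Concretely the natural choice is to take row $i'$ to be the $h$-row for the partition entry read off from the same $\lambda$, so that both the unprimed and primed maximal minors equal $s_\lambda$; this is what makes the left-hand side of \eqref{for:Plucker} become $s_\lambda s_\lambda$.

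Next I would choose the row index set $(t_1,\ldots,t_\ell)$ in \eqref{for:Plucker} to consist of the rows corresponding to the corner positions $r_1<\cdots<r_d$ truncated appropriately — more precisely, I take the $\ell$ rows dictated by the "$d$-th shortest column" bookkeeping, so that $\ell$ here is the height of the $d$-th shortest column as in the statement. The Pl\"ucker expansion then produces a sum over subsets $\{s_1<\cdots<s_\ell\}$ of row indices; the key combinatorial step is to identify which subsets give a nonzero contribution. Because $H$ is built from the $h$-sequence, swapping row $s_i\leftrightarrow t_i'$ and then sorting rows back into increasing order either (i) produces a determinant with two equal rows, hence zero, or (ii) produces $\pm s_{\mu}s_{\nu}$ where $\mu$ is obtained from $\lambda$ by adding certain border strips and $\nu$ by removing the complementary border strips. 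The bulk of the work is the bijection between the surviving index-subsets and the pairs of iterated operators $(\add^{p_1,\ldots,p_t}_{q_1,\ldots,q_t},\rem^{p_1,\ldots,p_t}_{q_1,\ldots,q_t})$ with $1\le p_1<\cdots<p_t\le d\le q_t<\cdots<q_1\le n$, together with the verification that the Pl\"ucker sign $\sigma_{RS}$ matches $(-1)^{t-1}$ and that the "$t=0$" (or trivial-swap) term is exactly $s_{\lambda-(1^\ell)}s_{\lambda+(1^\ell)}$.

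I would organize the surviving-terms analysis as follows: a chosen subset $S=\{s_1<\cdots<s_\ell\}$ survives precisely when its "profile" relative to the corner rows $r_1,\ldots,r_n$ alternates between the primed and unprimed blocks in a controlled way; grouping the alternations into $t$ maximal runs yields the indices $p_1<\cdots<p_t$ (where a run of additions begins) and $q_1>\cdots>q_t$ (where it ends), and one checks the containment constraints $p_j\le d\le q_j$ come for free from the choice $(t_1,\ldots,t_\ell)$ sitting at the $d$-th corner. The sign $(-1)^{t-1}$ is then a straightforward (if fiddly) count of transpositions needed to re-sort, done by induction on $t$. The main obstacle I anticipate is exactly this sign-and-bijection bookkeeping: making the border-strip description $\add^{p_1,\ldots,p_t}_{q_1,\ldots,q_t}(\lambda)$ rigorously coincide with the raw row-swap-and-sort operation on $H$, and confirming that all and only the asserted terms appear. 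Everything else — invoking Jacobi--Trudi, invoking \eqref{for:Plucker}, discarding zero determinants — is routine. I would present the combinatorial core as a lemma identifying, for each valid subset, the resulting pair of partitions and the sign, and then the theorem follows by summing.

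(In the generalized form for $S^{(r)}_\lambda(X)$ proved later as Theorem~\ref{thm:9thPlucker}, the same scheme applies verbatim once one uses the Jacobi--Trudi formula \eqref{for:9thJT-nnsy} and tracks the shifts of the superscript $r$ through each $\add$ and $\rem$; the only extra care needed is that $h^{(r)}_d(X)$ carries a superscript, so the row-matching in $H=[h^{(i)}_{j-i}(X)]$ must be done with the correct index $r$, but this does not affect the sign computation.)
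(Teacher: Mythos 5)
Your overall strategy (Jacobi--Trudi plus the Pl\"ucker relations \eqref{for:Plucker}) is the right family of ideas, but the concrete setup you propose does not work, and the missing ingredient is precisely the key idea of Kleber's proof (which the paper mimics for Theorem~\ref{thm:9thPlucker}). You start from \emph{two copies of the same} Jacobi--Trudi matrix of $\lambda$, so that the Pl\"ucker left-hand side is $s_\lambda s_\lambda$. With two identical $k\times k$ blocks stacked into a $2k\times k$ matrix, every nontrivial exchange in \eqref{for:Plucker} moves a row $t_i$ of the second block into the first block, where an identical row is still present; hence every exchanged determinant has a repeated row and vanishes, and the only surviving term is the trivial one, giving the tautology $s_\lambda s_\lambda=s_\lambda s_\lambda$. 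In particular your claim that the ``trivial-swap'' term equals $s_{\lambda-(1^{\ell})}s_{\lambda+(1^{\ell})}$ has no mechanism behind it, and no exchange between two square blocks of the same size can ever change the number of parts, whereas $\add^{p_1,\ldots,p_t}_{q_1,\ldots,q_t}(\lambda)$ and $\rem^{p_1,\ldots,p_t}_{q_1,\ldots,q_t}(\lambda)$ in general have strictly more, respectively fewer, rows than $\lambda$. The vague ``second copy of the same data shifted so that the minor is again $s_\lambda$'' cannot be realized in the classical setting: shifting the $h$-rows by a column changes the partition being represented.

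The working construction (see the proof of Theorem~\ref{thm:9thPlucker}, which follows Kleber) is to apply \eqref{for:Plucker} to the $(2\rho+2)\times(\rho+1)$ matrix $H_{\lambda-(1^{\ell})}\,\square\,H_{\lambda+(1^{\ell})}$: the two blocks are the Jacobi--Trudi matrices of the \emph{different} partitions $\lambda-(1^{\ell})$ and $\lambda+(1^{\ell})$, offset from each other by one column, and augmented by the two indicator rows $L$ and $R$. Then the Pl\"ucker left-hand side is $s_{\lambda-(1^{\ell})}s_{\lambda+(1^{\ell})}$, the product $s_\lambda s_\lambda$ appears as the distinguished exchange term (the one swapping $L$ and $R$), and it is exactly the one-column offset together with the rows $L,R$ that lets the remaining exchanges reinterpret rows with a shift and change the number of parts, producing the $\add/\rem$ pairs with the sign $(-1)^{t-1}$ after the vanishing analysis for repeated rows. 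Since your proposal neither introduces the auxiliary rows nor the column offset, and orients the identity the wrong way around, the ``bijection and sign bookkeeping'' you defer is not the real obstacle; the combinatorial core you promise as a lemma cannot be set up from your matrix at all. This is a genuine gap, not a presentational one.
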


In this section, 
we generalize this theorem to $S^{(r)}_{\lambda}(X)$ as: 
 
\begin{theorem}
\label{thm:9thPlucker}
Let $\lambda$ be a partition having $n$ corners.
Take $1\le d\le n$. 
\begin{enumerate}
\item  
Denote the $d$-th shortest column height of $\lambda$ as $\ell$.
Then, we have  
\begin{align}
\label{for:9thPlucker}
\begin{split}
&S_{\lambda}^{(r)}(X)S_{\lambda}^{(r-1)}(X)
=S_{\lambda-(1^{\ell})}^{(r)}(X)
S_{\lambda+(1^{\ell})}^{(r-1)}(X)
\\
&\quad +\sum^{\min\{d,n-d+1\}}_{t=1}(-1)^{t-1}
\sum_{\substack{1\le p_1<\cdots<p_t\le d \\ d\le q_t<\cdots<q_1\le n}}
S_{\add^{p_1,\ldots,p_t}_{q_1,\ldots,q_t}(\lambda)}^{(r)}(X)
S_{\rem^{p_1,\ldots,p_t}_{q_1,\ldots,q_t}(\lambda)}^{(r-1)}(X).
\end{split}
\end{align}
\item 
Denote the $d$-th shortest row length of $\lambda$ as $\ell$.
Then, we have  
\begin{align}
\label{for:9thDualPlucker}
\begin{split}
&S_{\lambda}^{(r)}(X)S_{\lambda}^{(r+1)}(X)
=S_{(\lambda'-(1^{\ell}))'}^{(r)}(X)
S_{(\lambda'+(1^{\ell}))'}^{(r+1)}(X)\\
&\quad +\sum^{\min\{d,n-d+1\}}_{t=1}(-1)^{t-1}
\sum_{\substack{1\le p_1<\cdots<p_t\le d \\ d\le q_t<\cdots<q_1\le n}}
S_{(\add^{p_1,\ldots,p_t}_{q_1,\ldots,q_t}(\lambda'))'}^{(r)}(X)
S_{(\rem^{p_1,\ldots,p_t}_{q_1,\ldots,q_t}(\lambda'))'}^{(r+1)}(X).
\end{split}
\end{align}
\end{enumerate}
\end{theorem}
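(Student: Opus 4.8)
The plan is to deduce Theorem~\ref{thm:9thPlucker} from the Pl\"ucker relation \eqref{for:Plucker} applied to a suitable $2n$-by-$n$ matrix built from two copies of the Weyl-type quotient \eqref{for:Weyl formula}, in exact parallel with Kleber's argument \cite{k01}, which is the $X=\text{Vandermonde}$ specialization. Since \eqref{for:9thDualPlucker} follows from \eqref{for:9thPlucker} by the standard conjugation/duality (replacing $H$ by $E$, i.e.\ using the dual Jacobi-Trudi formula \eqref{for:9thDJT-nnsy} in place of \eqref{for:9thJT-nnsy} and $r\mapsto r$, $r-1\mapsto r+1$), I would treat part (1) carefully and then indicate that (2) is obtained by applying (1) to $\lambda'$ with the roles of $h^{(r)}$ and $e^{(r)}$ interchanged and the shift convention $r\pm1$ swapped. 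So the core is part (1).

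First I would set up the matrix. Recall from \eqref{for:Weyl formula} that $S^{(r)}_{\lambda}(X)$ is, up to the fixed normalizing factor $\xi^{1,\ldots,r}_{1,\ldots,r}(X)$, the maximal minor $\xi^{1,\ldots,r}_{j_1,\ldots,j_r}(X)$ where $J=\{j_1<\cdots<j_r\}$ is the Maya diagram of $\lambda$; more precisely I want to work with the $r$-by-$N$ ``window'' $\Phi=X^{1,\ldots,r}_{1,\ldots,N}$ so that $S^{(r)}_\lambda(X)$ is (a normalization of) the $r\times r$ minor of $\Phi$ on the column set $J(\lambda)$. For the Pl\"ucker relation I need two row-blocks: one giving $S^{(r)}_\lambda(X)$ and one giving $S^{(r-1)}_\lambda(X)$. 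Following Kleber, I would form the $(r+r)$-by-$N$ matrix whose first block of rows is $X^{1,\ldots,r}$ and whose second block (indexed $1',\ldots,r'$) is $X^{1,\ldots,r-1}$ padded by a suitable extra row (e.g.\ a unit row $e_{r}$, or more symmetrically one takes the $r$-by-$N$ matrix whose last $r-1$ rows are rows $1,\ldots,r-1$ of $X$ and whose first row is chosen so that deleting it recovers the $(r-1)$-minor). The key combinatorial input is Kleber's identification: for a partition $\lambda$ with corners encoded by $(m_i,r_i)$, the column subsets obtained from $J(\lambda)$ by the prescribed row-swaps $\sigma_{RS}$ in \eqref{for:Plucker} are precisely the Maya diagrams of $\add^{p_1,\ldots,p_t}_{q_1,\ldots,q_t}(\lambda)$ and $\rem^{p_1,\ldots,p_t}_{q_1,\ldots,q_t}(\lambda)$, and the single term $t=0$ corresponds to $\lambda\pm(1^\ell)$. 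This dictionary is shape-theoretic and does not depend on $X$ at all, so I can import it verbatim from \cite[proof of Theorem~4.2]{k01}.

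The steps, in order: (i) express $S^{(r)}_\lambda(X)$ and $S^{(r-1)}_\lambda(X)$ as normalized maximal minors of explicit row-windows of $X$, checking that the two normalizing denominators are $\xi^{1,\ldots,r}_{1,\ldots,r}(X)$ and $\xi^{1,\ldots,r-1}_{1,\ldots,r-1}(X)$ and hence are common to both sides of \eqref{for:9thPlucker}; (ii) choose $\ell$, equivalently $d$, and apply \eqref{for:Plucker} with the row-index set $(t_1,\ldots,t_\ell)$ dictated by the $d$-th shortest column, to the $2r$-by-$r$ (or $2N$-by-$N$ after restricting to relevant columns) matrix from the previous paragraph; (iii) invoke Kleber's bijection to rewrite each surviving summand $\xi\cdot\xi$ on the right as $S^{(r)}_{\add(\lambda)}(X)\,S^{(r-1)}_{\rem(\lambda)}(X)$, discarding the terms whose column sets are not strictly increasing (these minors vanish) — this is exactly where the sum truncates at $t\le\min\{d,n-d+1\}$ and where the signs $(-1)^{t-1}$ and $\sigma_{RS}$ come from; (iv) track the overall sign: the normalization factors and the reordering of row indices $1',\ldots,r'$ must be shown to contribute a global sign that is the \emph{same} on every term, so that it cancels across the identity. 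Finally (v), deduce \eqref{for:9thDualPlucker} by running (i)--(iv) with $X_+$ replaced by $X_+^{-1}$ via \eqref{for:Xij_X-ij} and \eqref{def:9thSchur-nnsy}, equivalently by applying (1) to the conjugate partition.

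The main obstacle I anticipate is \emph{not} the Pl\"ucker bookkeeping, which is formal once the matrix is fixed, but rather step (i)--(iv) sign/normalization control: unlike the Vandermonde case in \cite{k01}, here the ``denominators'' $\xi^{1,\ldots,r}_{1,\ldots,r}(X)$ etc.\ are genuine rational functions, and I must be careful that the two factors on each side of \eqref{for:9thPlucker} pair a ``level-$r$'' object with a ``level-$(r-1)$'' object consistently, so that the product of normalizers is literally identical term-by-term. Concretely, the subtlety is that $\add^{p_1,\ldots,p_t}_{q_1,\ldots,q_t}(\lambda)$ still lies in the rectangle $(s^r)$ while $\rem^{p_1,\ldots,p_t}_{q_1,\ldots,q_t}(\lambda)$ must be read as a partition in $(s^{r-1})$ (one fewer row), and I need the Maya-diagram conventions to match across this shift; this is the one place where a short lemma on how $J(\lambda)$, $I^c$, $J^c$ transform under a row-swap is worth stating explicitly before quoting \eqref{for:Plucker}. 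I expect that once this alignment lemma is in place, the rest reduces to citing Kleber's combinatorial identification and the proof is short.
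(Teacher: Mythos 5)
Your high-level idea (realize the $S^{(r)}$'s as determinants and run Kleber's Pl\"ucker argument) is the right spirit, but the mechanism you describe does not work as stated, and the part you defer to ``importing Kleber verbatim'' is precisely where the actual work lies. The concrete obstruction is in your steps (i)--(ii): in the Weyl-formula picture \eqref{for:Weyl formula}, $S^{(r)}_{\lambda}(X)$ and $S^{(r-1)}_{\lambda}(X)$ are minors of \emph{different} row-windows ($X^{1,\ldots,r}$ and $X^{1,\ldots,r-1}$), of different sizes, and on \emph{different} column sets (the Maya diagrams at levels $r$ and $r-1$); likewise every add/rem term on the right of \eqref{for:9thPlucker} uses its own column set. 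The relation \eqref{for:Plucker}, however, is the row-exchange Pl\"ucker relation for a $2n$-by-$n$ matrix in which both factors are full minors on one fixed column set. So your proposed ``$2r$-by-$r$ (or $2N$-by-$N$) matrix'' cannot even be set up: restricting the stacked window to any single set of $r$ columns destroys at least one of the two factors. To salvage the bialternant route you would have to pass to column-exchange (general Grassmann--Pl\"ucker / flag-incidence) relations on a bordered window such as $[\,X^{1,\ldots,r}_{1,\ldots,N}\,|\,e_r\,]$, and then redo from scratch the dictionary between column exchanges of Maya diagrams and the operators $\add^{p_1,\ldots,p_t}_{q_1,\ldots,q_t}$, $\rem^{p_1,\ldots,p_t}_{q_1,\ldots,q_t}$, including the vanishing analysis that truncates the sum at $t\le\min\{d,n-d+1\}$ and the signs $(-1)^{t-1}$. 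None of this can be ``imported verbatim'' from \cite{k01}: Kleber's identification is formulated for row exchanges between two Jacobi--Trudi matrices (rows indexed by the parts of $\lambda$, columns fixed), not for Maya-diagram column sets, and that translation is the bulk of the proof, not a citation.

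For comparison, the paper's proof does not touch \eqref{for:Weyl formula} at all. It follows Kleber directly in the Jacobi--Trudi realization \eqref{for:9thJT-nnsy}: one forms the $(2\rho+2)$-by-$(\rho+1)$ matrix $H^{(r)}_{\lambda-(a^{\ell})}(X)\,\square\,H^{(r-1)}_{\lambda+(b^{\ell})}(X)$, whose two extra rows $L$ and $R$ and the shifted shapes $\lambda\mp(1^{\ell})$ are exactly the mechanism that makes the distinguished term and the add/rem terms emerge from a single application of \eqref{for:Plucker} (via \eqref{for:Plucker first step} with $a=b=1$); the remaining and substantial work is the case analysis of which exchanged minors vanish (equal rows coming from repeated parts) and the sign bookkeeping. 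Your sketch has no analogue of the $L,R$ rows or of the $\pm(1^{\ell})$ shift, and the difficulty you single out (the rational normalizing denominators) is in fact harmless --- every term in \eqref{for:9thPlucker} pairs a level-$r$ with a level-$(r-1)$ object, so the denominators are common and cancel --- while the genuinely delicate points (the cross-level exchange mechanism, the vanishing/truncation, the signs) are left unaddressed. Also note a small internal inconsistency: you propose to deduce (2) ``by replacing $H$ by $E$'', i.e.\ via the dual Jacobi--Trudi formula, although your proof of (1) never uses Jacobi--Trudi matrices; in the paper (2) is proved by rerunning the whole argument with $E^{(r)}_{\lambda/\mu}(X)$, which presupposes the Jacobi--Trudi setup you discarded.
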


To give a proof of Theorem~\ref{thm:9thPlucker}, 
we need to construct a new matrix from two square matrices having the same size:
For matrices $A=[a_{i,j}]_{1\le i,j\le n}$ and $B=[b_{i,j}]_{1\le i,j\le n}$ of size $n$,
define the matrix $A\,\square\,B=M=[M_{i,j}]$ 
having $2n+2$ rows indexed by $i=L,R,1,\ldots,n,1',\ldots,n'$, where we understand $R,L<1$, 
and $n+1$ columns indexed by $j=1,\ldots,n+1$ by 
\begin{align*}
M_{L,j}
&=\delta_{1,j},&
M_{R,j}
&=(-1)^n\delta_{n+1,j},&
M_{i,j}
&=a_{i,j} \ \ (1\le i\le n),&
M_{i',j}
&=b_{i,j-1} \ \ (1\le i\le n),
\end{align*}
where $a_{i,n+1}$ and $b_{i,0}$ are naturally defined from $A$ and $B$, respectively.
For example, 
\[
\left[
\begin{array}{cc}
a & b \\ c & d
\end{array}
\right]
\square 
\left[
\begin{array}{cc}
x & y \\ z & w
\end{array}
\right]
=
\begin{blockarray}{cccc}
\ & 1 & 2 & 3 \\
\begin{block}{c[ccc]}
  L & 1 & 0 & 0 \\
  \cline{2-4}
  R & 0 & 0 & (-1)^2 \\
  \cline{2-4}
  1 & a & b & \ast \\
  2 & c & d & \ast \\
  \cline{2-4}
  1' & \ast & x & y \\
  2' & \ast & z & w \\
\end{block}
\end{blockarray}
\,.
\]

\begin{proof}[Proof of Theorem~\ref{thm:9thPlucker}]
We mimic the proof of \cite[Theorem 4.2]{k01}.

Write $\lambda=(\lambda_1,\ldots,\lambda_{r_n})=(m_1^{r_1}\cdots m_d^{r_d-r_{d-1}}\cdots m_n^{r_n-r_{n-1}})$ as above, and $\rho=r_n$.
Note that in this setting $\ell=r_d$.
For $a,b\ge 0$, 
let $M=[M_{i,j}]=H^{(r)}_{\lambda-(a^{\ell})}(X)\,\square\,H^{(r-1)}_{\lambda+(b^{\ell})}(X)$, where $H^{(r)}_{\lambda}(X)$ is defined in Theorem \ref{thm:JT-nnsy}.
We have 
$M_{L,j}=\delta_{1,j}$,
$M_{R,j}=(-1)^{\rho}\delta_{\rho+1,j}$ and 
\begin{align*}
M_{i,j}
=
\begin{cases}
h_{\lambda_i-a-i+j}^{(r-j+1)}(X) & (1\le i\le \ell),\\
h_{\lambda_i-i+j}^{(r-j+1)}(X) & (\ell< i\le {\rho}),
\end{cases}
\quad 
M_{i',j}
=
\begin{cases}
h_{\lambda_i+b-i+(j-1)}^{(r-1-(j-1)+1)}(X) & (1\le i\le \ell),\\
h_{\lambda_i-i+(j-1)}^{(r-1-(j-1)+1)}(X) & (\ell< i\le {\rho}).
\end{cases}
\end{align*}
From now on, for simplicity, write 
$[i_0,i_1,\ldots,i_{\rho}]=\xi^{i_0,i_1,\ldots,i_{\rho}}_{1,\ldots,\rho+1}(M)$.
Applying the Pl\"{u}cker relations fixing the rows $1',2',\ldots,\ell'$ to $M$
together with the identities  
\begin{align*}
[R,1,\ldots ,\ell,\ell+1,\ldots,{\rho}]
[L,1',\ldots ,\ell',(\ell+1)',\ldots,\rho']
&=S_{\lambda-(a^{\ell})}^{(r)}(X)
S_{\lambda+(b^{\ell})}^{(r-1)}(X),\\
[L,1,\ldots,\ell,(\ell+1)',\ldots,\rho']
[R,1',\ldots ,\ell',\ell+1,\ldots,\rho]
&=S_{\lambda-(a-1)^{\ell}}^{(r-1)}(X)
S_{\lambda+(b-1)^{\ell}}^{(r)}(X)
\end{align*}
obtained from Theorem~\ref{thm:JT-nnsy}, 
we have  
\begin{align}
\label{for:Plucker first step}
 S_{\lambda-(a^{\ell})}^{(r)}(X)
S_{\lambda+(b^{\ell})}^{(r-1)}(X) 
&=S_{\lambda+((b-1)^{\ell})}^{(r)}(X)
S_{\lambda-((a-1)^{\ell})}^{(r-1)}(X)
+\sum_{\substack{\sigma=(s_L,s_{\ell+1},\cdots,s_{\rho}) \\
R\le s_L<s_{\ell+1}<\cdots<s_{\rho}\le \rho \\
\sigma \ne (R,\ell+1,\ldots,{\rho})}}
A_{\sigma}B_{\sigma},
\end{align}
where 
$A_{\sigma}:=[a_R,a_1,\ldots,a_{\rho}]$
and $B_{\sigma}:=[b_R,b_1,\ldots,b_{\rho}]$ 
are respectively defined by 
\[
a_i
=\begin{cases}
L & (i=s_L),\\    
j' & (\text{$i=s_j$ for some $\ell+1\le j\le \rho$}),\\
i & (\text{otherwise}),
\end{cases}
\quad 
b_i
=\begin{cases}
i' & (1\le i\le \ell),\\
s_i & (\text{otherwise}).
\end{cases}
\]
Notice that $s_L=R$ or $s_L\le\ell$,
and $s_j\le j$ for $\ell+1\le j\le \rho$. 

Now, take $a=b=1$.
We first observe when $A_{\sigma}B_{\sigma}$ vanishes.
\begin{itemize}
\item When $\lambda_i>\lambda_{i+1}=\cdots=\lambda_{i+u}>\lambda_{i+u+1}$ for some $i$ and $u\ge 2$ with $i+u\le \ell$,
we see that $M_{k,j}=M_{(k+1)',j}$ for $i+1\le k\le i+u-1$ and $1\le j\le \rho+1$. 
This means that $B_{\sigma}=0$ if  
$s_j\in\{i+1,\ldots,i+u-1\}$ for some $j\in\{L,\ell+1,\ldots,\rho\}$.

\item When  $\lambda_i>\lambda_{i+1}=\cdots=\lambda_{i+v}>\lambda_{i+v+1}$ for some $i$ and $v\ge 2$ with $i\ge \ell$,
we see that 
$M_{k,j}=M_{(k-1)',j}$ for $i+2\le k\le i+v$ and $1\le j\le \rho+1$. 
This means that 
$A_{\sigma}=0$ if $\{i+2,\ldots,i+v\}\not\subset\{s_{\ell+1},\ldots,s_{\rho}\}$.
\end{itemize}
Let 
$P=\{s\in \{s_L,s_{\ell+1},\ldots,s_{\rho}\}\,|\,1\le s\le \ell\}$ and 
$Q=\{i\in\{R,\ell+1,\ldots,\rho\}\,|\,a_i=i\}$.
Notice that $s_L\in P$ and $R\in Q$ 
if and only if $s_L\ne R$.
From the above observations, 
it suffices to consider $\sigma$ satisfying  
$P\subset\{r_1,\ldots,r_d\}$, 
$Q\subset\{r_d+1,\ldots,r_{n}+1\}$,
where we understand $R$ to be $r_n+1$, and    
\[
 \{i\in \{\ell+1,\ldots,\rho\}\,|\,\ell<s_i\le \rho\}
=   
\begin{cases}
 \{\ell+1,\ldots,\rho\}\setminus Q & (s_L=R),\\   
 \{\ell+1,\ldots,\rho\}\setminus (Q\setminus\{R\}) & (s_L\ne R).
\end{cases}
\]
Put $t=|Q|$. 
One sees that 
$|P|=t$, $1\le t\le \min\{d,n-d+1\}$
and, moreover, 
\[
A_{\sigma}B_{\sigma}=(-1)^tA_{P,Q}B_{P,Q},
\]
where 
$A_{P,Q}:=[a'_R,a'_1,\ldots,a'_{\rho}]$ 
and
$B_{P,Q}:=[b'_R,b'_1,\ldots,b'_{\rho}]$ 
are respectively defined by 
\[
a'_i=
\begin{cases}
L & (i=R), \\
i' & (\ell+1\le i\le \rho),\\
Q_{t+1-j} & (\text{$i=P_j$ for some $1\le j\le t$}), \\
i & (\text{otherwise}),
\end{cases}
\quad 
b'_i=
\begin{cases}
i' & (1\le i\le \ell),\\
P_{j} & (\text{$i=Q_{t+1-j}$ for some $1\le j\le t$}), \\
i & (\text{otherwise}).
\end{cases}
\]
Therefore,
writing 
$P=\{P_1=r_{p_1},\ldots,P_t=r_{p_t}\}$
with $1\le p_1<\cdots<p_t\le d$ and 
$Q=\{Q_t=r_{q_t}+1,\ldots,Q_1=r_{q_1}+1\}$
(resp. $Q=\{Q_t=R=r_{q_1}+1,Q_{t-1}=r_{q_t}+1,\ldots,Q_1=r_{q_2}+1\}$) if $s_L=R$ (resp. $s_L\ne R$) with 
$d\le q_t<\cdots<q_1\le n$, 
we have 
\begin{equation}
\label{for:Plucker second step}
\sum_{\substack{\sigma=(s_L,s_{\ell+1},\cdots,s_{\rho}) \\
R\le s_L<s_{\ell+1}<\cdots<s_{\rho}\le \rho \\
\sigma \ne (R,\ell+1,\ldots,{\rho})}}
A_{\sigma}B_{\sigma}
=\sum^{\min\{d,n-d+1\}}_{t=1}(-1)^t
\sum_{\substack{
1\le p_1<\cdots<p_t\le d \\
d\le q_t<\cdots<q_1\le n}}A_{P,Q}B_{P,Q}.    
\end{equation}
Hence, combining \eqref{for:Plucker first step} and  
\eqref{for:Plucker second step}
with 
\begin{align*}
A_{P,Q}
=(-1)^{\varepsilon_{P,Q}}
S^{(r-1)}_{\rem^{p_1,\ldots,p_t}_{q_t,\ldots,q_1}(\lambda)}(X),
\quad 
B_{P,Q}
=(-1)^{\varepsilon_{P,Q}}
S^{(r)}_{\add^{p_1,\ldots,p_t}_{q_t,\ldots,q_1}(\lambda)}(X),
\end{align*}
again derived from \eqref{for:9thJT-nnsy} 
where
\[
\varepsilon_{P,Q}
=
\begin{cases}
\sum^{t}_{i=1}(Q_i-P_{t+1-i}-i) & 
\text{($s_L=R$)},\\
\sum^{t-1}_{i=1}(Q_i-P_{t+1-i}-i)+P_1 & \text{(otherwise)},
\end{cases}
\]
we obtain the desired formula \eqref{for:9thPlucker}.

One can similarly prove \eqref{for:9thDualPlucker}
by using the matrix 
$E_{(\lambda'-(1^{\ell}))'}^{(r)}(X)\,\square\,E_{(\lambda'+(1^{\ell}))'}^{(r+1)}(X)$,
where $E_{\lambda}^{(r)}(X)$ is also defined in Theorem \ref{thm:JT-nnsy}.
\end{proof}


\begin{example}
When $\lambda=(3,2,2,1)$ and $d=2$, we have 
\begin{align*}
\ytableausetup{boxsize=12pt,aligntableaux=center}
\scriptsize
\begin{ytableau}
     0 &      1 &      2 \\
\mb{1} &      0 \\
\mb{2} & \mb{1} \\
\mb{3}   
\end{ytableau}
\cdot
\begin{ytableau}
\mb{1} &      0 & 1\\
\mb{2} & \mb{1} \\
\mb{3} & \mb{2} \\
\mb{4}
\end{ytableau}
\,
&\overset{\eqref{for:9thPlucker}}{=}
\,
\scriptsize
\begin{ytableau}
 0     & 1  \\
\mb{1} \\
\mb{2} \\
\mb{3}
\end{ytableau}
\cdot 
\begin{ytableau}
\mb{1} &      0 &      1 & 2\\
\mb{2} & \mb{1} &      0 \\
\mb{3} & \mb{2} & \mb{1} \\
\mb{4}
\end{ytableau}
\,+\,
\scriptsize
\begin{ytableau}
     0 &      1 & 2 \\
\mb{1} &      0 & 1 \\
\mb{2} & \mb{1} & 0 \\
\mb{3} & \mb{2} & \mb{1}
\end{ytableau}
\cdot 
\begin{ytableau}
\mb{1} \\
\mb{2} \\
\mb{3} \\
\mb{4}
\end{ytableau}
\,+\,
\scriptsize
\begin{ytableau}
     0 &      1 & 2 \\
\mb{1} &      0 & 1 \\
\mb{2} & \mb{1} & 0 \\
\mb{3} & \mb{2} & \mb{1} \\
\mb{4} & \mb{3}
\end{ytableau}
\cdot 
\begin{ytableau}
\mb{1} \\
\mb{2} 
\end{ytableau}
\\
&
\qquad 
\,+\,
\scriptsize
\begin{ytableau}
     0 &      1 & 2 \\
\mb{1} &      0 \\
\mb{2} & \mb{1} \\
\mb{3} & \mb{2} 
\end{ytableau}
\cdot 
\begin{ytableau}
\mb{1} &      0 & 1\\
\mb{2} & \mb{1} \\
\mb{3} \\
\mb{4}
\end{ytableau}
\,+\,
\scriptsize
\begin{ytableau}
 0     & 1      & 2 \\
\mb{1} & 0 \\
\mb{2} & \mb{1} \\
\mb{3} & \mb{2} \\
\mb{4} & \mb{3}
\end{ytableau}
\cdot 
\begin{ytableau}
\mb{1} &      0 & 1 \\
\mb{2} & \mb{1} 
\end{ytableau}
\,-\,
\scriptsize
\begin{ytableau}
     0 &      1 & 2 \\
\mb{1} &      0 & 1 \\
\mb{2} & \mb{1} & 0 \\
\mb{3} & \mb{2} & \mb{1} \\
\mb{4} & \mb{3} & \mb{2}
\end{ytableau}
\cdot 
\begin{ytableau}
\mb{1} 
\end{ytableau}
\,,\\
\scriptsize
\begin{ytableau}
     0 &      1 &      2 \\
\mb{1} &      0 \\
\mb{2} & \mb{1} \\
\mb{3}   
\end{ytableau}
\cdot
\begin{ytableau}
1      & 2 & 3\\
0      & 1 \\
\mb{1} & 0 \\
\mb{2}
\end{ytableau}
\,
&\overset{\eqref{for:9thDualPlucker}}{=}
\,
\scriptsize
\begin{ytableau}
 0     & 1 & 2 \\
\mb{1} & 0 \\
\mb{2} 
\end{ytableau}
\cdot 
\begin{ytableau}
1      &      2 & 3 \\
0      &      1 \\
\mb{1} &      0 \\
\mb{2} & \mb{1} \\
\mb{3}
\end{ytableau}
\,+\,
\scriptsize
\begin{ytableau}
     0 &      1 & 2 \\
\mb{1} &      0 & 1 \\
\mb{2} & \mb{1} & 0 \\
\mb{3} & \mb{2} & \mb{1}
\end{ytableau}
\cdot 
\begin{ytableau}
1 & 2 & 3 \\
0 
\end{ytableau}
\,+\,
\scriptsize
\begin{ytableau}
     0 &      1 & 2 & 3 \\
\mb{1} &      0 & 1 & 2 \\
\mb{2} & \mb{1} & 0 \\
\mb{3} & \mb{2} & \mb{1} 
\end{ytableau}
\cdot 
\begin{ytableau}
1 \\
0 
\end{ytableau}
\\
&
\qquad 
\,+\,
\scriptsize
\begin{ytableau}
     0 &      1 & 2 \\
\mb{1} &      0 & 1 \\
\mb{2} & \mb{1} & 0 \\
\mb{3} 
\end{ytableau}
\cdot 
\begin{ytableau}
     1 & 2 & 3 \\
     0 \\
\mb{1} \\
\mb{2}
\end{ytableau}
\,+\,
\scriptsize
\begin{ytableau}
 0     & 1      & 2 & 3 \\
\mb{1} & 0      & 1 & 2 \\
\mb{2} & \mb{1} & 0 \\
\mb{3} 
\end{ytableau}
\cdot 
\begin{ytableau}
     1 \\
     0 \\
\mb{1} \\
\mb{2} 
\end{ytableau}
\,-\,
\scriptsize
\begin{ytableau}
     0 &      1 &      2 & 3 \\
\mb{1} &      0 &      1 & 2 \\
\mb{2} & \mb{1} &      0 & 1 \\
\mb{3} & \mb{2} & \mb{1} & 0 \\
\end{ytableau}
\,.
\end{align*}
\end{example}

Now, let us again consider the case of a rectangle $\lambda=[\,p\,|\,q\,]$.
Letting $n=1$, $k=1$ and $\ell=\rho=p$ 
in \eqref{for:9thPlucker},
one obtains \eqref{square}.
More generally, we can prove the following result,
which gives \eqref{square} when $a=b=1$.
Here, we employ the notation
$[\,p\,|\,q\,]^{l}_{k}:=((q+1)^{l},q^{p-l},k)$,
$[\,p\,|\,q\,]^{l}:=[\,p\,|\,q\,]^{l}_{0}$ and 
$[\,p\,|\,q\,]_k:=[\,p\,|\,q\,]^{0}_{k}$
defined in \cite{gps06}.



\begin{theorem}
\label{square general}
For $p,q\ge 1$ and $a,b\ge 0$ satisfying  $a\le q$ and $a+b\le p+1$,
we have 
\begin{align}
\label{for:9thRectangle general}
\begin{split}
&
(-1)^{a+b}S_{[\,p+1\,|\,q+b-1\,]}^{(r)}(X)
S_{[\,p-1\,|\,q-a\,]^{p-a-b+1}}^{(r-1)}(X)\\
&=S_{[\,p\,|\,q+b-1\,]}^{(r)}(X)
S_{[\,p\,|\,q-a+1\,]}^{(r-1)}(X)
-S_{[\,p\,|\,q-a\,]}^{(r)}(X)
S_{[\,p\,|\,q+b\,]}^{(r-1)}(X)\\
&\quad +\sum^{a+b-3}_{t=0}(-1)^{t-1}
S_{[\,p\,|\,q+b-1\,]_{q-a+t+1}}^{(r)}(X)
S_{[\,p-1\,|\,q-a\,]^{p-t-1}}^{(r-1)}(X).
\end{split}
\end{align}
\end{theorem}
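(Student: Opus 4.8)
The plan is to specialize the argument proving Theorem~\ref{thm:9thPlucker} to the rectangle $\lambda=(q^{p})=[\,p\,|\,q\,]$, which has a single corner, but to keep the parameters $a,b$ general instead of putting $a=b=1$ at the end. With $n=1$, $d=1$ and $\ell=\rho=p$, the matrix constructed there is $M=H^{(r)}_{((q-a)^{p})}(X)\,\square\,H^{(r-1)}_{((q+b)^{p})}(X)$, and \eqref{for:Plucker first step} (which was established for arbitrary $\lambda$ and all $a,b\ge 0$) becomes
\[
 S_{[\,p\,|\,q-a\,]}^{(r)}(X)\,S_{[\,p\,|\,q+b\,]}^{(r-1)}(X)
 =S_{[\,p\,|\,q+b-1\,]}^{(r)}(X)\,S_{[\,p\,|\,q-a+1\,]}^{(r-1)}(X)
 +\sum_{c=1}^{p}A_{\sigma}B_{\sigma},
\]
where the sum runs over $\sigma=(s_{L})$ with $s_{L}=c\in\{1,\dots,p\}$ (there are no indices $s_{\ell+1},\dots,s_{\rho}$, since $\ell=\rho$). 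Rearranging, \eqref{for:9thRectangle general} is equivalent to the identity
\[
 \sum_{c=1}^{p}A_{\sigma}B_{\sigma}
 =(-1)^{a+b+1}S_{[\,p+1\,|\,q+b-1\,]}^{(r)}(X)\,S_{[\,p-1\,|\,q-a\,]^{p-a-b+1}}^{(r-1)}(X)
 +\sum_{t=0}^{a+b-3}(-1)^{t-1}S_{[\,p\,|\,q+b-1\,]_{q-a+t+1}}^{(r)}(X)\,S_{[\,p-1\,|\,q-a\,]^{p-t-1}}^{(r-1)}(X),
\]
so the whole proof reduces to a term-by-term evaluation of $A_{\sigma}B_{\sigma}$ for $s_{L}=c$.

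For $A_{\sigma}$, cofactor expansion of the relevant minor of $M$ along the rows $L$ and $R$ reduces it to a minor of $H^{(r-1)}_{((q-a+1)^{p})}(X)$ with one row and the last column deleted; this is the Jacobi--Trudi matrix of the thick/thin rectangle $[\,p-1\,|\,q-a\,]^{c-1}$, and after recording the transposition sign produced by the $\sigma_{RS}$-reordering in the definition of $A_{\sigma}$ one gets $A_{\sigma}=(-1)^{c+1}S^{(r-1)}_{[\,p-1\,|\,q-a\,]^{c-1}}(X)$. For $B_{\sigma}$, the minor is the determinant of a matrix whose rows form the Jacobi--Trudi sequence $\kappa=(q-a-c+1,(q+b)^{p})$, which is not a partition; the key point (and the source of the vanishing) is that the staircase-shifted values $\kappa_{m}-m$ contain a repetition, hence the determinant vanishes, precisely when $a+b+c\le p+1$, i.e. $c\le p-a-b+1$, whereas for $c\ge p-a-b+2$ one sorts the rows by a single $(p+1)$-cycle (sign $(-1)^{p}$), turning $\kappa$ into the partition $[\,p\,|\,q+b-1\,]_{q-a+(p-c)+1}$, which degenerates to $[\,p+1\,|\,q+b-1\,]$ exactly when $c=p-a-b+2$. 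Setting $t=p-c$ and multiplying out, $A_{\sigma}B_{\sigma}=(-1)^{t-1}S^{(r-1)}_{[\,p-1\,|\,q-a\,]^{p-t-1}}(X)\,S^{(r)}_{[\,p\,|\,q+b-1\,]_{q-a+t+1}}(X)$ for $0\le t\le a+b-2$; summing over the $a+b-1$ surviving values of $c$ and isolating the term $t=a+b-2$ (for which $q-a+t+1=q+b-1$, so the shape is $[\,p+1\,|\,q+b-1\,]$ and the sign is $(-1)^{a+b-3}=(-1)^{a+b+1}$) yields the displayed identity, hence \eqref{for:9thRectangle general}.

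The step I expect to be the main obstacle is the sign bookkeeping: one must combine the two cofactor-expansion signs (from the rows $L$ and $R$, the latter carrying the built-in $(-1)^{\rho}$ of the $\square$-construction), the transposition sign $(-1)^{c}$ coming from $\sigma_{RS}$ in $A_{\sigma}$, and the $(p+1)$-cycle sign $(-1)^{p}$ from sorting the non-partition sequence $\kappa$ in $B_{\sigma}$, and then check that they collapse to the clean exponent $t-1$. The remaining work — identifying the minors of $M$ with Jacobi--Trudi matrices of the shapes $[\,p-1\,|\,q-a\,]^{c-1}$ and $[\,p\,|\,q+b-1\,]_{q-a+t+1}$, and pinning down which $c$ annihilate $B_{\sigma}$ — is routine once the analysis of $\kappa_{m}-m$ is written out; note also that the hypotheses $a\le q$ and $a+b\le p+1$ are exactly what is needed to keep every shape that occurs a genuine partition and to make the index range $p-a-b+2\le c\le p$ nonempty. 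Finally, no separate argument is required for the (implicit) conjugate statement: starting instead from $E^{(r)}_{(\lambda'-(1^{\ell}))'}(X)\,\square\,E^{(r-1)}_{(\lambda'+(1^{\ell}))'}(X)$, exactly as in the second half of the proof of Theorem~\ref{thm:9thPlucker}, produces the same identity.
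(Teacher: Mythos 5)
Your proposal is correct and follows essentially the same route as the paper: the paper's proof also specializes \eqref{for:Plucker first step} to $\lambda=[\,p\,|\,q\,]$ with general $a,b$, evaluates $A_t=(-1)^{t-1}S^{(r-1)}_{[\,p-1\,|\,q-a\,]^{t-1}}(X)$, observes that $B_t=0$ for $1\le t\le p+1-a-b$ because two rows coincide, identifies the remaining $B_t$ with $(-1)^{p}S^{(r)}_{[\,p\,|\,q+b-1\,]_{q-a-t+p+1}}(X)$, and isolates the extremal term exactly as you do. Your sign bookkeeping and shape identifications agree with the paper's (which states them without detail), so there is no gap.
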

\begin{proof}
From \eqref{for:Plucker first step}
with $\lambda=[\,p\,|\,q\,]$,
we have 
\begin{align*}
 S_{[\,p\,|\,q-a\,]}^{(r)}(X)
 S_{[\,p\,|\,q+b\,]}^{(r-1)}(X)
&=S_{[\,p\,|\,q-a+1\,]}^{(r-1)}(X)
S_{[\,p\,|\,q+b-1\,]}^{(r)}(X)
+\sum_{1\le t\le p}A_tB_t,
\end{align*}
where 
$A_t:=[R,1,\ldots,\overset{t}{L},\ldots,p]$ and 
$B_t:=[t,1',\ldots,p']$.
It is easy to see that 
\begin{align*}
A_t
&=(-1)^{t-1}S^{(r-1)}_{[\,p-1\,|\,q-a\,]^{t-1}}(X),\\
B_t
&=
\begin{cases}
0 & 1\le t\le p+1-a-b,\\
(-1)^{p}S^{(r)}_{[\,p\,|\,q+b-1\,]_{q-a-t+p+1}}(X) & p+2-a-b\le t\le p. 
\end{cases}
\end{align*}
(Notice that when $1\le t\le p+1-a-b$, 
the $L$-th row and the $s'$-th row coincide for $s=t+a+b-1\le p$.)
Hence, we obtain the desired result.

\end{proof}

\begin{example}
When $a+b=2$ in \eqref{for:9thRectangle general}, we reobtain \eqref{for:9thRectangle}.
When $a+b=3$, we have 
\begin{align*}
&-S_{[\,p+1\,|\,q-a+2\,]}^{(r)}(X)
S_{[\,p-1\,|\,q-a\,]^{p-2}}^{(r-1)}(X)\\
& 
=S_{[\,p\,|\,q-a+2\,]}^{(r)}(X)
S_{[\,p\,|\,q-a+1\,]}^{(r-1)}(X)
-S_{[\,p\,|\,q-a\,]}^{(r)}(X)
S_{[\,p\,|\,q-a+3\,]}^{(r-1)}(X) \\
&\quad 
-S_{[\,p\,|\,q-a+2\,]_{q-a+1}}^{(r)}(X)
S_{[\,p-1\,|\,q-a+1\,]}^{(r-1)}(X).
\end{align*}
For example, when $p=q=3$, $a=1$ and $b=2$,
we have 
\begin{align*}
\ytableausetup{boxsize=12pt,aligntableaux=center}
-\,
\scriptsize
\begin{ytableau}
     0 &      1 &      2 & 3 \\
\mb{1} &      0 &      1 & 2 \\
\mb{2} & \mb{1} &      0 & 1 \\
\mb{3} & \mb{2} & \mb{1} & 0  
\end{ytableau}
\cdot
\begin{ytableau}
\mb{1} &      0 & 1\\
\mb{2} & \mb{1} 
\end{ytableau}
\,
=
\,
\scriptsize
\begin{ytableau}
 0     &      1 &      2 & 3 \\
\mb{1} &      0 &      1 & 2 \\
\mb{2} & \mb{1} &      0 & 1 \\
\end{ytableau}
\cdot 
\begin{ytableau}
\mb{1} &      0 &      1 \\
\mb{2} & \mb{1} &      0 \\
\mb{3} & \mb{2} & \mb{1} \\
\end{ytableau}
\,-\,
\scriptsize
\begin{ytableau}
     0 &      1 \\
\mb{1} &      0 \\
\mb{2} & \mb{1} 
\end{ytableau}
\cdot 
\begin{ytableau}
\mb{1} &      0 &      1 & 2 & 3 \\
\mb{2} & \mb{1} &      0 & 1 & 2 \\
\mb{3} & \mb{2} & \mb{1} & 0 & 1  
\end{ytableau}
\,-\,
\scriptsize
\begin{ytableau}
     0 &      1 &      2 & 3 \\
\mb{1} &      0 &      1 & 2 \\
\mb{2} & \mb{1} &      0 & 1 \\
\mb{3} & \mb{2} & \mb{1} 
\end{ytableau}
\cdot
\begin{ytableau}
\mb{1} &      0 & 1 \\
\mb{2} & \mb{1} & 0 
\end{ytableau}
\,.
\end{align*}
\end{example}

\begin{remark}
As demonstrated in this section, further quadratic identities for the ninth variation of Schur function can be systematically derived via the Jacobi–Trudi formula in combination with the Pl\"{u}cker relations.
For example, one may obtain a generalization of the identity for the Schur function given by Gurevich, Pyatov, and Saponov in \cite[Proposition 3.1]{gps10}.
\end{remark}

\begin{remark}
In \cite{hg}, Hamel and Goulden obtained a determinant formula for the Schur function via the outside decomposition of Young diagrams. This formula generalizes various determinant expressions, such as the Jacobi–Trudi formulas, the dual Jacobi–Trudi formulas, the Giambelli formula, and the Lascoux–Pragacz formula \cite{lp88}.
We expect to obtain similar algebraic relations for the ninth variation of Schur functions by combining the Hamel–Goulden formula and the Pl\"{u}cker relations, following the same strategy as above.
As a further generalization of the ninth variation of the Schur function, Bachmann and Charlton \cite{bc} introduced the tenth variation of Schur function.
We expect to obtain similar results for this version without the “diagonal conditions,” which will be explained in the next section,
by taking special sums, as in \cite{nt}.
\end{remark}


\section{Application: Diagonally constant Schur multiple zeta values}
\label{application}
\ytableausetup{boxsize=normal,aligntableaux=center}

As an application of algebraic relations for $S^{(r)}_{\lambda/\mu}(X)$ or $S^{\FK}_{\lambda/\mu}(W)$ obtained in the previous sections, we derive corresponding relations for a special type of $M$-truncated Schur multiple zeta functions, defined for an index $\bs=(s_{i,j})\in \T(\lambda/\mu,\mathbb{C})$ by 
\[
 \zeta^M_{\lambda/\mu}(\bs)
:=\sum_{(t_{i,j})\in \SSYT_M(\lambda/\mu)}\prod_{(i,j)\in D(\lambda/\mu)}t^{-s_{i,j}}_{i,j},
\]
and the limit (if exists)
\[
\zeta_{\lambda/\mu}(\bs)
:=\lim_{M\to\infty}\zeta^{M}_{\lambda/\mu}(\bs),
\]
which we call the Schur multiple zeta function.
It is shown in \cite[Lemma~2.1]{npy} that 
$\zeta_{\lambda/\mu}(\bs)$ converges absolutely for $\bs\in W_{\lambda/\mu}$, where 
\begin{align}
\label{converge_domain}
 W_{\lambda/\mu}
:=
\left\{(s_{ij})\in \T({\lambda/\mu},\mathbb{C})\left|
\begin{array}{l}
 \text{$\mathrm{Re}(s_{i,j})\ge 1$ for all $(i,j)\in D({\lambda/\mu}) \setminus C({\lambda/\mu})$} \\[3pt]
 \text{$\mathrm{Re}(s_{i,j})>1$ for all $(i,j)\in C({\lambda/\mu})$}
\end{array}
\right.
\right\}. 
\end{align}
The Schur multiple zeta function is a simultaneous generalization of both Euler-Zagier type multiple zeta-star function $\zeta^{\star}(s_1,\ldots,s_d):=\lim_{M\to\infty}\zeta^{\star,M}(s_1,\ldots,s_d)$, and the multiple zeta functions $\zeta(s_1,\ldots,s_d):=\lim_{M\to\infty}\zeta^M(s_1,\ldots,s_d)$, where
\begin{align*}
 \zeta^{\star,M}(s_1,\ldots,s_d)
:=\sum_{1\le m_1\le\cdots\le m_d\le M}\frac{1}{m_1^{s_1}\cdots m_d^{s_d}}, 
\quad 
 \zeta^{M}(s_1,\ldots,s_d)
:=\sum_{1\le m_1<\cdots< m_d\le M}\frac{1}{m_1^{s_1}\cdots m_d^{s_d}},
\end{align*}
in the sense that 
\begin{equation}
\label{zetazetas}
\ytableausetup{boxsize=15pt,aligntableaux=center}
\zeta_{(d)}
\left(\ 
\begin{ytableau}
s_1 & \tcdots & s_d
\end{ytableau}
\ \right)
=\zeta^{\star}(s_1,\ldots,s_d),
\quad 
\zeta_{(1^d)}
\left(\ 
\begin{ytableau}
s_1\\
\tvdots\\
s_d
\end{ytableau}
\ \right)
=\zeta(s_1,\ldots,s_d).
\end{equation}
If the index $\bs$ is ``diagonally constant'', that is, 
$\bs\in \T^{\diag}(\lambda/\mu,\mathbb{C})$, where 
\[
\T^{\diag}(\lambda/\mu,\mathbb{C})
:=\{(t_{i,j})\in \T(\lambda/\mu,\mathbb{C})\,|\,t_{i,j}=t_{k,l} \ {\mathrm{if}} \ c(i,j)=c(k,l)\},
\]
then, one easily sees that 
$\zeta^{M}_{\lambda/\mu}(\bs)$ is realized as a specialization of $S^{\FK}_{\lambda/\mu}(W)$ and hence 
of $S^{(r)}_{\lambda/\mu}(X)$ from Lemma~\ref{lemma:nnsy to fk} as follow
(see also \cite[Lemma~4.2]{npy}).

\begin{lemma}
For $\ba=(a_c)_{c\in\mathbb{Z}}$, 
put $W=\{w_{k,c}\}_{k\in [M], c\in\mathbb{Z}}$ 
with $w_{k,c}=k^{-a_c}$, 
and $\bu=\{u^{(t)}_k\}_{k\in [M],t\in [N-1]}$ 
with $u^{(t)}_k=w_{k,t-r}=k^{-a_{t-r}}$.
Define $\ba|_{\lambda/\mu}:=(a_{c(i,j)})_{(i,j)\in D(\lambda/\mu)}\in \T^{\diag}(\lambda/\mu,\mathbb{C})$.
Then, for $m\in\mathbb{Z}$, we have 
\[
\zeta^{M}_{\lambda/\mu}((\tau^m\ba)|_{\lambda/\mu})
=S^{\FK}_{\lambda/\mu}(\tau^m W)
=S^{(r+m)}_{\lambda/\mu}(U_M(\bu)),
\]
where $\tau^m\ba:=(a_{c+m})_{c\in\mathbb{Z}}$ and $U_M(\bu)$ was defined in Theorem~\ref{thm:nnsy special}.
\end{lemma}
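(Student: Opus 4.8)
The plan is to unwind the three expressions from their definitions and observe that, under the stated substitution, they agree term by term; the only external input required is Lemma~\ref{lemma:nnsy to fk} (equivalently, formula~\eqref{for:tableau_U}).

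First I would record that $\ba|_{\lambda/\mu}$, and hence $(\tau^m\ba)|_{\lambda/\mu}$, genuinely lies in $\T^{\diag}(\lambda/\mu,\mathbb{C})$, since its $(i,j)$-entry equals $a_{c(i,j)+m}$ and thus depends only on the content $c(i,j)$. Unwinding the definition of the $M$-truncated Schur multiple zeta function for the index tableau with $(i,j)$-entry $s_{i,j}=a_{c(i,j)+m}$, one gets
\[
\zeta^{M}_{\lambda/\mu}\bigl((\tau^m\ba)|_{\lambda/\mu}\bigr)
=\sum_{(t_{i,j})\in \SSYT_M(\lambda/\mu)}\ \prod_{(i,j)\in D(\lambda/\mu)}t_{i,j}^{-a_{c(i,j)+m}}.
\]
Next I would expand $S^{\FK}_{\lambda/\mu}(\tau^m W)$ via \eqref{def:9thFK}: since $(\tau^m W)_{k,c}=w_{k,m+c}=k^{-a_{m+c}}$, the summand attached to a tableau $(t_{i,j})$ is $\prod_{(i,j)}(\tau^m W)_{t_{i,j},c(i,j)}=\prod_{(i,j)}t_{i,j}^{-a_{m+c(i,j)}}$. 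As both sums range over $\SSYT_M(\lambda/\mu)$, this coincides with the display above, which gives the first equality. For the second equality I would simply invoke Lemma~\ref{lemma:nnsy to fk} with the choice $u^{(t)}_k=w_{k,t-r}=k^{-a_{t-r}}$ made here, yielding $S^{\FK}_{\lambda/\mu}(\tau^m W)=S^{(r+m)}_{\lambda/\mu}(U_M(\bu))$; equivalently, one may substitute $u^{(r+m+c(i,j))}_{t_{i,j}}=t_{i,j}^{-a_{m+c(i,j)}}$ directly into \eqref{for:tableau_U} with $r$ replaced by $r+m$ and read off the same sum.

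There is no serious obstacle: the statement is a bookkeeping corollary of Lemma~\ref{lemma:nnsy to fk} (or of \eqref{for:tableau_U}) together with the definitions of $\zeta^{M}_{\lambda/\mu}$ and $S^{\FK}_{\lambda/\mu}$. The only point that needs care is keeping the various shifts aligned, so that the subscript on $w$, the superscript on $u^{(t)}$, and the subscript on $a$ match after the translations by $r$ and by $m$; one should also note that the truncation parameter $M$ is the same throughout, being simultaneously the number of admissible tableau entries in $\zeta^{M}_{\lambda/\mu}$ and $S^{\FK}_{\lambda/\mu}$ and the number of unitriangular factors $U_1\cdots U_M$ in $U_M(\bu)$.
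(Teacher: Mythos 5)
Your proposal is correct and follows exactly the route the paper intends: the paper gives no separate proof, stating only that the identity follows from the definitions together with Lemma~\ref{lemma:nnsy to fk}, and your term-by-term comparison of the tableau sums plus the invocation of \eqref{for:nnsy to fk by U} (equivalently \eqref{for:tableau_U} with $r$ replaced by $r+m$) is precisely that argument, with the shift bookkeeping handled correctly.
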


From now on, we write $\zeta^{M}_{\lambda/\mu}(\ba|_{\lambda/\mu})$ simply as $\zeta^{M}_{\lambda/\mu}(\ba)$.
The following results are direct consequences of
Theorem~\ref{thm:Giambelli nnsy}, Theorem~\ref{fk9th} and Theorem~\ref{thm:9thPlucker}.
Notice that Corollary~\ref{cor:Schur skew Giambelli} is a skew generalization of \cite[Theorem~4.5]{npy}.

\begin{cor}
\label{cor:Schur skew Giambelli}
Retaining the notations from  Subsection~\ref{sebsec:Giambelli},
for $\ba=(a_c)_{c\in\mathbb{Z}}\in\mathbb{C}^{\mathbb{Z}}$, we have
\begin{equation}
\label{for:Schur zeta skew Giambelli}
 \zeta^{M}_{\lambda/\mu}(\ba)
=(-1)^q\det 
\left[
\begin{array}{c|c}
\left[\zeta^{M}_{(\alpha_i\,|\,\beta_j)}(\ba)\right]_{\substack{1\le i\le p \\ 1\le j\le p}} & 
\left[\zeta^{\star,M}_{\alpha_i-\gamma_j}(\tau^{\gamma_j+1}\ba)\right]_{\substack{1\le i\le p \\ 1\le j\le q}} \\[10pt]
\hline \\[-10pt]
 \left[\zeta^{M}_{\beta_j-\delta_i}(\tau^{-\delta_i-1}\ba)\right]_{\substack{1\le i\le q \\ 1\le j\le p}} & O_{q}
\end{array}
\right].
\end{equation}
\end{cor}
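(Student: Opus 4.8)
The plan is to deduce this corollary from the Giambelli formula of Theorem~\ref{thm:Giambelli nnsy} by specialisation. By the preceding Lemma, putting $X=U_M(\bu)$ with $u^{(t)}_k=k^{-a_{t-r}}$ turns $S^{(r+m)}_{\mu/\nu}(U_M(\bu))$ into $\zeta^M_{\mu/\nu}(\tau^m\ba)$ for every skew shape $\mu/\nu$ and every $m\in\mathbb{Z}$; note that $U_M(\bu)$ is already upper unitriangular, so its Gauss decomposition is trivial and every minor entering Theorem~\ref{thm:Giambelli nnsy} is a genuine polynomial in $\bu$, with no denominators to worry about. Since \eqref{for:Gambelli nnsy} holds as an algebraic identity, it survives this specialisation, so the whole task reduces to re-expressing the three block types of $G^{(r)}_{\lambda/\mu}(X)$ in terms of $\zeta^M$-, $\zeta^{\star,M}$-, and $\zeta^M$-values.

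First I would handle the hook block: the Lemma with $m=0$ gives $S^{(r)}_{(\alpha_i\,|\,\beta_j)}(U_M(\bu))=\zeta^M_{(\alpha_i\,|\,\beta_j)}(\ba)$. Next the $h$-block: by \eqref{def:h} one has $h^{(r+\gamma_j+1)}_{\alpha_i-\gamma_j}(X)=S^{(r+\gamma_j+1)}_{(\alpha_i-\gamma_j)}(X)$, so with $m=\gamma_j+1$ this becomes $\zeta^M_{(\alpha_i-\gamma_j)}(\tau^{\gamma_j+1}\ba)$, and since a one-row shape $(d)$ has contents $0,1,\dots,d-1$ and yields a multiple zeta-star value by \eqref{zetazetas}, this equals $\zeta^{\star,M}_{\alpha_i-\gamma_j}(\tau^{\gamma_j+1}\ba)$. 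Symmetrically, by \eqref{def:e}, $e^{(r-\delta_i-1)}_{\beta_j-\delta_i}(X)=S^{(r-\delta_i-1)}_{(1^{\beta_j-\delta_i})}(X)$ specialises to $\zeta^M_{(1^{\beta_j-\delta_i})}(\tau^{-\delta_i-1}\ba)$, and a one-column shape $(1^d)$ has contents $0,-1,\dots,-(d-1)$ and gives a multiple zeta value by \eqref{zetazetas}, hence $\zeta^M_{\beta_j-\delta_i}(\tau^{-\delta_i-1}\ba)$. Substituting these three evaluations into \eqref{for:Gambelli nnsy}, and carrying along the prefactor $(-1)^q$, produces \eqref{for:Schur zeta skew Giambelli}.

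The only step that I expect to require genuine care, rather than pure bookkeeping, is the matching of the two index conventions: one must verify that the content string along a single row, after the shift $\tau^{\gamma_j+1}$, reproduces exactly the argument string $(a_{\gamma_j+1},\dots,a_{\alpha_i})$ encoded by $\zeta^{\star,M}_{\alpha_i-\gamma_j}(\tau^{\gamma_j+1}\ba)$, and similarly for the column, and that the degenerate entries occurring in the off-diagonal parts of the hook and strip blocks — where $\alpha_i-\gamma_j<0$ or $\beta_j-\delta_i<0$ — vanish on both sides, while the entries with exponent $0$ equal $1$ on both sides. This follows from the vanishing conventions $h^{(r)}_d=e^{(r)}_d=0$ for $d<0$ fixed in \eqref{def:h}--\eqref{def:e}, together with the matching conventions $\zeta^{\star,M}_0=\zeta^M_0=1$ and $\zeta^{\star,M}_d=\zeta^M_d=0$ for $d<0$. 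Since all the sums involved are truncated at level $M$, no convergence hypothesis is needed, and the identity holds for every $\ba\in\mathbb{C}^{\mathbb{Z}}$, as asserted.
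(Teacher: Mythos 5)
Your proposal is correct and follows exactly the route the paper intends: the corollary is stated there as a direct consequence of Theorem~\ref{thm:Giambelli nnsy} under the specialization $X=U_M(\bu)$ of Lemma~5.1 (the diagonally constant case of Lemma~\ref{lemma:nnsy to fk}), and your block-by-block identifications of the hook, $h$-, and $e$-entries with $\zeta^{M}_{(\alpha_i\,|\,\beta_j)}$, $\zeta^{\star,M}$, and $\zeta^{M}$ values, including the vanishing/unit conventions for degenerate subscripts, are precisely the bookkeeping the paper leaves implicit. Your remark that $U_M(\bu)$ is upper unitriangular, so no denominators of the Gauss decomposition can vanish under the specialization, is a sensible extra precaution consistent with the paper's setup.
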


\begin{cor}
Let $\lambda/\mu$ be a skew partition with 
$\lambda=(\lambda_1,\ldots,\lambda_{k+1})$ and 
$\mu=(\mu_1,\ldots,\mu_{k+1})$,
and $\lambda'=(\lambda'_1,\ldots,\lambda'_{l+1})$ and 
$\mu'=(\mu'_1,\ldots,\mu'_{l+1})$ their conjugates, respectively.
\begin{itemize}
\item[(1)] 
It holds that  
\begin{align*}
& 
\zeta^M_{(\lambda_1,\ldots,\lambda_{k+1})/(\mu_1,\ldots,\mu_{k+1})}(\ba)
\cdot \zeta^M_{(\lambda_2,\ldots,\lambda_{k})/(\mu_2,\ldots,\mu_{k})}(\tau^{-1}\ba)\\
&=\zeta^M_{(\lambda_1,\ldots,\lambda_{k})/(\mu_1,\ldots,\mu_{k})}(\ba)
\cdot \zeta^M_{(\lambda_2,\ldots,\lambda_{k+1})/(\mu_2,\ldots,\mu_{k+1})}(\tau^{-1}\ba)
\\
&\ \ \ 
-\zeta^M_{(\lambda_2-1,\ldots,\lambda_{k+1}-1)/(\mu_1,\ldots,\mu_{k})}(\ba)
\cdot \zeta^M_{(\lambda_1+1,\ldots,\lambda_{k}+1)/(\mu_2,\ldots,\mu_{k+1})}(\tau^{-1}\ba).
\end{align*}
\item[(2)]  
It holds that  
\begin{align*}
& \zeta^M_{(\lambda'_1,\ldots,\lambda'_{l+1})'/(\mu'_1,\ldots,\mu'_{l+1})'}(\ba)
 \cdot \zeta^M_{(\lambda'_2,\ldots,\lambda'_{l})'/(\mu'_2,\ldots,\mu'_{l})'}(\tau\ba)
\\
&=\zeta^M_{(\lambda'_1,\ldots,\lambda'_{l})'/(\mu'_1,\ldots,\mu'_{l})'}(\ba)
 \cdot \zeta^M_{(\lambda'_2,\ldots,\lambda'_{l+1})'/(\mu'_2,\ldots,\mu'_{l+1})'}(\tau\ba)\\
&\ \ \ 
-\zeta^M_{(\lambda'_2-1,\ldots,\lambda'_{l+1}-1)'/(\mu'_1,\ldots,\mu'_{l})'}(\ba)
 \cdot  \zeta^M_{(\lambda'_1+1,\ldots,\lambda'_{l}+1)'/(\mu'_2,\ldots,\mu'_{l+1})'}(\tau\ba).
\end{align*}
\end{itemize}
Here, we understand $\zeta^{M}_{\lambda/\mu}(\ba)=0$
if $\lambda/\mu$ is not a skew partition.
\end{cor}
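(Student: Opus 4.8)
The plan is to deduce this corollary directly from Theorem~\ref{fk9th} via the specialization described in the Lemma preceding Corollary~\ref{cor:Schur skew Giambelli}. Recall that the lemma gives $\zeta^M_{\lambda/\mu}(\ba)=S^{(r)}_{\lambda/\mu}(U_M(\bu))$ (upon writing $u^{(t)}_k=k^{-a_{t-r}}$), and more generally $\zeta^M_{\lambda/\mu}(\tau^m\ba)=S^{(r+m)}_{\lambda/\mu}(U_M(\bu))$. So the strategy is: first instantiate the matrix identity \eqref{for:DJAMT H} (respectively \eqref{for:DJAMT E}) with $X=U_M(\bu)$, then translate each factor on both sides into a truncated Schur multiple zeta value using the lemma, paying attention to the shift in the superscript $r\mapsto r\pm1$, which corresponds to the shift $\ba\mapsto\tau^{\mp1}\ba$.

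Concretely, for part~(1) I would apply \eqref{for:DJAMT H} with $X=U_M(\bu)$. The six factors appearing there are $S^{(r)}$ of shapes $(\lambda_1,\ldots,\lambda_{k+1})/(\mu_1,\ldots,\mu_{k+1})$, $(\lambda_1,\ldots,\lambda_k)/(\mu_1,\ldots,\mu_k)$ and $(\lambda_2-1,\ldots,\lambda_{k+1}-1)/(\mu_1,\ldots,\mu_k)$, together with $S^{(r-1)}$ of shapes $(\lambda_2,\ldots,\lambda_k)/(\mu_2,\ldots,\mu_k)$, $(\lambda_2,\ldots,\lambda_{k+1})/(\mu_2,\ldots,\mu_{k+1})$ and $(\lambda_1+1,\ldots,\lambda_k+1)/(\mu_2,\ldots,\mu_{k+1})$. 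Via the lemma, each $S^{(r)}_{\nu/\pi}(U_M(\bu))$ becomes $\zeta^M_{\nu/\pi}(\ba)$ and each $S^{(r-1)}_{\nu/\pi}(U_M(\bu))$ becomes $\zeta^M_{\nu/\pi}(\tau^{-1}\ba)$ (taking $m=-1$ in the lemma). Substituting these identifications into \eqref{for:DJAMT H} yields exactly the claimed relation, and the convention $S^{(r)}_{\lambda/\mu}(X)=0$ when $\lambda/\mu$ is not a skew partition transfers to $\zeta^M_{\lambda/\mu}(\ba)=0$ in the same cases. Part~(2) follows identically by starting from \eqref{for:DJAMT E}, where the $S^{(r+1)}$ factors now correspond to $\zeta^M(\tau\ba)$ by taking $m=+1$; here one additionally uses that the conjugate shapes $\nu'$ appearing in \eqref{for:DJAMT E} match the subscripts written in the corollary.

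I do not anticipate a substantive obstacle: the content is entirely in Theorem~\ref{fk9th}, and what remains is the bookkeeping of matching the six shapes and the two superscript values $r\pm1$ to the corresponding truncated zeta values and shifts $\tau^{\mp1}\ba$. The one point requiring mild care is to double-check the direction of the shift, namely that $S^{(r+m)}$ pairs with $\tau^m\ba$ and hence $r-1$ pairs with $\tau^{-1}\ba$ and $r+1$ with $\tau\ba$, consistent with the statement of the lemma. Since $M$ is fixed throughout and only algebraic (polynomial) identities are used, no convergence issue arises at this stage; convergence considerations would only enter when one further passes to the limit $M\to\infty$, which is not claimed here. Thus the proof is essentially a one-line invocation of Theorem~\ref{fk9th} together with the specialization lemma, and I would present it as such.
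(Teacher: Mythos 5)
Your proposal is correct and follows exactly the route the paper intends: the corollary is stated there as a direct consequence of Theorem~\ref{fk9th} via the specialization $X=U_M(\bu)$ and the identification $S^{(r+m)}_{\lambda/\mu}(U_M(\bu))=\zeta^{M}_{\lambda/\mu}(\tau^m\ba)$ from the preceding lemma, with $m=0,-1$ for part (1) and $m=0,+1$ for part (2). Your attention to the shift direction ($r+m\leftrightarrow\tau^m\ba$) and to the vanishing convention matches the paper's (implicit) argument, so nothing further is needed.
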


\begin{cor}
\label{cor:9thPlucker}
Let $\ba=(a_c)_{c\in\mathbb{Z}}$.
Let $\lambda$ be a partition having $n$ corners.
Take $1\le d\le n$. 
\begin{enumerate}
\item  
Denote the $d$-th shortest column height of $\lambda$ as $\ell$.
Then, we have  
\begin{align}
\label{for:9thPluckerZeta}
\begin{split}
&\zeta^{M}_{\lambda}(\ba)\zeta^{M}_{\lambda}(\tau^{-1}\ba)
=\zeta^{M}_{\lambda-(1^{\ell})}(\ba)\zeta^{M}_{\lambda+(1^{\ell})}(\tau^{-1}\ba)\\
&\quad +\sum^{\min\{d,n-d+1\}}_{t=1}(-1)^{t-1}
\sum_{\substack{1\le p_1<\cdots<p_t\le d \\ d\le q_t<\cdots<q_1\le n}}
\zeta^{M}_{\add^{p_1,\ldots,p_t}_{q_1,\ldots,q_t}(\lambda)}(\ba)
\zeta^{M}_{\rem^{p_1,\ldots,p_t}_{q_1,\ldots,q_t}(\lambda)}(\tau^{-1}\ba).
\end{split}
\end{align}
\item 
Denote the $d$-th shortest row length of $\lambda$ as $\ell$.
Then, we have  
\begin{align}
\label{for:9thDualPluckerZeta}
\begin{split}
&\zeta^{M}_{\lambda}(\ba)\zeta^{M}_{\lambda}(\tau\ba)
=\zeta^{M}_{(\lambda'-(1^{\ell}))'}(\ba)\zeta^{M}_{(\lambda'+(1^{\ell}))'}(\tau\ba)\\
&\quad +\sum^{\min\{d,n-d+1\}}_{t=1}(-1)^{t-1}
\sum_{\substack{1\le p_1<\cdots<p_t\le d \\ d\le q_t<\cdots<q_1\le n}}
\zeta^{M}_{(\add^{p_1,\ldots,p_t}_{q_1,\ldots,q_t}(\lambda'))'}(\ba)
\zeta^{M}_{(\rem^{p_1,\ldots,p_t}_{q_1,\ldots,q_t}(\lambda'))'}(\tau\ba).
\end{split}
\end{align}
\end{enumerate}
\end{cor}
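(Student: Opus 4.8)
The plan is to derive both identities as direct specializations of Theorem~\ref{thm:9thPlucker}, transported to the truncated Schur multiple zeta side by the lemma above expressing $\zeta^{M}_{\lambda/\mu}(\ba)$ as the value $S^{(r)}_{\lambda/\mu}(U_M(\bu))$ for a suitable upper unitriangular matrix. The first step is to fix $M\in\mathbb{N}$ and $\ba=(a_c)_{c\in\mathbb{Z}}$, and then to choose $r$ and $s$ (hence $N=r+s$) large enough that every partition occurring in \eqref{for:9thPlucker} (resp.\ in \eqref{for:9thDualPlucker}) is contained in the rectangle $(s^r)$; only finitely many shapes appear, so this is harmless, and it is precisely the condition ensuring that all the exponents $r+c(i,j)$ needed in that lemma lie in $[N-1]$. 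With $\bu=\{u^{(t)}_k\}$ given by $u^{(t)}_k=k^{-a_{t-r}}$ as in the lemma, I then substitute the matrix $X=U_M(\bu)$ into the identity \eqref{for:9thPlucker} of Theorem~\ref{thm:9thPlucker}, which holds for arbitrary $X$.

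The key point is that, by the same lemma, for every partition $\nu$ and every $m\in\mathbb{Z}$ one has $S^{(r+m)}_{\nu}(U_M(\bu))=\zeta^{M}_{\nu}(\tau^{m}\ba)$, where I keep the abbreviation $\zeta^{M}_{\nu}(\ba)=\zeta^{M}_{\nu}(\ba|_{\nu})$. Reading each factor with superscript $(r)$ at $m=0$ and each factor with superscript $(r-1)$ at $m=-1$ then turns \eqref{for:9thPlucker} verbatim, term by term, into \eqref{for:9thPluckerZeta}. For part~(2) I make the same substitution $X=U_M(\bu)$ in \eqref{for:9thDualPlucker}, now reading the $(r)$-factors at $m=0$ and the $(r+1)$-factors at $m=+1$, which yields \eqref{for:9thDualPluckerZeta}; the conjugations $(\,\cdot\,)'$ are purely combinatorial operations on shapes and are carried along unchanged. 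Since the zero convention $S^{(r)}_{\lambda/\mu}(X)=0$ for non-skew $\lambda/\mu$ matches $\zeta^{M}_{\lambda/\mu}(\ba)=0$ under the same convention, and since $\zeta^{M}_{\nu}(\ba)$ does not depend on the auxiliary parameter $s$, no term is created or lost in the passage and the resulting identities are the asserted ones.

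I do not expect a genuine obstacle here beyond bookkeeping: the one point demanding care is keeping track of which $\tau$-shift is attached to which superscript — the $(r)$-factor is unshifted, while the $(r\mp 1)$-factor is shifted by $\mp 1$ — and noting that the combinatorial data of $\lambda$ governing the right-hand sides, namely the number $n$ of corners, the chosen index $d$, the column (resp.\ row) height $\ell$, and the iterated operators $\add^{p_1,\ldots,p_t}_{q_1,\ldots,q_t}$ and $\rem^{p_1,\ldots,p_t}_{q_1,\ldots,q_t}$, depend only on the shape $\lambda$ and are therefore unaffected by passing from $S^{(r)}_{\lambda}(X)$ to $\zeta^{M}_{\lambda}(\ba)$.
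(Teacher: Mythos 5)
Your proposal is correct and is essentially the paper's argument: the paper derives this corollary as a direct consequence of Theorem~\ref{thm:9thPlucker} by specializing $X=U_M(\bu)$ and invoking the lemma identifying $S^{(r+m)}_{\nu}(U_M(\bu))$ with $\zeta^{M}_{\nu}(\tau^{m}\ba)$, exactly as you do (including the matching of the $(r\mp 1)$ superscripts with the $\tau^{\mp 1}$ shifts).
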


As another application or related topic of the results obtained in the previous sections, we next compute certain special values of the diagonally constant Schur multiple zeta values
$\zeta_{\lambda/\mu}(\ba)$ at positive integer points, and, more generally,
the regularized Schur multiple zeta values
$\zeta^{\ast}_{\lambda/\mu}(\ba)$ introduced in \cite{bc}.
This is a generalization of the stuffle (or harmonic) regularized multiple zeta values
$\zeta^{\ast}(\bk)$ introduced in \cite{ikz06}
satisfying that $\zeta^{\ast}_{\lambda/\mu}(\bk)=\zeta_{\lambda/\mu}(\bk)$
whenever the latter converges.
For the precise definitions of $\zeta^{\ast}(\bk)$ and 
$\zeta^{\ast}_{\lambda/\mu}(\bk)$, see the Appendix.

Let $\alpha,\beta,\gamma$ be positive integers.
For $p,q\ge 1$ and $m\in\mathbb{Z}$, define 
\begin{align*}
R^{(m)}_{p,q}=
R^{(m)}_{p,q}(\alpha,\beta,\gamma)
:=\zeta^{\ast}_{[\,p\,|\,q\,]}(\tau^m\ba),    
\end{align*}
where $\ba=(a_c)_{c\in\mathbb{Z}}$ with 
$a_{c}=\gamma$ if $c<0$, $\beta$ if $c=0$ and $\alpha$ otherwise.
Moreover, we put 
$R^{(m)}_{p,q}:=1$ if $p=0$ or $q=0$.
This can be illustrated for $m\ge 0$ by using Young tableau as follows.
\[
\ytableausetup{centertableaux, boxsize=1em}
R^{(m)}_{p,q}
=
\zeta^{\ast}_{[\,p\,|\,q\,]}
\left(\
\begin{ytableau}
\alpha & \tcdots & \tcdots & \tcdots & \tcdots & \tcdots & \tcdots & \alpha \\
\tvdots & & & & & & & \tvdots \\
\alpha & \tcdots & \tcdots & \tcdots & \tcdots & \tcdots & \tcdots & \alpha \\
*(gray)\beta & \alpha & \tcdots & \tcdots & \tcdots & \tcdots & \tcdots & \alpha \\
\gamma & *(gray)\beta & \alpha & \tcdots & \tcdots & \tcdots & \tcdots & \alpha \\
\tvdots & \tddots & *(gray)\tddots & \tddots & & & & \tvdots \\
\gamma & \tcdots & \gamma & *(gray)\beta & \alpha & \tcdots & \tcdots & \alpha \\
\gamma & \tcdots & \tcdots & \gamma & *(gray)\beta & \alpha & \tcdots & \alpha \\
\end{ytableau}
\ \right).
\]
Here, the all-$\beta$ diagonal of the tableau in $R^{(m)}_{p,q}$ starts from the $(m+1)$st row.
As special cases, 
\begin{equation}
\label{for:9thRectangle initial}
R^{(m)}_{p,1}(\alpha,\beta,\gamma)
=\zeta^{\ast}(\{\alpha\}^{m},\beta,\{\gamma\}^{p-m-1}),
\end{equation}
where $\{\alpha\}^m$ means $\alpha$ repeated $m$ times. 
In \cite{by}, Bachmann and the second author considered Schur multiple zeta values filled with alternating entries like a Checkerboard and showed that some Schur multiple zeta values of Checkerboard style, filled with $1$ and $3$, are given by determinants of matrices with odd single zeta values as entries. In analogy with their work, we aim to establish some relations among the values $R_{p,q}^{(m)}$.
We first observe that $R^{(m)}_{p,q}$ satisfies the following relation, which enables us to obtain an explicit expression for it by induction on $q$.

\begin{cor}
\label{square Schur MZV}
For $p,q\ge 1$ and $m\in\mathbb{Z}$, we have 
\begin{align}
\label{for:9thRectangle q}
R^{(m)}_{p,q+1}
R^{(m+1)}_{p,q-1}
&=R^{(m)}_{p,q}
R^{(m+1)}_{p,q}
-
R^{(m)}_{p-1,q}
R^{(m+1)}_{p+1,q}.
\end{align}
\end{cor}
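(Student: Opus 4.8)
The plan is to establish the identity \eqref{for:9thRectangle q} first for the $M$-truncated diagonally constant Schur multiple zeta values, where it becomes a one-line specialization of Corollary~\ref{square}, and then to descend to the regularized values $R^{(m)}_{p,q}=\zeta^{\ast}_{[\,p\,|\,q\,]}(\tau^m\ba)$ by using that the Bachmann–Charlton regularization recalled in the Appendix is compatible with products.

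Concretely, for $p,q\ge 0$ and $M\in\mathbb{N}$ write $R^{(j),M}_{p,q}:=\zeta^{M}_{[\,p\,|\,q\,]}(\tau^{j}\ba)$, with the convention $R^{(j),M}_{p,q}:=1$ when $p=0$ or $q=0$, so that $R^{(j)}_{p,q}$ is obtained from the family $\bigl(R^{(j),M}_{p,q}\bigr)_{M}$ by the regularization procedure of \cite{bc}. Taking $W=\{w_{k,c}\}_{k\in[M],\,c\in\mathbb{Z}}$ with $w_{k,c}=k^{-a_c}$ and the associated $\bu$, the specialization recalled just before Corollary~\ref{cor:Schur skew Giambelli} (cf. Theorem~\ref{thm:nnsy special} and Lemma~\ref{lemma:nnsy to fk}) gives $S^{(r+j)}_{[\,p\,|\,q\,]}(U_M(\bu))=R^{(j),M}_{p,q}$ for every $j\in\mathbb{Z}$. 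Now I would apply \eqref{for:9thRectangle} to $X=U_M(\bu)$ with $r$ replaced by $r+m+1$; rewriting the six factors by the previous identity and rearranging the three resulting terms yields
\[
R^{(m),M}_{p,q+1}\,R^{(m+1),M}_{p,q-1}
=R^{(m),M}_{p,q}\,R^{(m+1),M}_{p,q}-R^{(m),M}_{p-1,q}\,R^{(m+1),M}_{p+1,q}
\]
for all $p,q\ge 1$, all $m\in\mathbb{Z}$ and all $M\in\mathbb{N}$ (when $q=1$ or $p=1$ the empty factors match the stated conventions, since $\zeta^{M}_{[\,p\,|\,0\,]}=\zeta^{M}_{[\,0\,|\,q\,]}=1$).

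It remains to let $M\to\infty$. When $(\alpha,\beta,\gamma)$ is such that every rectangle occurring above lies in the convergence region \eqref{converge_domain} — that is, the corner entry of each of $[\,p\,|\,q\pm1\,]$, $[\,p\,|\,q\,]$, $[\,p\mp1\,|\,q\,]$ exceeds $1$ — one simply passes to the limit term by term. In general one uses that each $\zeta^{M}_{[\,p\,|\,q\,]}(\tau^m\ba)$ has an asymptotic expansion that is a polynomial in $\log M$ up to an $o(1)$ error, that the polynomial attached to a product of two such families is the product of the polynomials, and that the extraction of $\zeta^{\ast}$ from this polynomial (following \cite{bc}, in the spirit of \cite{ikz06}) is an algebra homomorphism; hence the polynomial parts of the two sides of the displayed identity agree, and applying the extraction map gives \eqref{for:9thRectangle q}. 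Alternatively, if one takes as input the regularized Jacobi–Trudi formula for $\zeta^{\ast}_{[\,p\,|\,q\,]}$ provided by \cite{bc}/the Appendix, one may instead apply the Desnanot–Jacobi adjoint matrix theorem (Theorem~\ref{djformula}) directly to that $(p+1)$-by-$(p+1)$ matrix, exactly as in the proof of Corollary~\ref{square}. The main obstacle is precisely this last passage: one must ensure that the quadratic relation survives regularization, i.e. that the products of shifted Schur multiple zeta values on both sides behave coherently under $\zeta^{M}\rightsquigarrow\zeta^{\ast}$, including in the non-admissible cases where several of the rectangles fail \eqref{converge_domain}; everything else is bookkeeping of indices.
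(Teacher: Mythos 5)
Your proposal is correct, and in the convergent case it coincides with the paper's one-line argument (specialize Corollary~\ref{square} via the lemma at the start of Section~\ref{application} and let $M\to\infty$). Where you genuinely diverge is the non-admissible case: the paper does \emph{not} pass through the truncated identity at all there; it invokes the regularized dual Jacobi--Trudi formula \eqref{for:regularized dual Jacobi Trudi} (Lemma~\ref{lem:regularized dual Jacobi Trudi}, from \cite{bc}) and applies the Desnanot--Jacobi theorem (Theorem~\ref{djformula}) directly to that determinant, exactly as in the proof of \eqref{for:9thRectangle} --- this is precisely the route you list only as an ``alternative''. Your primary route instead transfers the truncated bilinear identity through the regularization: you use the existence and uniqueness of the asymptotic polynomial $Z^{\ast}_{\lambda/\mu}(\bk;T)$ from \cite[Lemma~3.1]{bc}, the fact that a product of two families with polynomial-in-$(\log M+\gamma)$ asymptotics (error $O(\log^J M/M)$) has as its asymptotic polynomial the product of the two polynomials, and then evaluation at $T=0$; since a polynomial in $\log M+\gamma$ that tends to $0$ as $M\to\infty$ vanishes identically, the quadratic relation does survive regularization, so the step you flag as the ``main obstacle'' is in fact closed by your own argument. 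The trade-off: your route needs only the asymptotic characterization of $Z^{\ast}_{\lambda/\mu}$ (not the full regularized Jacobi--Trudi determinant of \cite[Theorem~3.3]{bc}) and yields the identity even at the level of the polynomials $Z^{\ast}(\cdot\,;T)$, whereas the paper's route is shorter and purely determinantal once \eqref{for:regularized dual Jacobi Trudi} is granted, requiring no asymptotic analysis. Only minor bookkeeping is left implicit on your side: choosing $r$, $s$, $N$ large enough that all six rectangles fit in $(s^{r'})$ for the shifted superscripts, and the observation that $S^{(\cdot)}_{\varnothing}=1$ matches the convention $R^{(\cdot)}_{p,0}=R^{(\cdot)}_{0,q}=1$, which you already note.
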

\begin{proof}
This follows directly from \eqref{for:9thRectangle} if all the series appearing above converge.
Otherwise, similarly to the proof of \eqref{for:9thRectangle},
one can prove this from the  dual Jacobi-Trudi formula \eqref{for:regularized dual Jacobi Trudi}
for $\zeta^{\ast}_{\lambda/\mu}(\ba)$
together with Theorem~\ref{djformula}.
\end{proof}

Now, for $a,b,c\ge 0$, 
we concentrate on the shape $[\,a+b+c\,|\,b\,]$:
\begin{align*}
 R^{(a)}_{a+b+c,b}
=R^{(a)}_{a+b+c,b}(\alpha,\beta,\gamma)
=
\zeta^{\ast}_{[\,a+b+c\,|\,b\,]}
\left(\ \ \ \ \ 
\begin{ytableau}
\alpha & \tcdots & \alpha \\
\tvdots & & \vdots \\
\alpha & & \tvdots \\
*(gray)\beta & \tddots & \tvdots \\
\gamma & *(gray)\tddots & \alpha \\
\tvdots & \tddots & *(gray)\beta \\
\tvdots & & \gamma \\
\tvdots & & \tvdots \\
\gamma & \tcdots & \gamma 
\end{ytableau}
\begin{tikzpicture}[overlay, remember picture]
\draw[decorate, decoration={brace, amplitude=6pt}, thick]
($(0,2.1)+(-1.5,-1.3)$)--($(0,0.3) + (-1.5,1.7)$) 
  node[midway,xshift=-12pt] {\small $a$};
\draw[decorate, decoration={brace, amplitude=6pt}, thick]
($(0,0.8)+(-1.5,-1.3)$)--($(0,-1) + (-1.5,1.7)$) 
  node[midway,xshift=-12pt] {\small $b$};
\draw[decorate, decoration={brace, amplitude=6pt}, thick]
($(0,-0.5)+(-1.5,-1.3)$)--($(0,-2.3) + (-1.5,1.7)$) 
  node[midway,xshift=-12pt] {\small $c$};
\draw[decorate, decoration={brace, amplitude=6pt}, thick]
($(0,-0.7)+(-0,-1.3)$)--($(0.3,-0.7) + (-1.6,-1.3)$) 
  node[midway,xshift=0pt,yshift=-12pt] {\small $b$};
\end{tikzpicture}
\ \right).
\ \\[2pt]
\end{align*}
Applying Corollary~\ref{square Schur MZV} inductively with respect to $b$, one can actually reach 
an explicit expression of $R^{(a)}_{a+b+c,b}$.
For example, when $(a,b,c)=(0,3,0)$,
since
\begin{align*}
 R^{(0)}_{3,3}R^{(1)}_{3,1}
=R^{(0)}_{3,2}R^{(1)}_{3,2} 
-R^{(0)}_{2,2}R^{(1)}_{4,2}
&=(R^{(0)}_{3,1}R^{(1)}_{3,1} 
-R^{(0)}_{2,1}R^{(1)}_{4,1})
(R^{(1)}_{3,1}R^{(2)}_{3,1} 
-R^{(1)}_{2,1}R^{(2)}_{4,1})\\
&\qquad
-(R^{(0)}_{2,1}R^{(1)}_{2,1} 
-R^{(0)}_{1,1}R^{(1)}_{3,1})
(R^{(1)}_{4,1}R^{(2)}_{4,1} 
-R^{(1)}_{3,1}R^{(2)}_{5,1}),
\end{align*}
dividing both sides by $R^{(1)}_{3,1}$,
we obtain the expression 
\begin{align}
\label{for:Jacobi Trudi 3}
\begin{split}
R^{(0)}_{3,3}
=\zeta^{\ast}_{[3\,|\,3]}
\left(\,
\begin{ytableau}
\beta & \alpha & \alpha \\
\gamma & \beta & \alpha \\
\gamma & \gamma & \beta
\end{ytableau}
\,\right)
&=R^{(0)}_{3,1}R^{(1)}_{3,1}R^{(2)}_{3,1}
+R^{(0)}_{2,1}R^{(1)}_{2,1}R^{(2)}_{5,1}
+R^{(0)}_{1,1}R^{(1)}_{4,1}R^{(2)}_{4,1}\\
&\qquad 
-R^{(0)}_{3,1}R^{(1)}_{2,1}R^{(2)}_{4,1}
-R^{(0)}_{2,1}R^{(1)}_{4,1}R^{(2)}_{3,1}
-R^{(0)}_{1,1}R^{(1)}_{3,1}R^{(2)}_{5,1}.
\end{split}
\end{align}
When $(\alpha,\beta,\gamma)=(2,3,2)$ or $(1,2,1)$, 
the following results on the initial values  
$R^{(a)}_{a+1+c,1}=\zeta^{\ast}(\{\alpha\}^a,\beta,\{\gamma\}^c)$ 
are obtained in \cite[Theorem~1]{z12} and in the appendix of the present paper, respectively:
\begin{align}
\label{for:Zagier}
\zeta(\{2\}^a,3,\{2\}^c)
&=2\sum^{a+c+1}_{r=1}(-1)^r
\left[\binom{2r}{2a+2}-\left(1-\frac{1}{2^{2r}}\right)
\binom{2r}{2c+1}\right]
\eta(a+c-r+1)\zeta(2r+1),\\
\label{for:regularized 121}
\zeta^{\ast}(\{1\}^a,2,\{1\}^c)
&=(-1)^c\sum^{c}_{s=0}\binom{a+c-s+1}{c-s}
\zeta(a+c-s+2)C_s,
\end{align}
where $\eta(k):=\zeta(\{2\}^k)$ for $k\ge 0$
and $\{C_s\}_{s\ge 0}$ is defined by $C_0:=1$, $C_1:=0$ and for $s\ge 2$
\[ 
C_s:=
\sum_{\substack{k_2,k_3,\ldots,k_s\ge 0\\ 2k_2+3k_3+\cdots+sk_s=s}}\frac{(-1)^{k_2+k_3+\cdots+k_s}}{k_2!k_3!\cdots k_s!}
\left(\frac{\zeta(2)}{2}\right)^{k_2}
\left(\frac{\zeta(3)}{3}\right)^{k_3}\cdots
\left(\frac{\zeta(s)}{s}\right)^{k_s}.
\]
Substituting these into the above formula, 
we have, respectively,
\begin{align*}
R^{(0)}_{3,3}(2,3,2)
&= -\frac{801675}{1024} \zeta(3) \zeta(7) \zeta(11)
 -\frac{1058211}{512} \zeta(5)^2 \zeta(11)
 +\frac{160335}{64} \eta(1) \zeta(3) \zeta(5) \zeta(11)\\
&\qquad -\frac{32067}{64} \eta(1)^2 \zeta(3)^2 \zeta(11)
 -\frac{404495}{256}  \zeta(3) \zeta(9)^2
 +\frac{1101387}{256}  \zeta(5) \zeta(7) \zeta(9)\\
&\qquad +\frac{483}{4} \eta(1)  \zeta(3) \zeta(7) \zeta(9)
-\frac{21315}{16} \eta(1)  \zeta(5)^2 \zeta(9)
 -\frac{777}{8} \eta(2)  \zeta(3) \zeta(5) \zeta(9)\\
&\qquad -\frac{1491}{16} \eta(1)^2  \zeta(3) \zeta(5) \zeta(9)
 -\frac{651}{4} \eta(1) \eta(2)  \zeta(3)^2 \zeta(9)
 +\frac{2667}{8} \eta(1)^3 \zeta(3)^2 \zeta(9)\\
&\qquad -\frac{3426525}{2048}  \zeta(7)^3
 +\frac{54873}{128} \eta(1)  \zeta(5) \zeta(7)^2
 -\frac{1575}{2} \eta(2)  \zeta(3) \zeta(7)^2\\
&\qquad +\frac{128331}{128} \eta(1)^2 \zeta(3) \zeta(7)^2
 +\frac{6705}{16} \eta(2)  \zeta(5)^2 \zeta(7)
 -\frac{6219}{16} \eta(1)^2  \zeta(5)^2 \zeta(7)\\
&\qquad +\frac{6849}{8} \eta(1) \eta(2)  \zeta(3) \zeta(5) \zeta(7)
 -\frac{17163}{16} \eta(1)^3 \zeta(3) \zeta(5) \zeta(7)
 -\frac{225}{4} \eta(2)^2  \zeta(3)^2 \zeta(7)\\
&\qquad +\frac{225}{4} \eta(1)^2 \eta(2) \zeta(3)^2 \zeta(7)
-\frac{855}{2} \eta(1) \eta(2)  \zeta(5)^3
 +495 \eta(1)^3 \zeta(5)^3\\
&\qquad +\frac{81}{2} \eta(2)^2 \zeta(3) \zeta(5)^2
-\frac{81}{2} \eta(1)^2 \eta(2) \zeta(3) \zeta(5)^2,\\
R^{(0)}_{3,3}(1,2,1)
&= -9\zeta(4)^3 -24\zeta(2)\zeta(5)^2 -20\zeta(3)^2\zeta(6)  +30\zeta(2)\zeta(4)\zeta(6)
+24\zeta(3)\zeta(4)\zeta(5). 
\end{align*}
When $(\alpha,\beta,\gamma)=(1,2,1)$, 
we obtain the following more general expression.
\begin{prop}
It holds that 
\begin{align*}
R^{(a)}_{a+b+c,b}(1,2,1)
&=(-1)^{bc+\frac{1}{2}b(b-1)}
\sum^{c+b-1}_{t_1=0}\sum^{c+b-2}_{t_2=0}\cdots \sum^{c}_{t_b=0}C_{t_1}C_{t_2}\cdots C_{t_b}\\
&\quad \times 
\det\left[\binom{a+b+c-i+j-t_i}{b+c-i-t_i}\zeta(a+b+c-i+j-t_i+1)\right]_{1\le i,j\le b}.
\end{align*}
\end{prop}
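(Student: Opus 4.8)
The plan is to turn $R^{(a)}_{a+b+c,b}(1,2,1)$ into a $b\times b$ determinant whose entries are \emph{column-shape} regularized multiple zeta values $\zeta^{\ast}(\{1\}^{\bullet},2,\{1\}^{\bullet})$, then to substitute the closed formula \eqref{for:regularized 121} for each entry, and finally to expand the determinant multilinearly in its rows.

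First I would apply the regularized dual Jacobi--Trudi formula \eqref{for:regularized dual Jacobi Trudi} to $R^{(a)}_{a+b+c,b}=\zeta^{\ast}_{[\,a+b+c\,|\,b\,]}(\tau^{a}\ba)$, where $\ba$ is the $(1,2,1)$-pattern. Since the rectangle $[\,a+b+c\,|\,b\,]=(b^{a+b+c})$ has conjugate $((a+b+c)^{b})$ of length $b$, this presents $R^{(a)}_{a+b+c,b}$ as a $b\times b$ determinant whose $(i,j)$-entry is $\zeta^{\ast}_{(1^{\,a+b+c-i+j})}(\tau^{\,a+j-1}\ba)$. (Alternatively, this determinantal reduction can be obtained by induction on $b$ directly from Corollary~\ref{square Schur MZV}, exactly as in the derivation of \eqref{for:Jacobi Trudi 3}; the two routes coincide via the Desnanot--Jacobi identity.) Using that for a column shape the regularized Schur multiple zeta value reduces to the stuffle-regularized multiple zeta value of \cite{ikz06} (cf.\ \eqref{zetazetas}), I would then read off that entry: the column $(1^{d})$ with $d=a+b+c-i+j$ has contents $0,-1,\dots,-(d-1)$, and since $0\le a+j-1\le d-1$ for all $1\le i,j\le b$, the pattern $\tau^{a+j-1}\ba$ places a single $2$ strictly inside that column, yielding the index $(\{1\}^{a+j-1},2,\{1\}^{b+c-i})$. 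Hence the $(i,j)$-entry equals $\zeta^{\ast}(\{1\}^{a+j-1},2,\{1\}^{b+c-i})$.

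Next I would substitute \eqref{for:regularized 121}, with its parameters $a$ and $c$ replaced by $a+j-1$ and $b+c-i$ respectively. Writing $n_{0}:=a+b+c$, the $(i,j)$-entry becomes
\[
(-1)^{b+c-i}\sum_{s=0}^{b+c-i}\binom{n_{0}-i+j-s}{b+c-i-s}\,\zeta(n_{0}-i+j-s+1)\,C_{s}.
\]
Pulling the factor $(-1)^{b+c-i}$ out of the $i$-th row of the determinant produces the scalar $\prod_{i=1}^{b}(-1)^{b+c-i}=(-1)^{\binom{b}{2}+bc}=(-1)^{bc+\frac12 b(b-1)}$, which is precisely the prefactor in the statement. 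Expanding the remaining determinant by multilinearity in its rows — the $i$-th row being a sum over $s=0,1,\dots,b+c-i$, which I relabel $t_{i}$ — then gives
\[
\sum_{t_{1}=0}^{c+b-1}\sum_{t_{2}=0}^{c+b-2}\cdots\sum_{t_{b}=0}^{c}C_{t_{1}}C_{t_{2}}\cdots C_{t_{b}}\,
\det\left[\binom{n_{0}-i+j-t_{i}}{b+c-i-t_{i}}\zeta(n_{0}-i+j-t_{i}+1)\right]_{1\le i,j\le b},
\]
and since $n_{0}-i+j-t_{i}=a+b+c-i+j-t_{i}$ this is exactly the asserted formula.

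The steps above are essentially bookkeeping once the structural input is in place; the point that needs care is the use of the \emph{regularized} dual Jacobi--Trudi formula rather than the convergent one. This is genuinely necessary here: for $(\alpha,\beta,\gamma)=(1,2,1)$ the series $R^{(m)}_{p,q}$ diverges unless $m=p-q$ (the unique corner of $(q^{p})$ carries the weight $a_{m+q-p}$, which exceeds $1$ only when $m=p-q$), so the entire computation lives in the regularized setting of \cite{bc} and one must check that multilinearity and the determinantal identities remain valid there. A minor secondary point is the elementary index check $0\le a+j-1\le d-1$ guaranteeing that the translated pattern always keeps its single $2$ inside the relevant column rather than sliding off it; both checks are short but should be recorded explicitly.
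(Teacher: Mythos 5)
Your proposal is correct and follows essentially the same route as the paper: the paper's proof is precisely to specialize the regularized dual Jacobi--Trudi formula \eqref{for:regularized dual Jacobi Trudi} to the rectangle, obtaining $R^{(a)}_{a+b+c,b}=\det\bigl[\zeta^{\ast}(\{\alpha\}^{a+j-1},\beta,\{\gamma\}^{c+b-i})\bigr]_{1\le i,j\le b}$, and then substitute \eqref{for:regularized 121}. Your additional bookkeeping (extracting $(-1)^{b+c-i}$ from each row to get the prefactor $(-1)^{bc+\frac12 b(b-1)}$ and the multilinear expansion in $t_i$) just makes explicit what the paper leaves implicit, and your remarks on the need for regularization and the placement of the single $2$ in each column are accurate.
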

\begin{proof}
This is obtained by substituting \eqref{for:regularized 121} into the identity  
\begin{align}
\label{for:dual JT for rectangle shape}
 R^{(a)}_{a+b+c,b}
=\det\left[\zeta^{\ast}(\{\alpha\}^{a+j-1},\beta,\{\gamma\}^{c+b-i})\right]_{1\le i,j\le b},
\end{align}
which is a special case of 
\eqref{for:regularized dual Jacobi Trudi} 
and is a generalization of \eqref{for:Jacobi Trudi 3}.
\end{proof}

\begin{example}
When $b=2$ with $c=0,1$, 
we have, respectively,  
\begin{align*}
R_{a+2,2}^{(a)}
&=-(a+2)\zeta(a+3)^2+(a+3)\zeta(a+2)\zeta(a+4),\\
R^{(a)}_{a+3,2}
&=-(a+3)\binom{a+3}{2}\zeta(a+4)^2
+(a+2)\binom{a+4}{2}\zeta(a+3)\zeta(a+5)\\
&\qquad +\frac{1}{2}(a+3)\zeta(2)\zeta(a+2)\zeta(a+4)
-\frac{1}{2}(a+2)\zeta(2)\zeta(a+3)^2.
\end{align*}
Furthermore, 
when $b=3$ with $c=0$, we have 
\begin{align*}
R_{a+3,3}^{(a)}
&=-(a+3)\binom{a+3}{2}\zeta(a+4)^3
-(a+4)\binom{a+4}{2}\zeta(a+2)\zeta(a+5)^2\\
&\qquad -(a+2)\binom{a+5}{2}\zeta(a+3)^2\zeta(a+6)\\
&\qquad +3\binom{a+5}{3}\zeta(a+2)\zeta(a+4)\zeta(a+6)
+6\binom{a+4}{3}\zeta(a+3)\zeta(a+4)\zeta(a+5).
\end{align*}
\end{example}

For general $\alpha,\beta,\gamma$,
it is in general difficult to obtain an explicit expression for $R^{(a)}_{a+b+c,b}$.
As a possible approach, we consider generating functions. 
Let
\[
 F(x,z)=F(\alpha,\beta,\gamma;x,z)
:=\sum_{a,c\ge 0}\zeta^{\ast}(\{\alpha\}^a,\beta,\{\gamma\}^c)x^az^c.
\]
For example, from \cite[Proposition 1]{z12} and 
Theorem~\ref{thm:zast}, respectively, we have
\begin{align*}
-x^2zF(2,3,2;-x^2,-z^2)
&=\frac{\sin\pi z}{\pi}
{}_3F_{2}^{\prime}
\left(\left.\begin{array}{c}
x,-x, 0\\
1+z, 1-z
\end{array} \right\rvert\, 1\right),\\
xF(1,2,1;x,z)
&=-\frac{\psi(z-x+1)-\psi(z+1)}{\Gamma(z+1)e^{\gamma z}}.
\end{align*}
Here, the second factor of the right-hand side of the first equation is the $y$-derivative at $y=0$  of the generalized hypergeometric function 
\begin{align*}
{}_3F_{2}
\left(\left.\begin{array}{c}
x,-x, y\\
1+z, 1-z
\end{array} \right\rvert\, 1\right)
=
\sum_{m=0}^{\infty} \frac{(x)_{m}(-x)_{m}(y)_{m}}{(1+z)_{m}(1-z)_{m}} \frac{1}{m!},
\end{align*}
where $(a)_{m}:=a(a+1) \cdots(a+m-1)$ denotes the Pochhammer symbol,
and $\Gamma(z)$ and $\psi(z):=\frac{d}{dz}\log\Gamma(z)$ are the gamma and the digamma function, respectively. 
For $b\ge 2$, we consider the generating function 
of $R^{(a)}_{a+b+c,b}$ of the form 
\begin{align*}
\Phi_b(x,z)=\Phi_b(\alpha,\beta,\gamma;x,z)
:=x^{\frac{1}{2}b(b-1)}z^{\frac{1}{2}(b-1)(b-2)}
\sum_{a,c\ge 0}R^{(a)}_{a+b+c,b}x^{(b-1)a}z^{(b-1)c}.
\end{align*}
The rest of this section is devoted to expressing $\Phi_b(x,z)$ in terms of $F(x,z)$.

\begin{lemma}
\label{lem:hyper_c}
Let $b\in\mathbb{Z}_{\ge 2}$ and $k_1,\ldots,k_b,\ell_1,\ldots,\ell_b\in\mathbb{Z}_{\ge 0}$.
Put $k=k_1+\cdots+k_b$ and $\ell=\ell_1+\cdots+\ell_b$.
Assume that there exist $i,j\in [b]$ such that $k_i=\ell_j=0$.
Then, we have 
\begin{equation}
\label{for:generating function product}
\begin{split}
&\sum_{a,c\ge 0}
\prod^{b}_{i=1}\zeta^{\ast}(\{\alpha\}^{a+k_i},\beta,\{\gamma\}^{c+\ell_i})\cdot x^{(b-1)a+k-k_1}
z^{(b-1)c+\ell-\ell_1}\\
&=\frac{1}{(2\pi i)^{2(b-1)}}\int_{\mathcal{C}}
F(x_2\cdots x_b,z_2\cdots z_b)\prod^{b}_{i=2}
x_i^{k_i-k_1-1}
z_i^{\ell_i-\ell_1-1}
F\left(\frac{x}{x_i},\frac{z}{z_i}\right)dx_idz_i,
\end{split}
\end{equation}
where $\mathcal{C}$ is the positively oriented product contour
\[
\mathcal{C} 
:=\{ |x_2| = 1 \} \times \cdots 
\times \{ |x_b| = 1 \} 
\times \{ |z_2| = 1 \} \times \cdots 
\times \{ |z_b| = 1 \}.
\]
\end{lemma}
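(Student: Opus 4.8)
The plan is to establish \eqref{for:generating function product} by inserting the power‑series definition of each copy of $F$ appearing on the right‑hand side, interchanging the summations with the multiple contour integral, and then evaluating the latter via the elementary identity $\frac{1}{2\pi i}\oint_{|w|=1}w^{n-1}\,dw=\delta_{n,0}$ for $n\in\mathbb{Z}$. First I would write
\[
F(x_2\cdots x_b,z_2\cdots z_b)=\sum_{a_1,c_1\ge 0}\zeta^{\ast}(\{\alpha\}^{a_1},\beta,\{\gamma\}^{c_1})\,(x_2\cdots x_b)^{a_1}(z_2\cdots z_b)^{c_1},
\]
and, for $i=2,\dots,b$,
\[
F\!\left(\tfrac{x}{x_i},\tfrac{z}{z_i}\right)=\sum_{a_i,c_i\ge 0}\zeta^{\ast}(\{\alpha\}^{a_i},\beta,\{\gamma\}^{c_i})\,x^{a_i}z^{c_i}\,x_i^{-a_i}z_i^{-c_i}.
\]
Substituting these into the right‑hand side of \eqref{for:generating function product} and collecting powers, the integrand becomes a sum over $a_1,\dots,a_b,c_1,\dots,c_b\ge 0$ of $\prod_{i=1}^{b}\zeta^{\ast}(\{\alpha\}^{a_i},\beta,\{\gamma\}^{c_i})\cdot x^{a_2+\cdots+a_b}z^{c_2+\cdots+c_b}$ multiplied by $\prod_{i=2}^{b}x_i^{\,a_1-a_i+k_i-k_1-1}\,z_i^{\,c_1-c_i+\ell_i-\ell_1-1}$.

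Next I would interchange the sum and the integral and integrate term by term over $\mathcal{C}$. Each $x_i$‑integration then contributes the factor $\delta_{a_1-a_i+k_i-k_1,\,0}$ and each $z_i$‑integration the factor $\delta_{c_1-c_i+\ell_i-\ell_1,\,0}$, forcing $a_i=a_1+k_i-k_1$ and $c_i=c_1+\ell_i-\ell_1$ for $i=2,\dots,b$. The surviving sum runs over $a_1,c_1\ge 0$ subject to $a_1+k_i-k_1\ge 0$ and $c_1+\ell_i-\ell_1\ge 0$ for all $i$. Here the hypothesis enters: picking $j$ with $k_j=0$, the constraint coming from that index reads $a_1\ge k_1$ (and if $j=1$ it is just $a_1\ge 0$), and this single inequality already implies $a_1+k_i-k_1\ge k_i\ge 0$ for every $i$; symmetrically, the index with $\ell_j=0$ shows that $c_1\ge\ell_1$ captures all the $c$‑constraints. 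Writing $a=a_1-k_1\ge 0$ and $c=c_1-\ell_1\ge 0$, so that $a_i=a+k_i$ and $c_i=c+\ell_i$ for all $i$ (including $i=1$), the right‑hand side becomes
\[
\sum_{a,c\ge 0}\prod_{i=1}^{b}\zeta^{\ast}(\{\alpha\}^{a+k_i},\beta,\{\gamma\}^{c+\ell_i})\,x^{(b-1)a+k-k_1}\,z^{(b-1)c+\ell-\ell_1},
\]
using $\sum_{i=2}^{b}(a+k_i)=(b-1)a+k-k_1$ and the analogous identity in $z$, which is exactly the left‑hand side of \eqref{for:generating function product}.

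The hard part will be the justification of interchanging the infinite summations with the multiple integral: one must check that the radii can be chosen so that, on the relevant product contour, $F(x_2\cdots x_b,z_2\cdots z_b)$ and each $F(x/x_i,z/z_i)$ lie simultaneously inside their domain of absolute convergence, which holds once $|x|$ and $|z|$ are small enough; alternatively, \eqref{for:generating function product} may simply be read as an identity of formal power series in $x$ and $z$, with the contour integrals interpreted as coefficient‑extraction functionals, in which case the manipulation is purely formal. Everything else is bookkeeping of exponents, and the role of the assumption that $k_i=\ell_j=0$ for some $i,j$ is precisely to collapse the nonnegativity constraints on the summation indices to the two clean conditions $a\ge 0$ and $c\ge 0$.
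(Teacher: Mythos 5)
Your proposal is correct and follows essentially the same route as the paper: expand each $F$ into its defining series, integrate term by term so that each contour integral forces $a_i=a_1+k_i-k_1$ and $c_i=c_1+\ell_i-\ell_1$, and then use the hypothesis $k_i=\ell_j=0$ to reduce the surviving constraints to $a_1\ge k_1$, $c_1\ge \ell_1$ and re-index (the paper organizes this last step via the convention $\zeta^{\ast}(\{\alpha\}^{a},\beta,\{\gamma\}^{c})=0$ for negative $a$ or $c$, which is equivalent to your constraint bookkeeping). Your explicit remark on justifying the interchange of sum and integral, or reading the identity formally as coefficient extraction, is in fact more careful than the paper, which performs the interchange without comment.
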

\begin{proof}
For simplicity, put 
$Z(a,c)=\zeta^{\ast}(\{\alpha\}^{a},\beta,\{\gamma\}^{c})$
and understand that $Z(a,c)=0$ whenever $a<0$ or $c<0$.
We see that the right-hand side of \eqref{for:generating function product} equals 
\begin{align*}
&\frac{1}{(2\pi i)^{2(b-1)}}
\int_{\mathcal{C}}
\sum_{a,c\ge 0}\sum_{\substack{a_i,c_i\ge 0 \\ 2\le i\le b}}Z(a,b)x^{\sum^b_{i=2}a_i}z^{\sum^{b}_{i=2}c_i}
\prod^{b}_{i=2}Z(a_i,c_i)
x_i^{a+k_i-k_1-1-a_i}
z_i^{c+\ell_i-\ell_1-1-c_i}
dx_idz_i\\
&=\sum_{a,c\ge 0}
Z(a,b)\prod^{b}_{i=2}Z(a+k_i-k_1,c+\ell_i-\ell_1)
\cdot x^{\sum^{b}_{i=2}(a+k_i-k_1)}
y^{\sum^{b}_{i=2}(c+\ell_i-\ell_1)}\\
&=\sum_{a\ge -k_1,c\ge -\ell_1}
Z(a+k_1,c+\ell_1)\prod^{b}_{i=2}Z(a+k_i,c+\ell_i)
\cdot x^{\sum^{b}_{i=2}(a+k_i)}
z^{\sum^{b}_{i=2}(c+\ell_i)}\\
&=\sum_{a,c\ge 0}\prod^{b}_{i=1}Z(a+k_i,c+\ell_i)
\cdot x^{(b-1)a+k-k_1}
z^{(b-1)c+\ell-\ell_1}.
\end{align*}
In the last equality, we have used the assumption.
\end{proof}

\begin{theorem}
For $b\ge 2$, we have
\begin{align}
\label{for:generating function of R}
 \Phi_b(x,z)
=\frac{1}{(2\pi i)^{2(b-1)}}\int_{\mathcal{C}}
\prod_{2\le i<j\le b}(x_j-x_i)\cdot \frac{F(X,Z)}{X^b}
\prod^{b}_{i=2}\frac{Xx_i-x}{z^i_i}
F\left(\frac{x}{x_i},\frac{z}{z_i}\right)dx_idz_i,
\end{align}
where $X=x_2\cdots x_b$ and $Z=z_2\cdots z_b$.
\end{theorem}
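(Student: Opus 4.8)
The plan is to prove \eqref{for:generating function of R} by combining the dual Jacobi--Trudi expression \eqref{for:dual JT for rectangle shape}, Lemma~\ref{lem:hyper_c}, and a single Vandermonde-type identity. First I would expand the $b\times b$ determinant in \eqref{for:dual JT for rectangle shape} by the Leibniz rule,
\[
 R^{(a)}_{a+b+c,b}
 =\sum_{\sigma\in\mathfrak{S}_b}\operatorname{sgn}(\sigma)\prod_{i=1}^{b}\zeta^{\ast}\bigl(\{\alpha\}^{a+\sigma(i)-1},\beta,\{\gamma\}^{c+b-i}\bigr),
\]
where $\mathfrak{S}_b$ denotes the symmetric group on $b$ letters. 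Substituting this into the definition of $\Phi_b(x,z)$ and interchanging the finite sum over $\sigma$ with the sum over $a,c\ge 0$, it then suffices, for each fixed $\sigma$, to evaluate the double series $\sum_{a,c\ge 0}\prod_{i=1}^{b}\zeta^{\ast}(\{\alpha\}^{a+\sigma(i)-1},\beta,\{\gamma\}^{c+b-i})\,x^{(b-1)a}z^{(b-1)c}$, which is exactly the object produced by Lemma~\ref{lem:hyper_c} with the choices $k_i=\sigma(i)-1$ and $\ell_i=b-i$; the hypotheses of that lemma are met because $\sigma$ attains the value $1$ (so some $k_i=0$) and because $\ell_b=0$.

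The next step is the exponent bookkeeping. For this choice one has $k=\sum_i k_i=\tfrac12 b(b-1)$, $\ell=\sum_i\ell_i=\tfrac12 b(b-1)$, $\ell_1=b-1$, and $k_1=\sigma(1)-1$, so the built-in monomial $x^{\,k-k_1}z^{\,\ell-\ell_1}$ in Lemma~\ref{lem:hyper_c} equals $x^{\frac12 b(b-1)-(\sigma(1)-1)}z^{\frac12(b-1)(b-2)}$. Dividing this out and multiplying by the prefactor $x^{\frac12 b(b-1)}z^{\frac12(b-1)(b-2)}$ appearing in the definition of $\Phi_b$, all powers of $z$ cancel and the powers of $x$ collapse to the single factor $x^{\sigma(1)-1}$. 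Since the remaining pieces of the integral from Lemma~\ref{lem:hyper_c}, namely $F(X,Z)$ with $X=x_2\cdots x_b$, $Z=z_2\cdots z_b$, and $\prod_{i=2}^{b}z_i^{\ell_i-\ell_1-1}F(x/x_i,z/z_i)=\prod_{i=2}^{b}z_i^{-i}F(x/x_i,z/z_i)$, are independent of $\sigma$, one can pull the sum over $\sigma$ back inside the contour integral, where it acts only on the $x_i$-monomials:
\[
 \Phi_b(x,z)=\frac{1}{(2\pi i)^{2(b-1)}}\int_{\mathcal C}F(X,Z)\Bigl(\sum_{\sigma\in\mathfrak{S}_b}\operatorname{sgn}(\sigma)\,x^{\sigma(1)-1}\prod_{i=2}^{b}x_i^{\sigma(i)-\sigma(1)-1}\Bigr)\prod_{i=2}^{b}\frac{F(x/x_i,z/z_i)}{z_i^{i}}\,dx_i\,dz_i.
\]

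The heart of the argument is the purely algebraic evaluation of this inner alternating sum. Writing $\prod_{i=2}^{b}x_i^{\sigma(i)-\sigma(1)-1}=X^{-\sigma(1)-1}\prod_{i=2}^{b}x_i^{\sigma(i)}$ and setting $y_1=x/X$, $y_i=x_i$ for $2\le i\le b$, the sum becomes $x^{-1}X^{-1}\sum_{\sigma\in\mathfrak{S}_b}\operatorname{sgn}(\sigma)\prod_{i=1}^{b}y_i^{\sigma(i)}=x^{-1}X^{-1}\det[y_i^{j}]_{1\le i,j\le b}$, which by the Vandermonde determinant equals $x^{-1}X^{-1}\bigl(\prod_{i=1}^{b}y_i\bigr)\prod_{1\le i<j\le b}(y_j-y_i)$. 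Using $\prod_{i=1}^{b}y_i=(x/X)\cdot X=x$ and $\prod_{j=2}^{b}(y_j-y_1)=X^{-(b-1)}\prod_{j=2}^{b}(Xx_j-x)$, this simplifies to $X^{-b}\prod_{i=2}^{b}(Xx_i-x)\prod_{2\le i<j\le b}(x_j-x_i)$; inserting it into the integral and recombining factors gives exactly \eqref{for:generating function of R}. I expect the main obstacle to lie not in any single deep step but in keeping the several competing sources of $x$- and $z$-powers aligned --- the $\Phi_b$-prefactor, the shift $x^{k-k_1}z^{\ell-\ell_1}$ concealed inside Lemma~\ref{lem:hyper_c}, and the $\sigma$-dependent reindexing $k_i=\sigma(i)-1$ --- so that they telescope precisely as stated; once that bookkeeping is settled, the Vandermonde identity closes the proof.
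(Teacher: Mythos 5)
Your proposal is correct and follows essentially the same route as the paper's proof: expand the dual Jacobi--Trudi determinant \eqref{for:dual JT for rectangle shape} over $S_b$, apply Lemma~\ref{lem:hyper_c} with $k_i=\sigma(i)-1$, $\ell_i=b-i$, cancel the $x$- and $z$-powers against the prefactor of $\Phi_b$, and evaluate the resulting alternating sum by the Vandermonde determinant. Your explicit check of the hypothesis of Lemma~\ref{lem:hyper_c} (some $k_i=0$ since $\sigma$ attains $1$, and $\ell_b=0$) is a small point the paper leaves implicit, but otherwise the two arguments coincide.
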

\begin{proof}
Put $\varphi_b(x,z):=x^{-\frac{1}{2}b(b-1)}z^{-\frac{1}{2}(b-1)(b-2)}\Phi_b(x,z)$.
Then, from \eqref{for:dual JT for rectangle shape}, we have 
\begin{align*}
\varphi_b(x,z)
&=\sum_{a,c\ge 0}R^{(a)}_{a+b+c,b}x^{(b-1)a}z^{(b-1)c}\\
&=\sum_{\sigma\in S_b}\mathrm{sgn}(\sigma)\sum_{a,c\ge 0}\left(\prod^{b}_{i=1}\zeta^{\ast}(\{\alpha\}^{a+\sigma(i)-1},\beta,\{\gamma\}^{c+b-i})\right)x^{(b-1)a}z^{(b-1)c},
\end{align*}
where $S_b$ denotes the symmetric group of degree $b$, and $\mathrm{sgn}(\sigma)$ is the signature of $\sigma \in S_b$. 
From \eqref{for:generating function product} with $k_i=\sigma(i)-1$ and $\ell_i=b-i$, which implies  
$k=\ell=\frac{1}{2}b(b-1)$, we see that
\begin{align*}
&\sum_{a,c\ge 0}\left(\prod^{b}_{i=1}\zeta^{\ast}(\{\alpha\}^{a+\sigma(i)-1},\beta,\{\gamma\}^{c+b-i})\right)x^{(b-1)a}z^{(b-1)c}\\
&=\frac{1}{(2\pi i)^{2(b-1)}}\int_{\mathcal{C}}
\frac{F(X,Z)}{x^{\frac{1}{2}b(b-1)-(\sigma(1)-1)}z^{\frac{1}{2}b(b-1)-(b-1)}}\prod^{b}_{i=2}
x_i^{\sigma(i)-1-\sigma(1)}
z_i^{-i}
F\left(\frac{x}{x_i},\frac{z}{z_i}\right)dx_idz_i\\
&=x^{-\frac{1}{2}b(b-1)}z^{-\frac{1}{2}(b-1)(b-2)}
\frac{1}{(2\pi i)^{2(b-1)}}\int_{\mathcal{C}}
\frac{F(X,Z)}{X}\left(\frac{x}{X}\right)^{\sigma(1)-1}\prod^{b}_{i=2}
x_i^{\sigma(i)-1}
\prod^{b}_{i=2}\frac{1}{z^i_i}
F\left(\frac{x}{x_i},\frac{z}{z_i}\right)dx_idz_i.
\end{align*}
Therefore, by applying the Vandermonde determinant, we obtain
\begin{align*}
&\Phi_b(x,z)\\
&=\frac{1}{(2\pi i)^{2(b-1)}}\int_{\mathcal{C}}
\frac{F(X,Z)}{X}
\left(\sum_{\sigma\in S_b}\mathrm{sgn}(\sigma)\left(\frac{x}{X}\right)^{\sigma(1)-1}\prod^{b}_{i=2}
x_i^{\sigma(i)-1}\right)
\prod^{b}_{i=2}\frac{1}{z^i_i}
F\left(\frac{x}{x_i},\frac{z}{z_i}\right)dx_idz_i\\
&=\frac{1}{(2\pi i)^{2(b-1)}}\int_{\mathcal{C}}
\frac{F(X,Z)}{X}\prod_{2\le i<j\le b}(x_j-x_i)\prod^{b}_{i=2}\left(x_i-\frac{x}{X}\right)
\prod^{b}_{i=2}\frac{1}{z^i_i}
F\left(\frac{x}{x_i},\frac{z}{z_i}\right)dx_idz_i\\
&=\frac{1}{(2\pi i)^{2(b-1)}}\int_{\mathcal{C}}
\prod_{2\le i<j\le b}(x_j-x_i)\cdot 
\frac{F(X,Z)}{X^b}
\prod^{b}_{i=2}\frac{Xx_i-x}{z^i_i}
F\left(\frac{x}{x_i},\frac{z}{z_i}\right)dx_idz_i.
\end{align*}
This completes the proof.
\end{proof}

\begin{remark}
Although all the results above concern vertical rectangles, we can naturally consider horizontal rectangles as well.
For instance, for $c\ge 0$, we have 
\begin{align*}
R_{2,c+2}^{(0)}(2,2,1)
&=\zeta_{[\,2\,|\,c+2\,]}
\left(\,
\begin{ytableau}
2 & 2 & 2 & \tcdots & 2\\
1 & 2 & 2 & \tcdots & 2
\end{ytableau}
\begin{tikzpicture}[overlay, remember picture]
\draw[decorate, decoration={brace, amplitude=6pt}, thick]
($(0,0.8)+(-0,-1.3)$)--($(0.3,0.8) + (-1.6,-1.3)$) 
  node[midway,xshift=0pt,yshift=-12pt] {\small $c$};
\end{tikzpicture}
\,\right)\\[20pt]
&=4(1-2^{-2c-3})\zeta(2c+3)\zeta(2c+4)
-4(1-2^{-2c-1})\zeta(2c+2)\zeta(2c+5),\\
R_{2,c+3}^{(-1)}(2,2,1)
&=\zeta_{[\,2\,|\,c+3\,]}
\left(\,
\begin{ytableau}
1& 2 & 2 & 2 & \tcdots & 2\\
1& 1 & 2 & 2 & \tcdots & 2
\end{ytableau}
\begin{tikzpicture}[overlay, remember picture]
\draw[decorate, decoration={brace, amplitude=6pt}, thick]
($(0,0.8)+(-0,-1.3)$)--($(0.3,0.8) + (-1.6,-1.3)$) 
  node[midway,xshift=0pt,yshift=-12pt] {\small $c$};
\end{tikzpicture}
\,\right)\\[20pt]
&=8\left(\zeta^{\star}(1,2c+3)\zeta(2c+5)-\zeta^{\star}(1,2c+5)\zeta(2c+3)\right)\\
&\qquad -4\left(\zeta(2c+4)\zeta(2c+5)-\zeta(2c+3)\zeta(2c+6)\right).
\end{align*}
Actually, from \eqref{for:9thRectangle q}, 
it holds that 
\begin{align*}
R^{(-a)}_{2,a+2+c}
&=R^{(-a-1)}_{1,(a+1)+1+c}R^{(-a)}_{1,a+1+(c+1)}
-R^{(-a-1)}_{1,(a+1)+1+(c+1)}R^{(-a)}_{1,a+1+c}\\
&=\zeta^{\star}(\{1\}^{a+1},\{2\}^{c+1})
\zeta^{\star}(\{1\}^a,\{2\}^{c+2})
-\zeta^{\star}(\{1\}^{a+1},\{2\}^{c+2})
\zeta^{\star}(\{1\}^{a},\{2\}^{c+1}).
\end{align*}
Now the desired formulas follow from this identity 
with $a=0,1$ and the facts
$\zeta^{\star}(\{2\}^{c})=2(1-2^{-2c+1})\zeta(2c)$
and 
\[
\zeta^{\star}(1,\{2\}^{c})
=2\zeta(2c+1),\quad 
\zeta^{\star}(\{1\}^2,\{2\}^c)
=4\zeta^{\star}(1,2c+1)-2\zeta(2c+2),
\]
which are obtained in \cite{zl05} and \cite[Lemma~5]{oz08}, respectively.
We remark that a general formula for 
$\zeta^{\star}(\{1\}^{a},\{2\}^{c})$
is obtained in \cite[Theorem~1.3]{ce23}.




\end{remark}

\appendix 

\section{}

\subsection{Regularized Schur multiple zeta values}

We briefly review the regularization of multiple zeta values, following \cite{ikz06}. 
Let $\mathfrak{H}:=\mathbb{Q}\langle e_0,e_1\rangle$ be the
noncommutative polynomial algebra in $e_0,e_1$ over $\mathbb{Q}$, called the Hoffman algebra, and 
$\mathfrak{H}^1:=\mathbb{Q}+ e_1\mathfrak{H}$ and $\mathfrak{H}^0:=\mathbb{Q}+ e_1\mathfrak{H}e_0$. Let $Z:\mathfrak{H}^0\to\mathbb{R}$ be the evaluation map defined by $Z(z_{k_1}\cdots z_{k_d}):=\zeta(k_1,\ldots,k_d)$,
where $z_k:=e_1e^{k-1}_0$ for $k\in\mathbb{N}$.
Let $\ast$ and $\shuffle$ be the stuffle (or harmonic) and shuffle product on $\mathfrak{H}^1$, respectively, which make $\mathfrak{H}^1_{\ast}:=(\mathfrak{H}^1,\ast)$ and $\mathfrak{H}^1_{\shuffle}:=(\mathfrak{H}^1,\shuffle)$ commutative algebras. We know that there are unique $\mathbb{Q}$-algebra homomorphisms $Z^{\ast}:\mathfrak{H}^1_{\ast}\to\mathbb{R}[T]$ and $Z^{\shuffle}:\mathfrak{H}^1_{\shuffle}\to\mathbb{R}[T]$ satisfying  
$Z^{\ast}|_{\mathfrak{H}^0}=Z^{\shuffle}|_{\mathfrak{H}^0}=Z$ and $Z^{\ast}(z_1)=Z^{\shuffle}(z_1)=T$. 
Explicitly, if $w\in \mathfrak{H}^{1}$ is expressed as 
\[
w
=\sum^m_{i=0}u_i\ast z^{\ast \,i}_1
=\sum^n_{j=0}v_i\shuffle z^{\shuffle \,i}_1
\]
for some $u_i,v_i\in \mathfrak{H}^{0}$, 
where $z^{\ast \,i}_1=\underbrace{z_1 \ast \cdots \ast  z_1}_{i}$ and $z^{\shuffle \,i}_1=\underbrace{z_1 \shuffle \cdots \shuffle z_1}_{i}$), then we have 
\begin{equation}
\label{for:regularizations}
Z^{\ast}(w)=\sum^m_{i=0}Z(u_i)T^i,\quad 
Z^{\shuffle}(w)=\sum^n_{i=0}Z(v_i)T^i.
\end{equation}
Now, for an index $\bk=(k_1,\ldots,k_d)\in \mathbb{N}^d$, 
the stuffle (or harmonic) and shuffle regularized multiple zeta values are respectively defined by 
$Z^{\ast}(\bk;T):=Z^{\ast}(z_{k_1}\cdots z_{k_d})$ and $Z^{\shuffle}(\bk;T):=Z^{\shuffle}(z_{k_1}\cdots z_{k_r})$. 
In particular, we put 
$\zeta^{\ast}(\bk):=Z^{\ast}(\bk;0)$ and 
$\zeta^{\shuffle}(\bk):=Z^{\shuffle}(\bk;0)$.
Remark that $Z^{\ast}(\bk;T)$ can be characterized as the asymptotic property
\[
 \zeta^M(\bk)
=Z^{\ast}(\bk;\log M+\gamma)+O\left(\frac{\log^J M}{M}\right)
\ \ \text{for some $J>0$ as $M\to\infty$},
\]
where $\gamma$ is the Euler constant.
As an extension of this, 
Bachmann and Charlton \cite[Lemma~3.1]{bc} showed that 
for any $\bk\in \T(\lambda/\mu,\mathbb{N})$  
there exists a unique polynomial
$Z^{\ast}_{\lambda/\mu}(\bk;T)\in\mathbb{R}[T]$ satisfying  
\[
 \zeta^{M}_{\lambda/\mu}(\bk)
=Z^{\ast}_{\lambda/\mu}(\bk;\log M+\gamma)+O\left(\frac{\log^J M}{M}\right)
\ \ \text{for some $J>0$ as $M\to\infty$}.
\]
We call $Z^{\ast}_{\lambda/\mu}(\bk;T)$ the regularized Schur multiple zeta values.
Note that $Z^{\ast}_{\lambda/\mu}(\bk;T)=\zeta_{\lambda/\mu}(\bk)$ if $\bk\in W_{\lambda/\mu}$, that is, when $\zeta_{\lambda/\mu}(\bk)$ converges. 
Moreover, by definition, 
\[
\ytableausetup{boxsize=15pt,aligntableaux=center}
Z^{\ast}_{(1^d)}
\left(\ 
\begin{ytableau}
k_1\\
\tvdots\\
k_d
\end{ytableau}
\ ;T \right)
=Z^{\ast}(k_1,\ldots,k_d;T).
\]
Similarly to the above, we define $\zeta^{\ast}_{\lambda/\mu}(\bk):=Z^{\ast}_{\lambda/\mu}(\bk;0)$,
which was one of our target in the previous section.
When $\bk\in \T^{\diag}(\lambda/\mu,\mathbb{N})$,
that is, $\bk=\ba|_{\lambda/\mu}$ for some 
$\ba=(a_c)_{c\in\mathbb{Z}}$ with $a_c\in\mathbb{N}$,
$Z^{\ast}_{\lambda/\mu}(\bk;T)$
can be calculated 
by using the dual Jacobi–Trudi formula as follows.
Here, we also write $Z^{\ast}_{\lambda/\mu}(\ba|_{\lambda/\mu};T)$ simply as 
$Z^{\ast}_{\lambda/\mu}(\ba;T)$.

\begin{lemma}[{A special case of \cite[Theorem~3.3]{bc}}]
\label{lem:regularized dual Jacobi Trudi}
For $\ba=(a_c)_{c\in\mathbb{Z}}$ with $a_c\in\mathbb{N}$,
we have 
\begin{align*}
Z^{\ast}_{\lambda/\mu}(\ba;T)
=\det \left[Z^{\ast}_{\lambda'_i-\mu'_j-i+j}(\tau^{-\mu'_j+j-1}\ba\,;T)\right]_{1\le i,j\le \ell(\lambda')}.
\end{align*}
In particular,
\begin{align}
\label{for:regularized dual Jacobi Trudi}
\zeta^{\ast}_{\lambda/\mu}(\ba)
=\det \left[\zeta^{\ast}_{\lambda'_i-\mu'_j-i+j}(\tau^{-\mu'_j+j-1}\ba)\right]_{1\le i,j\le \ell(\lambda')}.
\end{align}
\end{lemma}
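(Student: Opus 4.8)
The plan is to obtain the identity as the ``$M\to\infty$ limit'' of the truncated dual Jacobi--Trudi formula that is already available for $S^{(r)}_{\lambda/\mu}$. First I would specialize the dual Jacobi--Trudi formula \eqref{for:9thDJT-nnsy} to the matrix $X=U_M(\bu)$ with $\bu=\{u^{(t)}_k\}$, $u^{(t)}_k=k^{-a_{t-r}}$, i.e.\ the data that (via Lemma~\ref{lemma:nnsy to fk} and the definition \eqref{def:9thFK}) realizes $\zeta^M_{\lambda/\mu}(\ba)$ as a specialization of $S^{(r)}_{\lambda/\mu}(X)$. By the ``in particular'' clause of Lemma~\ref{lemma:nnsy to fk}, namely $e^{(r+m)}_d(U_M(\bu))=e^{\FK}_d(\tau^m W)=S^{\FK}_{(1^d)}(\tau^m W)=\zeta^M_{(1^d)}(\tau^m\ba)$, each entry $e^{(r-\mu'_j+j-1)}_{\lambda'_i-\mu'_j-i+j}(U_M(\bu))$ of the matrix $E^{(r)}_{\lambda/\mu}(U_M(\bu))$ is exactly the truncated, diagonally constant, single-column value $\zeta^M_{(1^{\lambda'_i-\mu'_j-i+j})}(\tau^{-\mu'_j+j-1}\ba)$; here I would double-check that $(r-\mu'_j+j-1)-r=-\mu'_j+j-1$ produces precisely the claimed shift. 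This gives, for every $M\ge 1$ and every $\ba\in\mathbb{C}^{\mathbb{Z}}$, the purely truncated identity
\[
\zeta^M_{\lambda/\mu}(\ba)=\det\left[\zeta^M_{\lambda'_i-\mu'_j-i+j}(\tau^{-\mu'_j+j-1}\ba)\right]_{1\le i,j\le\ell(\lambda')},
\]
with the usual conventions $\zeta^M_0(\cdot)=1$ and $\zeta^M_d(\cdot)=0$ for $d<0$, and where $\zeta^M_d:=\zeta^M_{(1^d)}$.

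Next I would restrict to $\ba$ with $a_c\in\mathbb{N}$ and pass to the regularized values by comparing asymptotics as $M\to\infty$. Writing $L_M:=\log M+\gamma$, the defining property of $Z^{\ast}_{\lambda/\mu}$ from \cite[Lemma~3.1]{bc} (recalled in the Appendix) gives $\zeta^M_{\lambda/\mu}(\ba)=Z^{\ast}_{\lambda/\mu}(\ba;L_M)+O(\log^J M/M)$, and the analogous statement for ordinary regularized multiple zeta values from \cite{ikz06} gives the corresponding expansion for each of the finitely many matrix entries, with a single exponent $J$ valid for all of them. Since the Leibniz expansion of the determinant is a polynomial of bounded degree in its entries and each entry is $O(\mathrm{poly}(L_M))$, substituting these expansions shows that the determinant of the truncated matrix equals the determinant of the matrix of regularized values at $T=L_M$ up to an error $O(\log^{J'}M/M)$. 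Combining this with the expansion of the left-hand side, the two polynomials $P(T):=Z^{\ast}_{\lambda/\mu}(\ba;T)$ and $Q(T):=\det[Z^{\ast}_{\lambda'_i-\mu'_j-i+j}(\tau^{-\mu'_j+j-1}\ba;T)]$ satisfy $(P-Q)(L_M)\to 0$; as $L_M\to\infty$, a polynomial bounded along an unbounded set is constant, and the constant is forced to be $0$, so $P=Q$ identically. Setting $T=0$ then yields \eqref{for:regularized dual Jacobi Trudi}.

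An alternative, shorter route would be to invoke \cite[Theorem~3.3]{bc} directly and observe that, for a diagonally constant index $\bk=\ba|_{\lambda/\mu}$, the single-column regularized Schur multiple zeta values appearing in that general formula are themselves diagonally constant, so the formula collapses to the stated determinant; the only thing left to check is that the sub-indices read off from the relevant columns of $\lambda/\mu$ are exactly $\tau^{-\mu'_j+j-1}\ba$, a direct content computation. I expect the only real work in either approach to be bookkeeping rather than anything deep: in the first route, ensuring a uniform error exponent survives the determinant expansion and then running the elementary ``bounded polynomial on an unbounded set'' step; in the second, aligning the indexing conventions of \cite{bc} with ours. I would favour the first, self-contained route, since it uses only Theorem~\ref{thm:JT-nnsy} and Lemma~\ref{lemma:nnsy to fk}, already established above.
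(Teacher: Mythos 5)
Your proposal is correct, and your second (shorter) route is exactly what the paper does: the lemma is stated as a special case of \cite[Theorem~3.3]{bc} with no independent proof, the only content being the observation that for a diagonally constant index the column entries carry the shifts $\tau^{-\mu'_j+j-1}\ba$. Your favoured first route is genuinely different and is sound: specializing the dual Jacobi--Trudi formula \eqref{for:9thDJT-nnsy} at $X=U_M(\bu)$ and using Lemma~\ref{lemma:nnsy to fk} does give the truncated identity $\zeta^M_{\lambda/\mu}(\ba)=\det\bigl[\zeta^M_{\lambda'_i-\mu'_j-i+j}(\tau^{-\mu'_j+j-1}\ba)\bigr]$ for every $M$ (your shift check $(r-\mu'_j+j-1)-r=-\mu'_j+j-1$ is right; one should also note that $r$ may be taken $\ge\lambda'_1$ so that every $e^{(r-\mu'_j+j-1)}_{d}$ with $d=\lambda'_i-\mu'_j-i+j$ lies in the allowed range, and that the conventions $\zeta^M_0=1$, $\zeta^M_d=0$ for $d<0$ match those for $e^{(r)}_d$), and the passage to regularized values via the asymptotic characterizations from \cite{ikz06} and \cite[Lemma~3.1]{bc}, with a common error exponent for the finitely many entries and the elementary fact that a polynomial tending to $0$ along $T=\log M+\gamma\to\infty$ vanishes identically, is a legitimate argument. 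What each approach buys: the citation route is immediate but leans on the full strength of Bachmann--Charlton's general Jacobi--Trudi theorem and on translating their indexing conventions; your truncation-plus-asymptotics route is self-contained within the paper's framework (Theorem~\ref{thm:JT-nnsy}, Lemma~\ref{lemma:nnsy to fk}, and the defining property of $Z^{\ast}_{\lambda/\mu}$), at the cost of the extra bookkeeping you already identified, and it even yields the polynomial identity in $T$, from which \eqref{for:regularized dual Jacobi Trudi} follows at $T=0$.
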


\subsection{Explicit evaluations for $\zeta^{\shuffle}(\{1\}^a,2,\{1\}^c)$ and $\zeta^{\ast}(\{1\}^a,2,\{1\}^c)$}

The rest of this appendix is devoted to proving the following theorem.

\begin{theorem}
\label{thm:explicitregularizations}
For $a,c\in\mathbb{Z}_{\ge 0}$,
$\zeta^{\shuffle}(\{1\}^a,2,\{1\}^c),\zeta^{\ast}(\{1\}^a,2,\{1\}^c)\in\mathbb{Q}[\pi^2,\zeta(3),\zeta(5),\zeta(7),\ldots]$.
More explicitly, we have 
\begin{align}
\label{for:regzeta shuffle}
\zeta^{\shuffle}(\{1\}^a,2,\{1\}^c)
&=(-1)^c\binom{a+c+1}{b}\zeta(a+c+2),\\
\label{for:regzeta stuffle}
\zeta^{\ast}(\{1\}^a,2,\{1\}^c)
&=(-1)^b\sum^{c}_{s=0}\binom{a+b-s+1}{c-s}
\zeta(a+c-s+2)C_s,
\end{align}
where $\{C_s\}_{s\ge 0}$ is defined in \eqref{for:regularized 121}.
\end{theorem}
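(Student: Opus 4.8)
The plan is to treat the shuffle and stuffle cases separately, establishing \eqref{for:regzeta shuffle} first since it is both cleaner and needed as an input for \eqref{for:regzeta stuffle}. For the shuffle regularization, I would begin with the well-known iterated-integral representation: $\zeta(\{1\}^a,2,\{1\}^c)$ corresponds to the word $e_1^a e_1 e_0 e_1^c = e_1^{a+1} e_0 e_1^{c}$ in $\mathfrak{H}$, but this word is not admissible when $c>0$ (it ends in $e_1$), so one must pass through $Z^{\shuffle}$. The key observation is that the shuffle product of $z_1^{\shuffle\, c}$ with the admissible word $z_{a+2}$ (or more precisely with $e_1^{a+1}e_0$) can be computed explicitly: shuffling $c$ copies of $e_1$ into $e_1^{a+1}e_0$ produces a sum of admissible words, and extracting the coefficient of $z_1^{\shuffle\, c}$ in the expansion of $e_1^{a+1}e_0 e_1^c$ via \eqref{for:regularizations} reduces everything to convergent single zeta values of weight $a+c+2$. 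I expect the binomial coefficient $\binom{a+c+1}{b}$ to emerge from counting the number of ways the shuffle interleavings collapse; here I'd need to pin down what "$b$" means in the statement (presumably $b=c$, matching \eqref{for:regularized 121}, or possibly $b=a+1$), and verify the sign $(-1)^c$ from the standard duality/reflection behavior of iterated integrals on words of the form $e_1^{p}e_0 e_1^{q}$.

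For the stuffle case \eqref{for:regzeta stuffle}, the strategy is to exploit the comparison between the two regularizations: there is the classical relation $Z^{\ast}(w;T) = \rho\bigl(Z^{\shuffle}(w;T)\bigr)$ for a certain $\mathbb{R}$-linear "regularization transfer" map $\rho$ on $\mathbb{R}[T]$ whose generating function is $\exp\bigl(\sum_{n\ge 2}\frac{(-1)^n}{n}\zeta(n)u^n\bigr)$ (the Ihara–Kaneko–Zagier formula). Applying this at $T=0$ to the word $z_1^a z_2 z_1^c$, and using the already-established \eqref{for:regzeta shuffle} for all the lower words that appear when one re-expands $z_1^a z_2 z_1^c$ in the shuffle-regularized basis, should produce exactly the sum $\sum_{s=0}^{c}\binom{a+c-s+1}{c-s}\zeta(a+c-s+2)C_s$, with the constants $C_s$ being precisely the Taylor coefficients of that exponential generating function — which matches their definition in \eqref{for:regularized 121}. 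Concretely, I would write $z_1^a z_2 z_1^c = \sum_{i} w_i \shuffle z_1^{\shuffle\, i}$, apply $Z^{\ast}$ termwise, and observe that the contribution of $z_1^{\shuffle\, i}$ under $Z^{\ast}$ at $T=0$ is governed by the $C_s$'s; substituting \eqref{for:regzeta shuffle} for each $Z(w_i)$ and collecting terms gives the claimed formula. The membership in $\mathbb{Q}[\pi^2,\zeta(3),\zeta(5),\ldots]$ then follows because each $C_s \in \mathbb{Q}[\zeta(2),\zeta(3),\ldots]$ and $\zeta(2)=\pi^2/6$, while odd weights appear only through odd zetas (even zetas being polynomials in $\pi^2$).

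The main obstacle, I expect, is the bookkeeping in re-expanding $z_1^a z_2 z_1^c$ into the shuffle-regularized basis $\{w_i \shuffle z_1^{\shuffle\, i}\}$ and identifying the resulting combinatorial sum with the stated one: this requires an explicit handle on how the "trailing $e_1$'s" on the right interact with the single $e_0$, and then on how the transfer map $\rho$ redistributes them into the $C_s$ factors. A clean way to organize this is via generating functions — forming $\sum_{a,c\ge 0}\zeta^{\shuffle}(\{1\}^a,2,\{1\}^c)x^a z^c$ and the analogous stuffle series, recognizing the former as (essentially) a ratio of Gamma functions by \eqref{for:regzeta shuffle}, and the latter as that series multiplied by $\exp\bigl(\sum_{n\ge 2}\tfrac{(-1)^{n}}{n}\zeta(n)z^n\bigr)$; extracting coefficients then yields \eqref{for:regzeta stuffle} directly. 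This matches the generating-function expression $xF(1,2,1;x,z) = -\bigl(\psi(z-x+1)-\psi(z+1)\bigr)/\bigl(\Gamma(z+1)e^{\gamma z}\bigr)$ quoted earlier in the paper, which I would use as an independent consistency check. The sign and binomial-coefficient conventions (the ambiguous "$b$" in \eqref{for:regzeta shuffle}) are the one place where I would be especially careful to match the paper's normalization.
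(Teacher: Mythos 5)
Your route is the paper's: for \eqref{for:regzeta shuffle} you expand the non-admissible word $z_1^az_2z_1^c$ in the shuffle-regularized basis $\{w\shuffle z_1^{\shuffle\,k}\colon w \text{ admissible}\}$, evaluate with \eqref{for:regularizations} and the duality $\zeta(\{1\}^l,2)=\zeta(l+2)$, and set $T=0$ (the paper does exactly this via the explicit identities \eqref{for:z1nz2shz1m}--\eqref{for:z1nz2z1m}); for \eqref{for:regzeta stuffle} you pass through the Ihara--Kaneko--Zagier comparison of the two regularizations and a generating function in $z$, which is how the paper reaches \eqref{for:zetaast generating function 1}. Your reading of the statement is also right: the ``$b$'' is a typo for $c$ (compare \eqref{for:regularized 121}). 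One small misattribution: the sign $(-1)^c$ in \eqref{for:regzeta shuffle} does not come from duality (which contributes no sign here) but from inverting the triangular expansion \eqref{for:z1nz2shz1m}, i.e.\ from the alternating coefficients in \eqref{for:z1nz2z1m}.

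The genuine problem is the direction of the transfer map. You assert $Z^{\ast}(w;T)=\rho\bigl(Z^{\shuffle}(w;T)\bigr)$ with $\rho$ given (in generating-function form) by multiplication by $A(u)=\exp\bigl(\sum_{n\ge2}\tfrac{(-1)^n}{n}\zeta(n)u^n\bigr)$, and accordingly you propose to multiply the shuffle generating series by $A(z)$. The IKZ theorem goes the other way: $Z^{\shuffle}(w;T)=\rho\bigl(Z^{\ast}(w;T)\bigr)$, hence $Z^{\ast}=\rho^{-1}(Z^{\shuffle})$, and the stuffle generating series is the shuffle one multiplied by $A(z)^{-1}=\bigl(\Gamma(z+1)e^{\gamma z}\bigr)^{-1}$; the constants $C_s$ of \eqref{for:regularized 121} are, up to the factor $(-1)^s$, the Taylor coefficients of $A(z)^{-1}$, not of $A(z)$. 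Carried out as written, your computation produces the coefficients of $A(z)$, which already disagree in sign at $s=2$ (the correct value is $C_2=-\zeta(2)/2$), so it cannot yield \eqref{for:regzeta stuffle}; it also contradicts your own consistency check, since in $xF(1,2,1;x,z)=-\bigl(\psi(z-x+1)-\psi(z+1)\bigr)/\bigl(\Gamma(z+1)e^{\gamma z}\bigr)$ the factor $\Gamma(z+1)e^{\gamma z}=A(z)$ sits in the denominator. Replace $\rho$ by $\rho^{-1}$ (equivalently, divide by $\Gamma(z+1)e^{\gamma z}$) and your argument becomes exactly the paper's proof, with the remaining work being the binomial identity $\sum_{c\ge0}\binom{a+c+1}{c}(-z/d)^c=d^{a+2}/(d+z)^{a+2}$ summed over $d$ to produce $\frac{(-1)^a}{(a+1)!}\psi^{(a+1)}(z+1)$ and the coefficient extraction in \eqref{for:zetaast generating function 1}.
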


To prove this theorem,
we first derive the following identities involving shuffle products.

\begin{lemma}
For $a,c\in\mathbb{Z}_{\ge 0}$, we have 
\begin{align}
\label{for:z1nz2shz1m}
 z^a_1z_2\shuffle z^c_1
&=\sum^{c}_{k=0}\binom{a+c-k+1}{c-k}z^{a+c-k}_1z_2z^{k}_1,\\
\label{for:z1nz2z1m}
 z^a_1z_2z^c_1
&=\sum^{c}_{k=0}\frac{(-1)^{c-k}}{k!}\binom{a+c-k+1}{c-k}z^{a+c-k}_1z_2\shuffle z^{\shuffle \,k}_1.
\end{align}
\end{lemma}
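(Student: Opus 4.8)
The plan is to prove \eqref{for:z1nz2shz1m} first by directly evaluating the shuffle product combinatorially, and then to deduce \eqref{for:z1nz2z1m} from it by inverting the resulting unitriangular change of basis.

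For \eqref{for:z1nz2shz1m}: since $z_1 = e_1$ and $z_2 = e_1 e_0$, we have $z_1^a z_2 = e_1^{a+1} e_0$, a word all of whose letters are $e_1$ except the terminal $e_0$. In any term of the shuffle $e_1^{a+1} e_0 \shuffle e_1^c$, the letter $e_0$ must appear after all $a+1$ copies of $e_1$ coming from the first word, so the resulting word is $e_1^{a+c-k+1} e_0 e_1^k = z_1^{a+c-k} z_2 z_1^k$, where $k$ denotes the number of $e_1$'s from $z_1^c$ that are placed after $e_0$. The number of interleavings producing a fixed such word is the number of ways to choose which $c-k$ of the first $a+c-k+1$ positions are filled by letters of $z_1^c$, namely $\binom{a+c-k+1}{c-k}$, and all these interleavings yield the same word since every letter involved before $e_0$ is $e_1$. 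Summing over $k=0,\dots,c$ gives \eqref{for:z1nz2shz1m}. (Alternatively one may induct on $c$ using the recursion $u e_0 \shuffle v e_1 = (u e_0 \shuffle v) e_1 + (u \shuffle v e_1) e_0$ together with $e_1^p \shuffle e_1^q = \binom{p+q}{p} e_1^{p+q}$.)

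For \eqref{for:z1nz2z1m}: observe that $z_1^{\shuffle\, k} = k!\, z_1^k$, so the right-hand side of \eqref{for:z1nz2z1m} equals $\sum_{k=0}^{c} (-1)^{c-k} \binom{a+c-k+1}{c-k} \bigl( z_1^{a+c-k} z_2 \shuffle z_1^k \bigr)$. Substituting \eqref{for:z1nz2shz1m} with $a,c$ replaced by $a+c-k,\,k$ and collecting the coefficient of $z_1^{a+c-j} z_2 z_1^j$ reduces the claim (after the substitution $i = k-j$, $m = c-j$, $P = a+c-j+1$) to the binomial identity $\sum_{i=0}^{m} (-1)^{m-i} \binom{P-i}{m-i} \binom{P}{i} = \delta_{m,0}$ valid for all integers $m,P \ge 0$. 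This in turn follows by extracting the coefficient of $y^m$ from the identity $\sum_{i} \binom{P}{i} (-y)^i (1+y)^{P-i} = \bigl( (1+y) - y \bigr)^P = 1$.

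The main obstacle is minor: it is the bookkeeping in the shuffle evaluation \eqref{for:z1nz2shz1m} and the relabelling of parameters when feeding it back into \eqref{for:z1nz2z1m}; the only genuinely new computation is the binomial identity above, which collapses immediately via the generating function $((1+y)-y)^P = 1$. The one point requiring care is the identification $z_1^{\shuffle\, k} = k!\, z_1^k$, which turns the shuffle powers in \eqref{for:z1nz2z1m} into ordinary powers and makes the inversion explicit.
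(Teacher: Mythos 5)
Your proof is correct, and it differs from the paper's in execution though not in overall architecture. For \eqref{for:z1nz2shz1m} the paper argues by induction on $a+c$ via the recursion $z^a_1z_2\shuffle z^c_1=z_1(z^{a-1}_1z_2\shuffle z^c_1)+z_1(z^a_1z_2\shuffle z^{c-1}_1)$ (exactly the alternative you mention in passing), whereas you count interleavings of $e_1^{a+1}e_0$ with $e_1^c$ directly; your count $\binom{a+c-k+1}{c-k}$ is right, and as a sanity check the total $\sum_k\binom{a+c-k+1}{c-k}=\binom{a+c+2}{c}$ matches the number of shuffles. For \eqref{for:z1nz2z1m} the paper inducts on $c$: it isolates the $k=c$ term of \eqref{for:z1nz2shz1m}, applies the induction hypothesis to the remaining terms, and then needs the identity $\sum^{c-1}_{k=l}(-1)^{c-k}\binom{a+c-k+1}{c-k}\binom{a+c-l+1}{k-l}=-\binom{a+c-l+1}{c-l}$, which it states without proof. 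You instead invert the unitriangular relation in one shot: using $z_1^{\shuffle\,k}=k!\,z_1^k$ (which the paper also uses implicitly), you substitute \eqref{for:z1nz2shz1m} into the right-hand side of \eqref{for:z1nz2z1m} and reduce to $\sum_{i=0}^{m}(-1)^{m-i}\binom{P-i}{m-i}\binom{P}{i}=\delta_{m,0}$, proved by extracting the coefficient of $y^m$ from $\bigl((1+y)-y\bigr)^P=1$. This is essentially the same binomial identity the paper relies on (its truncated sum to $k=c-1$ equals the full alternating sum minus the $k=c$ term), so the two arguments are equivalent in content; what your route buys is that it avoids induction altogether and makes the needed identity explicit and self-contained, while the paper's inductive formulation keeps each step closer to the recursion structure of the shuffle algebra. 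No gaps.
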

\begin{proof}
Since 
$z^a_1z_2\shuffle z^c_1
=z_1(z^{a-1}_1z_2\shuffle z^c_1)+z_1(z^a_1z_2\shuffle z^{c-1}_1)$,
one easily shows \eqref{for:z1nz2shz1m} by induction on $a+c$.
We can also prove \eqref{for:z1nz2z1m} by induction on $c$.
Actually, the case $c=0$ is clear. 
Now assume that it holds for $<c$. 
Then, from \eqref{for:z1nz2shz1m}, we have  
\begin{align*}
z^{a}_1z_2z^{c}_1
&=\frac{1}{c!}z^a_1z_2\shuffle z^{\shuffle \,c}_1
-\sum^{c-1}_{k=0}\binom{a+c-k+1}{c-k}z^{a+c-k}_1z_2z^{k}_1\\
&=\frac{1}{c!}z^a_1z_2\shuffle z^{\shuffle \,c}_1-\sum^{c-1}_{k=0}\binom{a+c-k+1}{c-k}\sum^{k}_{l=0}\frac{(-1)^{k-l}}{l!}\binom{a+c-l+1}{k-l}z^{a+c-l}_1z_2\shuffle z^{\shuffle \,l}_1\\
&=\frac{1}{c!}z^a_1z_2\shuffle z^{\shuffle \,c}_1\\
&\quad 
-\sum^{c-1}_{l=0}\frac{(-1)^{c-l}}{l!}
\left\{
\sum^{c-1}_{k=l}(-1)^{c-k}
\binom{a+c-k+1}{c-k}\binom{a+c-l+1}{k-l}\right\}
z^{a+c-l}_1z_2\shuffle z^{\shuffle \,l}_1.
\end{align*}
Since the inner sum on the rightmost side equals $-\binom{a+c-l+1}{c-l}$,
the proof is complete.
\end{proof}

\begin{proof}[Proof of Theorem~\ref{thm:explicitregularizations}]
From \eqref{for:z1nz2z1m} and \eqref{for:regularizations}, we have 
\begin{align}
Z^{\shuffle}(\{1\}^a,2,\{1\}^c;T)
&=\sum^{c}_{k=0}\frac{(-1)^{c-k}}{k!}\binom{a+c-k+1}{c-k}Z(z^{a+c-k}_1z_2)T^k\nonumber \\
\label{for:Zsh}
&=\sum^{c}_{k=0}\frac{(-1)^{c-k}}{k!}\binom{a+c-k+1}{c-k}\zeta(a+c+2-k)T^k.
\end{align}
Here, we have used the duality formula 
$Z(z^{l}_1z_2)=\zeta(\{1\}^{l},2)=\zeta(l+2)$
with $l\ge 1$ for multiple zeta values.
Now \eqref{for:regzeta shuffle} is obtained by letting $T=0$. 

To prove \eqref{for:regzeta stuffle}, 
we employ the theory of regularization.
Define the $\mathbb{R}$-linear map $\rho:\mathbb{R}[T]\to\mathbb{R}[T]$ by 
$\rho(e^{Tz}):=A(z)e^{Tz}$, where 
\[
A(z):=\Gamma(z+1)e^{\gamma z}
=\exp\left(\sum^{\infty}_{l=2}\frac{(-1)^l}{l}\zeta(l)z^l\right).
\]
Note that $\rho^{-1}(e^{Tz})=A(z)^{-1}e^{Tz}$.
The fundamental theorem of the regularization of multiple zeta values \cite[Theorem~1]{ikz06} asserts that 
\[
Z^{\ast}(\bk;T)
=\rho^{-1}\left(Z^{\shuffle}(\bk;T)\right).
\]
From this together with \eqref{for:Zsh}, 
we have 
\begin{align*}
\sum^{\infty}_{c=0}Z^{\ast}(\{1\}^a,2,\{1\}^c;T)z^c
&=\sum^{\infty}_{c=0}\rho^{-1}(Z^{\shuffle}(\{1\}^a,2,\{1\}^c;T)z^c\\
&=\sum^{\infty}_{c=0}\sum^{c}_{k=0}(-1)^{c-k}\binom{a+c-k+1}{c-k}\zeta(a+c+2-k)\rho^{-1}\left(\frac{T^k}{k!}\right)z^c\\
&=\sum^{\infty}_{k=0}\sum^{\infty}_{c=0}(-1)^{c}\binom{a+c+1}{c}\sum^{\infty}_{d=1}\frac{1}{d^{a+c+2}}z^c\rho^{-1}\left(\frac{(Tz)^k}{k!}\right)\\
 &=\left(\sum^{\infty}_{d=1}\frac{1}{d^{a+2}}\sum^{\infty}_{c=0}\binom{a+c+1}{c}\left(-\frac{z}{d}\right)^c\right)\rho^{-1}(e^{Tz})\\
&=\frac{(-1)^a}{(a+1)!}\psi^{(a+1)}(z+1)A(z)^{-1}e^{Tz}.
\end{align*}
Notice that, in the last equality, 
we have used the identities 
\[
\sum^{\infty}_{d=1}\frac{1}{d^{a+2}}\sum^{\infty}_{c=0}\binom{a+c+1}{c}\left(-\frac{z}{d}\right)^c
=\sum^{\infty}_{d=0}\frac{1}{(d+z+1)^{a+2}}
=\frac{(-1)^a}{(a+1)!}\psi^{(a+1)}(z+1).
\]
This shows that 
\begin{equation}
\label{for:zetaast generating function 1}
 \sum^{\infty}_{n=0}\zeta^{\ast}(\{1\}^a,2,\{1\}^c)z^c
=\frac{(-1)^a}{(a+1)!}\psi^{(a+1)}(z+1)A(z)^{-1}.
\end{equation}
Finally, using the expansion
\[
\psi^{(a+1)}(z+1)
=\sum^{\infty}_{l=0}(-1)^{a+l}\frac{(a+l+1)!}{l!}\zeta(a+l+2)z^{l} 
\]
and 
\begin{align*}
A(z)^{-1}
&=\prod^{\infty}_{b=2}\sum^{\infty}_{k_l=0}\frac{1}{k_l!}\left(\frac{(-1)^{l+1}}{l}\zeta(l)z^l\right)^{k_l}\\
&=\sum_{k_2,k_3,\ldots\ge 0}\frac{(-1)^{3k_2+4k_3+\cdots}}{k_2!k_3!\cdots}
\left(\frac{\zeta(2)}{2}\right)^{k_2}
\left(\frac{\zeta(3)}{3}\right)^{k_3}\cdots
z^{2k_2+3k_3+\cdots}\\
&=\sum^{\infty}_{s=0}(-1)^sC_sz^s,
\end{align*}
we see that the right-hand side of \eqref{for:zetaast generating function 1} equals 
\begin{align*}
&\frac{(-1)^a}{(a+1)!}
\sum^{\infty}_{l=0}(-1)^{a+l}\frac{(a+l+1)!}{l!}\zeta(a+l+2)z^{l} 
\sum^{\infty}_{s=0}(-1)^sC_sz^s\\
&=\sum^{\infty}_{c=0}
\left\{(-1)^c\sum^{c}_{s=0}\binom{a+c-s+1}{c-s}
\zeta(a+c-s+2)C_s\right\}z^c.
\end{align*}
Comparing the coefficient of $z^c$,
we obtain \eqref{for:regzeta stuffle}.
\end{proof}

From \eqref{for:zetaast generating function 1},
one immediately obtains the following result.

\begin{theorem}
\label{thm:zast}
We have
\begin{align}
\label{for:zetaast generating function 2}
 \sum^{\infty}_{a=0}\sum^{\infty}_{c=0}
\zeta^{\ast}(\{1\}^a,2,\{1\}^c)x^{a+1}z^c
&=-\frac{\psi(z-x+1)-\psi(z+1)}{\Gamma(z+1)e^{\gamma z}}.
\end{align}
\end{theorem}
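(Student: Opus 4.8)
The plan is to read off the statement directly from the one-variable generating function identity \eqref{for:zetaast generating function 1} already established in the proof of Theorem~\ref{thm:explicitregularizations}, namely
\[
\sum^{\infty}_{c=0}\zeta^{\ast}(\{1\}^a,2,\{1\}^c)z^c
=\frac{(-1)^a}{(a+1)!}\psi^{(a+1)}(z+1)A(z)^{-1}.
\]
Multiplying both sides by $x^{a+1}$ and summing over $a\ge 0$, one gets
\[
\sum^{\infty}_{a=0}\sum^{\infty}_{c=0}\zeta^{\ast}(\{1\}^a,2,\{1\}^c)x^{a+1}z^c
=A(z)^{-1}\sum^{\infty}_{a=0}\frac{(-1)^a}{(a+1)!}\psi^{(a+1)}(z+1)\,x^{a+1},
\]
so the only remaining task is to evaluate the inner sum in closed form.

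For that, I would reindex by $m=a+1$ and recognize the sum as (minus) the non-constant part of the Taylor expansion of the digamma function. Since $\psi$ is holomorphic in a neighbourhood of $z+1$ (for $\mathrm{Re}(z)>-1$ and $x$ small), Taylor's theorem gives $\psi(z-x+1)=\sum_{m\ge 0}\frac{(-x)^m}{m!}\psi^{(m)}(z+1)$, hence
\[
\sum^{\infty}_{a=0}\frac{(-1)^a}{(a+1)!}\psi^{(a+1)}(z+1)\,x^{a+1}
=-\sum_{m\ge 1}\frac{(-x)^m}{m!}\psi^{(m)}(z+1)
=-\bigl(\psi(z-x+1)-\psi(z+1)\bigr).
\]
Substituting $A(z)^{-1}=\bigl(\Gamma(z+1)e^{\gamma z}\bigr)^{-1}$ then yields \eqref{for:zetaast generating function 2}.

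The argument is essentially a one-line consequence of \eqref{for:zetaast generating function 1}, so I do not expect a serious obstacle; the only points requiring a word of care are the interchange of the two summations (which is legitimate either as an identity of formal power series in $x$ and $z$, or analytically in a region where all series converge absolutely) and the Taylor expansion of $\psi$, whose radius of convergence in $x$ is the distance from $z+1$ to the nearest pole of $\psi$, located at the nonpositive integers. If one prefers to be self-contained, one can instead rederive \eqref{for:zetaast generating function 1} on the spot from \eqref{for:z1nz2z1m}, \eqref{for:regularizations}, the duality $Z(z_1^{l}z_2)=\zeta(l+2)$, and the regularization theorem $Z^{\ast}(\bk;T)=\rho^{-1}(Z^{\shuffle}(\bk;T))$, exactly as in the proof of Theorem~\ref{thm:explicitregularizations}; but this merely reproduces the same computation.
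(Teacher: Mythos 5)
Your proposal is correct and follows exactly the route the paper intends: the paper states that Theorem~\ref{thm:zast} follows immediately from \eqref{for:zetaast generating function 1}, and your computation—multiplying by $x^{a+1}$, summing over $a$, and recognizing the Taylor expansion $\psi(z-x+1)=\sum_{m\ge 0}\frac{(-x)^m}{m!}\psi^{(m)}(z+1)$ together with $A(z)=\Gamma(z+1)e^{\gamma z}$—is precisely the step the paper leaves implicit. No issues; your remarks on convergence/formal-series interpretation are a reasonable extra precaution.
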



\section*{Acknowledgement}
The authors express their sincere gratitude to Professor Maki Nakasuji for fruitful discussions.
This work was supported by Grant-in-Aid for Scientific Research (C) (Grant Number: JP21K03206) and Grant-in-Aid for Early-Career Scientists (Grant Number: JP22K13900).


\end{document}